\def\<{\langle}
\def\>{\rangle}
\def\k{\kappa}
\def\n{{\bf n}}
\def\Chi{\raise .3ex \hbox{\large $\chi$}} 
\def\k{\Bbbk}
\def\n{\mathrm{n}}
\def\rk{\mathrm{rk}}
\def\A{\mathrm{A}}
\def\B{\mathrm{B}}
\def\C{\mathrm{C}}
\def\D{\mathrm{D}}
\def\E{\mathrm{E}}
\def\F{\mathrm{F}}
\def\G{\mathrm{G}}
\def\[{\Bigl [}
\def\]{\Bigr ]}
\def\({\Bigl (}
\def\){\Bigr )}
\def\[{\Bigl [}
\def\]{\Bigr ]}
\def\({\Bigl (}
\def\){\Bigr )}
\newtheorem{theo}{THEOREM}[section]
\newtheorem{Lemme}{Lemma}[section]
\newtheorem{prop}{Proposition}[section]
\newtheorem{rem}{Remark}[section]
\newtheorem{cor}{Corollary}[section]
\title[On the nullcone]
{On related varieties to the commuting variety of a semisimple Lie algebra}
\author{Mouchira Zaiter}
\address{Universit\'e Paris 7 - CNRS \\
Institut de Math\'ematiques de Jussieu \\
Th\'eorie des groupes \\
Case 7012 \\ B\^atiment Chevaleret \\
75205 Paris Cedex 13, France}
\email{zaiter@math.jussieu.fr}
\begin{document}

\date\today
\maketitle


\bibliographystyle{plain}

\begin{abstract}  Let $\mathfrak{g}$ be a semisimple Lie algebra of finite dimension. The nullcone $\mathcal{N}$ of $\mathfrak{g}$ is the set of $(x,y)$ in $\mathfrak{g}\times\mathfrak{g}$ such that $x$ and $y$ are nilpotents and are in the same Borel subalgebra. The main result of this paper is that $\mathcal{N}$ is a closed and irreducible subvariety of $\mathfrak{g}\times\mathfrak{g}$, its normalization has rational singularities and its normalization morphism is bijective.
\end{abstract}

\tableofcontents

\section{Introduction}
 
The basic field $\Bbbk$ is an algebraic closed field of charecteristic zero. Let $\mathfrak{g}$ be a semisimple Lie algebra of finite dimension and let $G$ be its adjoint group. We denote by $\mathfrak{b}$ a Borel subalgebra of $\mathfrak{g}$, $\mathfrak{h}$ a Cartan subalgebra of $\mathfrak{g}$ contained in $\mathfrak{b}$, $b_\mathfrak{g}$ and $\rk\mathfrak{g}$ the dimensions of $\mathfrak{b}$ and $\mathfrak{h}$ respectively. Let $W$ be the Weyl group of $\mathfrak{g}$ with respect to $\mathfrak{h}$. The symmetric algebra of $\mathfrak{g}$ is denoted by $S(\mathfrak{g})$ and the subalgebra of its $G$-invariant elements is denoted by $S(\mathfrak{g})^G$. Let $\mathcal{B}_\mathfrak{g}$ be the set of $(x,y)$ in $\mathfrak{g}\times\mathfrak{g}$ such that $x$ and $y$ are in the same Borel subalgebra. In Section 4, we show the following theorem:
\begin{theo}
\begin{enumerate} 
 \item[(i)] The variety $\mathcal{B}_\mathfrak{g}$ is closed and irreducible of dimension $3b_\mathfrak{g}-\mathrm{rk}\mathfrak{g}$, but it isn't normal,
 \item[(ii)] the algebra of $W$-invariant regular functions on $\mathfrak{h}\times\mathfrak{h}$ is isomorphic to the algebra of $G$-invariant regular functions on $\mathcal{B}_\mathfrak{g}$.
\end{enumerate}
\end{theo}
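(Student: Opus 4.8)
The plan is to realize $\mathcal{B}_\mathfrak{g}$ as the image of a smooth incidence variety and to analyze the resulting projection. Let $\mathcal{X}=G/B$ be the flag variety and set
\begin{equation*}
\widehat{\mathcal{B}}=\{(x,y,\mathfrak{b}')\in\mathfrak{g}\times\mathfrak{g}\times\mathcal{X}\ :\ x,y\in\mathfrak{b}'\}.
\end{equation*}
First I would observe that $\widehat{\mathcal{B}}\cong G\times^{B}(\mathfrak{b}\times\mathfrak{b})$ is the total space of a $G$-homogeneous vector bundle of rank $2b_\mathfrak{g}$ over $\mathcal{X}$, hence smooth and irreducible of dimension $(b_\mathfrak{g}-\rk\mathfrak{g})+2b_\mathfrak{g}=3b_\mathfrak{g}-\rk\mathfrak{g}$. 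Since $\mathcal{X}$ is complete, the projection $\pi\colon\widehat{\mathcal{B}}\to\mathfrak{g}\times\mathfrak{g}$ is proper; its image is exactly $\mathcal{B}_\mathfrak{g}$, so $\mathcal{B}_\mathfrak{g}$ is closed and, being the image of an irreducible variety, irreducible.

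Next I would pin down the dimension by showing $\pi\colon\widehat{\mathcal{B}}\to\mathcal{B}_\mathfrak{g}$ is birational. If $x$ is regular semisimple then $\mathfrak{z}_\mathfrak{g}(x)$ is a Cartan subalgebra and the Borel subalgebras containing $x$ are precisely the $|W|$ Borels containing $\mathfrak{z}_\mathfrak{g}(x)$; for $y$ generic in one of them the nonvanishing of all its root components forces $y$ to lie in that Borel alone. Hence the fibre of $\pi$ over a generic point of $\mathcal{B}_\mathfrak{g}$ is a single point, $\pi$ is birational, and $\dim\mathcal{B}_\mathfrak{g}=3b_\mathfrak{g}-\rk\mathfrak{g}$. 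I would also record the Grothendieck--Springer residue $\theta\colon\widehat{\mathfrak{g}}\to\mathfrak{h}$, $(x,\mathfrak{b}')\mapsto$ image of $x$ in $\mathfrak{b}'/[\mathfrak{b}',\mathfrak{b}']\cong\mathfrak{h}$, which yields a $G$-invariant morphism $\Theta=(\theta,\theta)\colon\widehat{\mathcal{B}}\to\mathfrak{h}\times\mathfrak{h}$.

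For non-normality I would exhibit branching in codimension one. Fix $x$ regular semisimple with Cartan $\mathfrak{c}=\mathfrak{z}_\mathfrak{g}(x)$ and two Borels $\mathfrak{b}'_{w},\mathfrak{b}'_{w'}\supset\mathfrak{c}$ attached to adjacent Weyl chambers; their intersection has codimension one in each, so the locus of pairs $(x,y)$ with $y\in\mathfrak{b}'_{w}\cap\mathfrak{b}'_{w'}$ has dimension $3b_\mathfrak{g}-\rk\mathfrak{g}-1$. Over such a generic pair $\pi$ has exactly the two preimages $\mathfrak{b}'_{w},\mathfrak{b}'_{w'}$, and the images of the two smooth sheets of $\widehat{\mathcal{B}}$ are two distinct local branches of $\mathcal{B}_\mathfrak{g}$ (they are separated by which of the two Borel conditions the nearby $y$ satisfies). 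Thus $\mathcal{B}_\mathfrak{g}$ is not unibranch along a divisor, so it is not normal.

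Finally, for (ii) I would compute invariants through the resolution. Since $\widehat{\mathcal{B}}=G\times^{B}(\mathfrak{b}\times\mathfrak{b})$ one has $\Bbbk[\widehat{\mathcal{B}}]^{G}=\Bbbk[\mathfrak{b}\times\mathfrak{b}]^{B}$, and because every coordinate on the nilradical part has a strictly nonzero $H$-weight, the $H$-invariants---and a fortiori the $B$-invariants---reduce to functions of the two residues, giving $\Bbbk[\widehat{\mathcal{B}}]^{G}=\Bbbk[\mathfrak{h}\times\mathfrak{h}]$ via $\Theta^{*}$. Pulling back along the birational $\pi$ embeds $\Bbbk[\mathcal{B}_\mathfrak{g}]^{G}\hookrightarrow\Bbbk[\mathfrak{h}\times\mathfrak{h}]$, and a function descends to $\mathcal{B}_\mathfrak{g}$ only if it is constant on the fibres of $\pi$; on the codimension-one fibres of the previous paragraph the two residues differ by the diagonal action of a simple reflection, so constancy on all such fibres is exactly invariance under the diagonal $W$. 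This identifies the image with $\Bbbk[\mathfrak{h}\times\mathfrak{h}]^{W}$. The hard part will be the converse regularity statement: that every diagonal $W$-invariant genuinely descends to a regular $G$-invariant function on the non-normal variety $\mathcal{B}_\mathfrak{g}$, and not merely on its normalization. I expect to handle this by realizing the pure invariants as restrictions of Chevalley invariants from $\mathfrak{g}\times\mathfrak{g}$ and the mixed (polarized) invariants through the residue map $\Theta$, checking via the torus-limit degeneration sending $(x,y)$ to $(\theta(x,\mathfrak{b}'),\theta(y,\mathfrak{b}'))$ that the resulting function is constant on $G$-orbit closures and hence regular on $\mathcal{B}_\mathfrak{g}$ itself.
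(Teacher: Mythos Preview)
Your treatment of part (i) is essentially the paper's: the same incidence variety $G\times^{B}(\mathfrak{b}\times\mathfrak{b})$, properness via completeness of $G/B$, birationality on a generic locus, and the dimension count. Your non-normality argument via two local branches along a divisor is more elaborate than necessary; the paper simply observes that the proper birational map $\pi$ has the finite fibre $\pi^{-1}(x,0)$ of cardinality $|W|>1$ for $x\in\mathfrak{h}'$, which by Zariski's main theorem is impossible over a normal target. Your branch argument can be made to work, but the paper's is a one-liner.

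The real problem is in part (ii), precisely at the point you flagged as ``the hard part''. Your proposed mechanism for producing regular $G$-invariants on the non-normal $\mathcal{B}_\mathfrak{g}$ is to restrict Chevalley invariants and their $2$-polarizations from $\mathfrak{g}\times\mathfrak{g}$. This does not suffice: the subalgebra of $S(\mathfrak{h}\times\mathfrak{h})^{W}$ generated by polarizations of $S(\mathfrak{h})^{W}$ is in general \emph{strictly smaller} than $S(\mathfrak{h}\times\mathfrak{h})^{W}$ (already in type $D_{2m}$ there are diagonal $W$-invariants of odd total degree, while every polarization has even total degree). Your torus-limit argument shows only that $P\circ\Theta$ is constant on the fibres of $\pi$, hence descends set-theoretically to $\mathcal{B}_\mathfrak{g}$ and regularly to its normalization; it does not, and cannot without further input, force regularity on $\mathcal{B}_\mathfrak{g}$ itself.

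The paper closes this gap by an external result. It notes that $G$ is reductive, so $S(\mathfrak{g}\times\mathfrak{g})^{G}\to\Bbbk[\mathcal{B}_\mathfrak{g}]^{G}$ and $S(\mathfrak{g}\times\mathfrak{g})^{G}\to\Bbbk[\mathcal{C}_\mathfrak{g}]^{G}$ are surjective; it then invokes Joseph's theorem that restriction $\Bbbk[\mathcal{C}_\mathfrak{g}]^{G}\to S(\mathfrak{h}\times\mathfrak{h})^{W}$ is an isomorphism. Finally the torus-limit (your degeneration $(x,y)\to(x_{0},y_{0})$ inside $\mathfrak{b}$) shows every $G$-orbit closure in $\mathcal{B}_\mathfrak{g}$ meets $\mathcal{C}_\mathfrak{g}$, so the restriction $\Bbbk[\mathcal{B}_\mathfrak{g}]^{G}\to\Bbbk[\mathcal{C}_\mathfrak{g}]^{G}$ is injective, hence an isomorphism. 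The composite gives $\Bbbk[\mathcal{B}_\mathfrak{g}]^{G}\cong S(\mathfrak{h}\times\mathfrak{h})^{W}$. The content you are missing is precisely the surjectivity of $S(\mathfrak{g}\times\mathfrak{g})^{G}\to S(\mathfrak{h}\times\mathfrak{h})^{W}$, which is Joseph's theorem and is not a statement about polarizations.
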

The variety $\mathcal{B}_\mathfrak{g}$ contains two interesting varieties, the nullcone and the commuting variety. The nullcone of $\mathfrak{g}$, denoted by $\mathcal{N}$, is defined by the set of zeros of the $G$-invariant polynomial functions $f$ on $\mathfrak{g}\times\mathfrak{g}$ such that $f(0)=0$. By \cite{13}, it can be equivalently defined as the variety of pairs $(x,y)$ in $\mathcal{B}_\mathfrak{g}$ such that $x$ and $y$ are nilpotents. It is well-know that the nullcone plays a fundamental role in the theory of invariants and its applications. This cone was introduced and studied by D. Hilbert in his famous paper ''Ueber die vollen Invariantensystem''(\cite{25}). H. Kraft and N. R. Wallach studied in \cite{22} the geometry of the nullcone; they showed that the nullcone of any number of copies of the adjoint representation of $G$ on $\mathfrak{g}$ is irreducible and they gave a resolution of singularities of this variety. Moreover, in \cite{26}, they studied when the polarizations of a set of invariant functions defining the nullcone of a representation $V$ define the nullcone of a direct sum of several copies of $V$. This question was earlier studied by M. Losik, P. W. Michor, and V. L. Popov in \cite{29}. In \cite{27}, V. L. Popov gives a general algorithm to determine the irreducible components of maximal dimension of the nullcone using the weights of the representation and their multiplicities. In Section 5 of this paper, we show the following theorem:
\begin{theo}
\begin{enumerate}
 \item[(i)] The nullcone $\mathcal{N}$ is closed and irreducible of dimension $3(b_\mathfrak{g}-\mathrm{rk}\mathfrak{g})$, 
 \item[(ii)] the codimension of the set of its singular points is at least four and the normalization morphism of $\mathcal{N}$ is bijective.
\end{enumerate}
\end{theo}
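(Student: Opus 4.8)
The plan is to realize $\mathcal{N}$ as the image of a vector bundle over the flag variety and to exploit the resulting proper birational morphism. Let $\mathcal{F}$ be the variety of Borel subalgebras of $\mathfrak{g}$, and for $\mathfrak{b}'\in\mathcal{F}$ let $\mathfrak{n}_{\mathfrak{b}'}$ denote its nilradical. I set
\begin{equation*}
\widetilde{\mathcal{N}}=\{(\mathfrak{b}',x,y)\in\mathcal{F}\times\mathfrak{g}\times\mathfrak{g}\;:\;x,y\in\mathfrak{n}_{\mathfrak{b}'}\}\;\simeq\;G\times_{B}(\mathfrak{n}\times\mathfrak{n}),
\end{equation*}
the total space of a vector bundle of rank $2\dim\mathfrak{n}$ over $\mathcal{F}\simeq G/B$; it is smooth and irreducible of dimension $\dim\mathcal{F}+2\dim\mathfrak{n}=3(b_{\mathfrak{g}}-\rk\mathfrak{g})$. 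Let $\pi\colon\widetilde{\mathcal{N}}\to\mathfrak{g}\times\mathfrak{g}$ be the projection $(\mathfrak{b}',x,y)\mapsto(x,y)$. Since the nilpotent elements of $\mathfrak{g}$ lying in a Borel subalgebra are exactly those of its nilradical, the image of $\pi$ is precisely $\mathcal{N}$. As $\mathcal{F}$ is complete, $\pi$ is proper, so $\mathcal{N}$ is closed; being the image of an irreducible variety it is irreducible. Over the locus where $x$ is a regular nilpotent element $\pi$ is an isomorphism, a regular nilpotent element lying in a unique Borel subalgebra; hence $\pi$ is birational and $\dim\mathcal{N}=\dim\widetilde{\mathcal{N}}=3(b_{\mathfrak{g}}-\rk\mathfrak{g})$. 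This gives (i).

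For the normalization in (ii), since $\pi$ is proper and birational and $\widetilde{\mathcal{N}}$ is smooth, its Stein factorization $\widetilde{\mathcal{N}}\to Y\to\mathcal{N}$ has $Y$ normal and $Y\to\mathcal{N}$ finite and birational; thus $Y\to\mathcal{N}$ is the normalization morphism, and it is bijective if and only if every fibre of $\pi$ is connected. The fibre over $(x,y)\in\mathcal{N}$ is $\{\mathfrak{b}'\in\mathcal{F}:x,y\in\mathfrak{n}_{\mathfrak{b}'}\}$. The Lie subalgebra $\mathfrak{a}$ generated by $x$ and $y$ lies in some $\mathfrak{n}_{\mathfrak{b}'}$, hence is a nilpotent subalgebra consisting of nilpotent elements, and integrates to a connected unipotent subgroup $A\subseteq G$. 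Because a Borel subalgebra is self-normalizing and $\mathfrak{a}$ is made of nilpotent elements, the condition $\mathfrak{a}\subseteq\mathfrak{n}_{\mathfrak{b}'}$ is equivalent to $\mathfrak{b}'$ being $A$-stable, so the fibre equals the fixed-point set $\mathcal{F}^{A}$. The fixed-point set of a connected unipotent group acting on a complete variety is connected: filtering $A$ by a central series reduces this to the fixed locus of a $\mathbb{G}_a$-action on a complete connected variety, which is connected because every orbit closure is a rational curve limiting to a fixed point. Hence every fibre of $\pi$ is connected and the normalization morphism is bijective.

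It remains to bound $\mathrm{Sing}(\mathcal{N})$. By the above, $\mathcal{N}$ is smooth wherever $x$, or symmetrically $y$, is regular nilpotent, and more generally wherever $\pi$ is an isomorphism over a neighbourhood, which is the case as soon as the fibre $\mathcal{F}^{A}$ is a single reduced point. I would therefore stratify $\widetilde{\mathcal{N}}=G\times_{B}(\mathfrak{n}\times\mathfrak{n})$ by the pairs of nilpotent orbits $(\mathcal{O}_{1},\mathcal{O}_{2})$ that $(x,y)$ meets, using $\dim(\mathfrak{n}\cap\mathcal{O})=\tfrac12\dim\mathcal{O}$ to obtain
\begin{equation*}
\dim\bigl(G\times_{B}((\mathfrak{n}\cap\mathcal{O}_{1})\times(\mathfrak{n}\cap\mathcal{O}_{2}))\bigr)=\dim\mathfrak{n}+\tfrac12(\dim\mathcal{O}_{1}+\dim\mathcal{O}_{2}),
\end{equation*}
and then subtract the generic fibre dimension of $\pi$ on each stratum, namely $\dim\mathcal{F}^{A}$, estimated from $(\mathfrak{g}/\mathfrak{b}')^{\mathfrak{a}}$. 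Combining these bounds over all pairs of non-regular orbits should confine $\mathrm{Sing}(\mathcal{N})$ to codimension at least four.

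The hard part will be exactly this last estimate. The crude orbit stratification alone only yields codimension $\ge 2$, formally attained by the pair (subregular, subregular); to reach $4$ one must establish two finer facts. First, that a single-point fibre really does produce a smooth point, i.e. that $\pi$ is unramified there, so that no cusp-type singularities occur in low codimension despite the bijectivity of the normalization. Second, that the set of pairs $(x,y)$ for which $\mathcal{F}^{A}$ is positive-dimensional is genuinely small. This second point demands, for each relevant pair of nilpotent orbits, a precise analysis of when the Borel subalgebras whose nilradical contains both $x$ and $y$ form a positive-dimensional family, and it is here that the semisimple structure, through the interplay of the two nilpotent elements inside a common nilradical, must be used in earnest.
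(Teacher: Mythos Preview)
Your treatment of (i) coincides with the paper's: $\mathcal{N}$ is the image of the smooth bundle $G\times_{\mathbf B}(\mathfrak{u}\times\mathfrak{u})$ under a proper birational morphism, hence closed, irreducible, and of the stated dimension.

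For the bijectivity of the normalization your route is genuinely different and it works. The paper proves that the fibre $\mathcal{B}_{x,y}$ is connected by an explicit chain argument: using the Bruhat decomposition and induction on the length of $w$, any two Borels containing $x$ and $y$ are linked through a sequence of minimal parabolic $\mathbb{P}^1$'s, each entirely contained in $\mathcal{B}_{x,y}$. Your identification of the fibre with the fixed locus $\mathcal{F}^{A}$ of the connected unipotent group $A$ integrating the subalgebra generated by $x,y$, together with Horrocks' connectedness theorem for unipotent fixed points on complete varieties, is more conceptual; the paper's argument is more elementary and self-contained. Both then conclude via Stein factorization (equivalently, Zariski's Main Theorem applied to the lift $\vartheta_{\mathrm n}$).

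The gap is in the bound on $\mathrm{Sing}(\mathcal{N})$, and you have correctly located it but not closed it. The paper does \emph{not} proceed by your proposed orbit-pair stratification, nor does it try to show that isolated fibres are unramified. Its key device is a direct tangent-space estimate (Lemma~5.2): if the pencil $P_{x,y}=\Bbbk x+\Bbbk y$ meets the regular locus $\mathfrak{g}'$, then $(x,y)$ is a smooth point of $\mathcal{N}$. One first observes (Lemma~5.1) that for any $(v,w)\in T_{(x,y)}\mathcal{N}$ and any $t\in\Bbbk$ one has $v+tw\in T_{x+ty}\mathfrak{N}_\mathfrak{g}$, simply because $\mathcal{N}\subset\sigma^{-1}(0)$ forces all the polarized differentials $\tfrac{d}{da}p_i^{(m)}(x+av,y+aw)|_{a=0}$ to vanish. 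Taking $t=0$ and then a $t_0$ with $x+t_0y$ regular, one extracts a single $\xi\in\mathfrak{g}$ and $\omega_1,\omega_2\in\mathfrak{u}$ with $v=[\xi,x]+\omega_1$ and $w=[\xi,y]+\omega_2$, so that $T_{(x,y)}\mathcal{N}$ is contained in the image of the linear map $(\xi,\omega_1,\omega_2)\mapsto([\xi,x]+\omega_1,[\xi,y]+\omega_2)$; its kernel has dimension $b_\mathfrak{g}$, giving $\dim T_{(x,y)}\mathcal{N}\le 3(b_\mathfrak{g}-\rk\mathfrak{g})$. With this lemma the singular locus is trapped inside the image of $G\times_{\mathbf B}\bigl((\mathfrak{u}\setminus\mathfrak{u}')\times(\mathfrak{u}\setminus\mathfrak{u}')\bigr)$, and the paper finishes by a dimension count. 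Your plan never isolates the condition ``no element of the pencil is regular'', and that is exactly the idea you are missing; once you have it, the elaborate fibre-dimension analysis you outline becomes unnecessary.
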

In the last section of this paper, we use the ideas of Wim H. Hesselink (see \cite{11}) for showing the following theorem:
\begin{theo}\label{Tnrs} The normalizations of $\mathcal{N}$ and $\mathcal{B}_\mathfrak{g}$ have rational singularities.
\end{theo}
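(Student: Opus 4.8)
The plan is to realize both $\mathcal{N}$ and $\mathcal{B}_\mathfrak{g}$ as images of collapsings of homogeneous vector bundles over the flag variety $G/B$, and then to reduce the statement to a cohomology vanishing on $G/B$. Write $\mathfrak{n}=[\mathfrak{b},\mathfrak{b}]$ for the nilradical of $\mathfrak{b}$ and let $B$ be the Borel subgroup with Lie algebra $\mathfrak{b}$. Consider the smooth irreducible varieties $X_{\mathcal{B}}=G\times_B(\mathfrak{b}\times\mathfrak{b})$ and $X_{\mathcal{N}}=G\times_B(\mathfrak{n}\times\mathfrak{n})$, which are total spaces of homogeneous vector bundles over $G/B$, together with the collapsing maps $\pi\colon (g,x,y)\mapsto (g\cdot x,g\cdot y)$ into $\mathfrak{g}\times\mathfrak{g}$. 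Since $G/B$ is projective, each $\pi$ is proper with closed image, and a dimension count (matching $3b_\mathfrak{g}-\rk\mathfrak{g}$ and $3(b_\mathfrak{g}-\rk\mathfrak{g})$ from the preceding theorems) identifies these images with $\mathcal{B}_\mathfrak{g}$ and $\mathcal{N}$. First I would check that each $\pi$ is birational: a generic point of $\mathcal{N}$ (resp. $\mathcal{B}_\mathfrak{g}$) may be taken with first coordinate a regular nilpotent (resp. regular semisimple) element, and such an element lies in a unique Borel subalgebra (resp. forces the common Borel to be unique for a generic second coordinate), so the generic fibre of $\pi$ is a single point. Thus each $\pi$ is a proper birational morphism from a smooth variety.

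Next I would reduce Theorem~\ref{Tnrs} to the vanishing $R^i\pi_*\mathcal{O}_{X}=0$ for $i>0$. Since $X$ is smooth, hence normal, $\pi$ factors through the normalization $\nu\colon \overline{Y}\to Y$ (with $Y=\mathcal{N}$ or $\mathcal{B}_\mathfrak{g}$) as $\pi=\nu\circ\widetilde\pi$, where $\widetilde\pi\colon X\to\overline{Y}$ is proper and birational; because $\overline{Y}$ is normal, $\widetilde\pi_*\mathcal{O}_X=\mathcal{O}_{\overline Y}$. As $\nu$ is finite, $R^{>0}\nu_*=0$, and the Leray spectral sequence degenerates to $R^i\pi_*\mathcal{O}_X=\nu_*R^i\widetilde\pi_*\mathcal{O}_X$; the functor $\nu_*$ being faithful and exact, $R^{>0}\pi_*\mathcal{O}_X=0$ is equivalent to $R^{>0}\widetilde\pi_*\mathcal{O}_X=0$. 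The latter, together with the smoothness of $X$, is exactly the assertion that the resolution $\widetilde\pi$ of $\overline{Y}$ witnesses rational singularities. Hence it suffices to prove $R^{i}\pi_*\mathcal{O}_X=0$ for $i>0$.

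Because $\mathfrak{g}\times\mathfrak{g}$ is affine and $\pi$ is proper, $R^i\pi_*\mathcal{O}_X$ is the sheaf attached to the $\Bbbk[\mathfrak{g}\times\mathfrak{g}]$-module $H^i(X,\mathcal{O}_X)$, and since the bundle projection $p\colon X\to G/B$ is affine one has $H^i(X,\mathcal{O}_X)=\bigoplus_{d\ge0}H^i(G/B,\mathrm{Sym}^d E^\vee)$, where $E=G\times_B V$ and $V=\mathfrak{n}\times\mathfrak{n}$, resp. $\mathfrak{b}\times\mathfrak{b}$. Using the Killing form to identify $\mathfrak{n}^*\cong\mathfrak{g}/\mathfrak{b}$ as a $B$-module, and noting that for $\mathcal{B}_\mathfrak{g}$ the bundle differs only by trivial factors coming from $\mathfrak{h}\cong\mathfrak{b}/\mathfrak{n}$, everything reduces to the key vanishing
\begin{equation*}
H^i\bigl(G/B,\ \mathrm{Sym}^a(\mathfrak{g}/\mathfrak{b})\otimes\mathrm{Sym}^b(\mathfrak{g}/\mathfrak{b})\bigr)=0,\qquad i>0,\ a,b\ge0,
\end{equation*}
that is, the vanishing of the higher cohomology of all bisymmetric powers of the tangent bundle of $G/B$; the case $b=0$ is Hesselink's computation for the Springer resolution of a single copy.

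The hard part is this last vanishing. A term-by-term approach via Bott's theorem fails: filtering the $B$-module by weight lines produces line bundles $\mathcal{L}_\lambda$ with $\lambda$ a sum of positive roots, and already $\lambda=2\alpha$ for a simple root $\alpha$ in rank $\ge2$ gives a bundle with nonzero higher cohomology, so the vanishing can only come from cancellation in the associated spectral sequence. Following the ideas of \cite{11}, I would argue globally instead: realize $\mathfrak{g}/\mathfrak{b}$ as the quotient of the trivial $B$-module $\mathfrak{g}$ by $\mathfrak{b}$, use the resulting Koszul-type resolution of $\mathrm{Sym}^a(\mathfrak{g}/\mathfrak{b})\otimes\mathrm{Sym}^b(\mathfrak{g}/\mathfrak{b})$ by bundles of the form $\wedge^\bullet(G\times_B\mathfrak{b})$ twisted by trivial bundles, and exploit the Grothendieck--Springer simultaneous resolution $G\times_B\mathfrak{b}\to\mathfrak{g}$ (flat over $\mathfrak{h}$) to propagate acyclicity from the generic, regular semisimple fibre, where the situation is transparent, to the nilpotent fibre. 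Controlling this spectral sequence, equivalently establishing the required acyclicity of the homogeneous bundles $\wedge^j(G\times_B\mathfrak{b})\otimes\mathrm{Sym}^{\bullet}(\mathfrak{g}/\mathfrak{b})$, is the main obstacle, and is precisely the point at which the cohomological techniques of \cite{11} are needed.
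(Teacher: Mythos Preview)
Your overall framework is exactly the paper's: both realize $\mathcal{N}$ and $\mathcal{B}_\mathfrak{g}$ as collapsings of $G\times_\mathbf{B}(\mathfrak{u}\times\mathfrak{u})$ and $G\times_\mathbf{B}(\mathfrak{b}\times\mathfrak{b})$, factor through the normalization, and reduce rational singularities to $H^i(G/\mathbf{B},\mathcal{L}(\mathrm{S}^a(\mathfrak{u}^*)\otimes\mathrm{S}^b(\mathfrak{u}^*)))=0$ for $i>0$. Your Koszul step, resolving one symmetric factor by $\mathrm{S}(\mathfrak{g}^*)\otimes\wedge^\bullet\mathfrak{b}$, is also what the paper does. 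Where you diverge is in the last paragraph, and this is where the genuine gap lies.

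After two applications of Koszul you are left with proving
\[
H^{\,i+m+n}\bigl(G/\mathbf{B},\ \mathcal{L}(\wedge^m\mathfrak{k}\otimes\wedge^n\mathfrak{k})\bigr)=0\qquad (i>0),
\]
a \emph{two-factor} exterior-power vanishing. Hesselink's Theorem~B in \cite{11} only gives the single-factor case $\wedge^m\mathfrak{k}$; there is no spectral-sequence or Grothendieck--Springer deformation argument in \cite{11} that delivers the two-factor statement, and ``propagating acyclicity from the generic fibre'' does not obviously do so either, since the relevant bundles are not pulled back from $\mathfrak{h}$. The paper supplies exactly the missing idea (its Proposition~\ref{PC}): compare $\mathcal{L}(E_1)\otimes\mathcal{L}(E')$ on $G/\mathbf{B}$ with the external product $\mathcal{L}(E_1)\boxtimes\mathcal{L}(E')$ on $G/\mathbf{B}\times G/\mathbf{B}$ via the diagonal ideal $\mathcal{J}_\Delta$. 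K\"unneth plus single-factor Hesselink handles the external product; the $\mathcal{J}_\Delta$-term is controlled because $\mathrm{gr}\,\mathcal{J}_\Delta\cong\mathcal{L}(\mathrm{S}(\mathfrak{u}^*))$, and one more Koszul converts this back to an extra $\wedge^p\mathfrak{b}$ factor. This sets up a double induction on $(n+1-i,j)$, where $j$ is the number of exterior factors and $n=\dim G/\mathbf{B}$. Your proposal correctly isolates the obstacle but does not provide this diagonal/K\"unneth induction, which is the actual content of the proof.
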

The other intersting variety of $\mathcal{B}_\mathfrak{g}$ is the commuting variety. We recall that the commuting variety $\mathcal{C}_\mathfrak{g}$ of $\mathfrak{g}$ is the set of $(x,y)$ in $\mathfrak{g}\times\mathfrak{g}$ such that $[x,y]=0$. We used a result of A. Josef in \cite{23} on $\mathcal{C}_\mathfrak{g}$ to prove that $\k[\mathcal{B}_\mathfrak{g}]^G$ (see Notations) and $S(\mathfrak{h}\times\mathfrak{h})^W$ are isomorphic.

In this paper, we also proved some other results on $\mathcal{B}_\mathfrak{g}$. We used the homogenous generators $p_1,\ldots,p_{\mathrm{rk}\mathfrak{g}}$ of $S(\mathfrak{g})^G$, chosen so that the sequence of their degrees $d_1,\ldots, d_{\mathrm{rk}\mathfrak{g}}$ is increasing. By \cite{3}, $d_1+\ldots+d_{\rk\mathfrak{g}}=b_{\rk\mathfrak{g}}$. Now, let $X:=\sigma(\mathfrak{h}\times\mathfrak{h})$, whenever $\sigma$ is the morphism from $\mathfrak{g}\times\mathfrak{g}$ to $\k^{b_{\mathfrak{g}}+\mathrm{rk}\mathfrak{g}}$ defined by $$\sigma(x,y)=(p_1^{(0)}(x,y),\ldots,p_1^{(d_1)}(x,y),\ldots,p_{\mathrm{rk}\mathfrak{g}}^{(0)}(x,y),\ldots,p_{\mathrm{rk}\mathfrak{g}}^{(d_{\mathrm{rk}\mathfrak{g}})}(x,y)),$$ with $p_i^{(n)}$ are the $2$-order polarizations of $p_i$ of bidegree $(d_i-n,n)$ (see Notations). It is shown in Section 3 that there is an isomorphism between $(\mathfrak{h}\times\mathfrak{h})/W$ and the normalization of $X$, the normalization morphism of $X$ is bijective, and the codimension of the complement of the set of regular points of $X$ is at least two. On the other hand, the variety $\sigma(\mathcal{B}_\mathfrak{g})$ is equal to $X$ and $\mathcal{B}_\mathfrak{g}$ is an irreducible component of $\sigma^{-1}(X)$.

Finally, in the appendix, we give some results on the positive roots related to Theorem \ref{Tnrs}.

\section{Notations}

We consider the diagonal action of $G$ on $\mathfrak{g}\times\mathfrak{g}$ and the diagonal action of $W$ on $\mathfrak{h}\times\mathfrak{h}$. Let $\mathfrak{u}$ be the set of the nilpotent elements in $\mathfrak{b}$, let $\mathcal{R}$ be the root system of $\mathfrak{h}$ in $\mathfrak{g}$, let $\mathcal{R}_+$ be the positive root system defined by $\mathfrak{b}$ and let $\Pi$ be the set of simple roots of $\mathcal{R}_+$. We denote by $\mathbf{B}$ the normalizer of $\mathfrak{b}$ in $G$, $\mathbf{U}$ its unipotent radical, $\mathbf{H}$ and $N_G(\mathfrak{h})$ the centralizer and the normalizer of $\mathfrak{h}$ in $G$. We use the following notations:
\begin{itemize}
\item if $G\times A\longrightarrow A$ is an action of $G$ on the algebra $A$, denote by $A^G$ the subalgebra of the $G$-invariant elements of $A$,
\item for $i=1,\ldots,\rk\mathfrak{g}$, the 2-order polarizations of $p_i$ of bidegree $(d_i-n,n)$ denoted by $p_i^{(n)}$ are the unique elements in $(S(\mathfrak{g})\otimes_\C S(\mathfrak{g}))^G$ satisfying the following relation $$p_i(ax+by)=\sum\limits_{n=0}^{d_i}a^{d_i-n}b^n p_i^{(n)}(x,y),$$ for all $a,b\in\mathbb{C}$ and $(x,y)\in \mathfrak{g}\times\mathfrak{g}$,
\item for $i=1,\ldots,\rk\mathfrak{g}$, $\varepsilon_i$ is the element of $S(\mathfrak{g})\otimes_\mathbb{C}\mathfrak{g}$ defined by $$\<\varepsilon_i(x),v\>=p_i'(x)(v),\mbox{ }\forall x,v\in\mathfrak{g},$$ whenever $p_i'(x)$ is the differential of $p_i$ at $x$ for $i=1,\ldots\rk\mathfrak{g}$, 
\item for $i=1,\ldots,\rk\mathfrak{g}$, the $2$-polarizations of $\varepsilon_i$ of bidegree $(d_i-m-1,m)$ denoted by $\varepsilon_i^{(m)}$ are the unique elements in $S(\mathfrak{g})\otimes_\mathbb{C}S(\mathfrak{g})\otimes_\mathbb{C}\mathfrak{g}$ satisfying the following relation $$\varepsilon_i(ax+by)=\sum\limits_{n=0}^{d_i-1}a^{d_i-n-1}b^n \varepsilon_i^{(n)}(x,y),$$ for all $a,b\in\mathbb{C}$ and $(x,y)\in \mathfrak{g}\times\mathfrak{g}$,
\item $\mathfrak{g}'$ is the set of regular elements of $\mathfrak{g}$,
\item $\mathfrak{h}'$ is the subset of regular elements of $\mathfrak{g}$ belonging to $\mathfrak{h}$, 
\item for $x$, $y$ in $\mathfrak{g}$, $P_{x,y}$ is the subspace of $\mathfrak{g}$ generated by $x$ and $y$,
 \item $\pi$ is the morphism from $\mathfrak{h}$ to $\k^{\mathrm{rk}\mathfrak{g}}$ defined  by $\pi(x):= (p_1(x),\ldots,p_{\mathrm{rk}\mathfrak{g}}(x))$,
\item for $X$ algebraic variety and for $x\in X$, $\mathcal{O}_{x,X}$ is the local ring of $X$ at $x$, $\mathfrak{M}_x$ is its maximal ideal and $\hat{\mathcal{O}}_{x,X}$ is the completion of $\mathcal{O}_{x,X}$ for the $\mathfrak{M}_x$-adic topology,
\item $\<.,.\>$ is the Killing form of $\mathfrak{g}$,
\item for $x\in\mathfrak{b}$, $x=x_0+x_+$ with $x_0\in\mathfrak{h}$ and $x_+\in\mathfrak{u}$,
\item $h$ is the element of $\mathfrak{h}$ such that $\beta(h)=1$, for all $\beta$ in $\Pi$ and $h(t)$ is the one parameter subgroup of $G$ generated by $\mathrm{ad}h$. Then for all $x$ in $\mathfrak{b}$, $$\lim\limits_{t\to0}h(t)(x)=x_0.$$
\end{itemize}

{\bf Acknowledgements}. I would like to thank Jean-Yves Charbonnel for his advice and help and for his rigorous attention to this work.

\section{On the variety $\sigma(\mathfrak{h}\times\mathfrak{h})$}

Denote by $(x,y)\longmapsto\overline{(x,y)}$ the canonical map from $\mathfrak{h}\times\mathfrak{h}$ to $(\mathfrak{h}\times\mathfrak{h})/W$.

\begin{prop} The set $X:=\sigma(\mathfrak{h}\times\mathfrak{h})$ is closed in $\Bbbk^{b_\mathfrak{g}+\rk\mathfrak{g}}$.
\end{prop}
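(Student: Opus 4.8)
The plan is to show that the restriction $\sigma|_{\mathfrak{h}\times\mathfrak{h}}$ is a \emph{finite} morphism onto its image, from which closedness is immediate. The key observation is that finiteness already comes from the two ``extreme'' components of $\sigma$, so the mixed polarizations $p_i^{(n)}$ with $0<n<d_i$ play no role at this stage.

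First I would isolate the extreme components. Setting $b=0$ (resp.\ $a=0$) in the defining relation $p_i(ax+by)=\sum_{n=0}^{d_i}a^{d_i-n}b^n p_i^{(n)}(x,y)$ gives $p_i^{(0)}(x,y)=p_i(x)$ and $p_i^{(d_i)}(x,y)=p_i(y)$. Hence, letting $\mathrm{pr}\colon\k^{b_\mathfrak{g}+\rk\mathfrak{g}}\to\k^{2\rk\mathfrak{g}}$ be the linear projection onto the coordinates indexed by the pairs $(i,0)$ and $(i,d_i)$, one has $\mathrm{pr}\circ\sigma=\pi\times\pi$ on $\mathfrak{h}\times\mathfrak{h}$, where $\pi$ is the map of the Notations. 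Next I would record that $\pi\colon\mathfrak{h}\to\k^{\rk\mathfrak{g}}$ is finite: by the Chevalley restriction theorem the functions $p_1|_\mathfrak{h},\dots,p_{\rk\mathfrak{g}}|_\mathfrak{h}$ generate $\k[\mathfrak{h}]^W$, so $\pi^*$ identifies the polynomial algebra $\k[t_1,\dots,t_{\rk\mathfrak{g}}]$ with the subalgebra $\k[\mathfrak{h}]^W\subseteq\k[\mathfrak{h}]$, and since $W$ is finite, $\k[\mathfrak{h}]$ is a finitely generated module over $\k[\mathfrak{h}]^W$. Consequently $\pi\times\pi$ is finite, i.e.\ $\k[\mathfrak{h}\times\mathfrak{h}]=\k[\mathfrak{h}]\otimes_\k\k[\mathfrak{h}]$ is a finite module over $\k[\mathfrak{h}]^W\otimes_\k\k[\mathfrak{h}]^W$.

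Finally I would transfer this to $\sigma$. Writing $A:=\k[\mathfrak{h}\times\mathfrak{h}]$ and $B:=\sigma^*\big(\k[\k^{b_\mathfrak{g}+\rk\mathfrak{g}}]\big)\subseteq A$ for the subalgebra generated by all the $p_i^{(n)}|_{\mathfrak{h}\times\mathfrak{h}}$, the factorization $\mathrm{pr}\circ\sigma=\pi\times\pi$ shows that $(\pi\times\pi)^*\big(\k[\k^{2\rk\mathfrak{g}}]\big)=\k[\mathfrak{h}]^W\otimes_\k\k[\mathfrak{h}]^W$ is contained in $B$. Since $A$ is already module-finite over this smaller subalgebra, it is a fortiori module-finite, hence integral, over $B$. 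Therefore $\sigma|_{\mathfrak{h}\times\mathfrak{h}}$ is finite; equivalently, with $I:=\ker\sigma^*$, the inclusion $B=\k[\k^{b_\mathfrak{g}+\rk\mathfrak{g}}]/I\hookrightarrow A$ is integral, so the induced map $\mathrm{Spec}\,A\to\mathrm{Spec}\,B$ is surjective and $X=\sigma(\mathfrak{h}\times\mathfrak{h})$ equals the closed set $V(I)\subseteq\k^{b_\mathfrak{g}+\rk\mathfrak{g}}$.

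I would flag that the only genuinely necessary input is the finiteness of the Chevalley-type map $\pi$; the mixed polarizations merely enlarge $B$ and cannot spoil integrality. The one step to state with care is the passage from integrality of $B\hookrightarrow A$ to closedness of the image, which uses the lying-over property (surjectivity of $\mathrm{Spec}\,A\to\mathrm{Spec}\,B$): a morphism that is only dominant onto $\mathrm{Spec}\,B$ need not have closed image, so finiteness, not mere dominance, is what the argument requires.
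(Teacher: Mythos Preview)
Your argument is correct and is a genuinely different, more streamlined route than the paper's. Both proofs rest on the finiteness of the Chevalley map $\pi\colon\mathfrak{h}\to\k^{\rk\mathfrak{g}}$, but you exploit it in the most economical way: using only the two extreme polarizations $p_i^{(0)}=p_i(x)$ and $p_i^{(d_i)}=p_i(y)$, you factor $\pi\times\pi$ through $\sigma$, whence $\k[\mathfrak{h}\times\mathfrak{h}]$ is module-finite over the subalgebra $B$ generated by all the $p_i^{(n)}|_{\mathfrak{h}\times\mathfrak{h}}$, and $X=V(\ker\sigma^*)$ is closed by lying-over. The paper instead picks $m+1$ pairwise distinct scalars $t_1,\ldots,t_{m+1}$ (with $m=\max d_i$), builds a linear ``Vandermonde'' isomorphism $\beta$ from $\k^{b_\mathfrak{g}+\rk\mathfrak{g}}$ onto a subspace of $\k^{(m+1)\rk\mathfrak{g}}$, and shows that $\beta\circ\sigma(x,y)=(\pi(x+t_1y),\ldots,\pi(x+t_{m+1}y))$; closedness then follows because $\pi^{m+1}$ is finite and $\beta$ is a closed embedding. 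Your two evaluation points replace the paper's $m+1$, avoiding the Vandermonde bookkeeping entirely. The trade-off is that the paper's more elaborate setup is reused verbatim in the proof of Proposition~3.3 (smoothness of $X$ at points of $\sigma(\Gamma)$), where one genuinely needs the freedom to choose the $t_j$ so that each $x+t_jy$ is regular; your shortcut does not give that for free.
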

\begin{proof} For $m:=\sup\{d_i,i\in\{i,\ldots,\mathrm{rk}\mathfrak{g}\}\}$, let $(t_i)_{i=1,\ldots,m+1}$ be in $\Bbbk$, pairwise different, let $\alpha$ be the morphism from $\mathfrak{h}\times\mathfrak{h}$ to $\mathfrak{h}^{m+1}$ defined by $$\alpha(x,y):=(x+t_1y,\ldots,x+t_{m+1}y)$$ and let $\gamma$ be the morphism from $\mathfrak{h}^{m+1}$ to $\Bbbk^{(m+1)\mathrm{rk}\mathfrak{g}}$ defined by $$\gamma(x_1,\ldots,x_{m+1}):=(\pi(x_1),\ldots,\pi(x_{m+1})).$$ Let $\beta$ be the morphism from $\Bbbk^{b_\mathfrak{g}+\mathrm{rk}\mathfrak{g}}$ to $\Bbbk^{(m+1)\mathrm{rk}\mathfrak{g}}$ defined by  $$\beta({(z_i^{(j)})}_{ 1\leq i\leq \mathrm{rk}\mathfrak{g}, 0\leq j\leq d_i}):= \left( \sum\limits_{j=0}^{d_1} t_1^j z_1^{(j)},\sum\limits_{j=0}^{d_2} t_1^j z_2^{(j)},\ldots , \sum\limits_{j=0}^{d_{\mathrm{rk}\mathfrak{g}}}t_{m+1}^jz_{\mathrm{rk}\mathfrak{g}}^{(j)}\right).$$
Since $(t_i)_{i=1,\ldots,m+1}$ are pairwise different then $\beta$ is an isomorphism from $\Bbbk^{b_\mathfrak{g}+\mathrm{rk}\mathfrak{g}}$ onto $\beta (\Bbbk^{b_\mathfrak{g}+\mathrm{rk}\mathfrak{g}})$. Since $\pi$ is a finite morphism, $\gamma$ is too. Hence $\gamma\circ\alpha(\mathfrak{h}\times\mathfrak{h})$ is closed in $\Bbbk^{(m+1)\mathrm{rk}\mathfrak{g}}$. Furthermore, $\gamma\circ\alpha(\mathfrak{h}\times\mathfrak{h})$ is contained in the image of $\beta$. Then $\sigma(\mathfrak{h}\times\mathfrak{h})$ is closed since $\beta\circ\sigma(\mathfrak{h}\times\mathfrak{h})=\gamma\circ\alpha(\mathfrak{h}\times\mathfrak{h})$
\end{proof}

\begin{prop}\label{P}: Let $(X_\n,\mu)$ be the normalization of $X=\sigma(\mathfrak{h}\times\mathfrak{h})$.
\begin{itemize}
\item[(i)] There exists a bijection $\bar{\sigma}$ from $\mathfrak{h}\times\mathfrak{h}\slash W$ to $X$ such that $\bar{\sigma}(\overline{(x,y)})=\sigma(x,y)$ for all $(x,y)$ in $\mathfrak{h}\times \mathfrak{h}$.
\item[(ii)] There exists a unique morphism $\sigma_\n$ from $(\mathfrak{h}\times\mathfrak{h})\slash W$ to $X_\n$ such that\linebreak $\mu\circ\sigma_\mathrm{n}=\bar{\sigma}$.
\item[(iii)] The morphism $\sigma_\n$ is an isomorphism.
\item[(iv)] The morphism $\mu$ is bijective.
\end{itemize}
\end{prop}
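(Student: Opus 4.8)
The plan is to prove (i) by a direct separation-of-orbits argument, and then to obtain (ii)--(iv) almost formally, the only geometric inputs being that $\bar\sigma$ is finite and birational and that its source is normal.

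First I would check that $\bar\sigma$ is well defined and reduce (i) to a statement about $W$-orbits. Each polarization $p_i^{(n)}$ lies in $(S(\mathfrak g)\otimes_{\mathbb C}S(\mathfrak g))^G$, so its restriction to $\mathfrak h\times\mathfrak h$ is invariant under $N_G(\mathfrak h)$ and hence under $W$; thus $\sigma$ is constant on $W$-orbits and factors through the quotient as a surjection $\bar\sigma$ onto $X$. The content is injectivity. Since $\sigma(x,y)$ is by construction the collection of coefficients of the polynomials $(a,b)\mapsto p_i(ax+by)$, the equality $\sigma(x,y)=\sigma(x',y')$ is equivalent to $p_i(ax+by)=p_i(ax'+by')$ for all $i$ and all $(a,b)\in\Bbbk^2$. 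By Chevalley's theorem the $p_i|_{\mathfrak h}$ generate $S(\mathfrak h)^W$ and $\pi$ realizes the quotient $\mathfrak h\to\mathfrak h/W$, so this says precisely that $ax+by$ and $ax'+by'$ are $W$-conjugate for every $(a,b)$.

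The key step is to promote this pointwise conjugacy to a single $w$. For $w\in W$ put $L_w:=\{(a,b)\in\Bbbk^2 : a(wx-x')+b(wy-y')=0\}$, a linear subspace of $\Bbbk^2$. The previous reduction gives $\bigcup_{w\in W}L_w=\Bbbk^2$, and since $W$ is finite while $\Bbbk$ is infinite, $\Bbbk^2$ cannot be the union of finitely many proper subspaces; hence $L_w=\Bbbk^2$ for some $w$, i.e. $x'=wx$ and $y'=wy$, so $\overline{(x,y)}=\overline{(x',y')}$. This gives (i). I expect this covering argument to be the main obstacle: a naive approach that chooses $w$ only on the locus where $ax+by$ is regular breaks down precisely when $x$ and $y$ lie in a common wall, and the line-arrangement trick is what sidesteps this.

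For the remaining parts I would argue as follows. The variety $(\mathfrak h\times\mathfrak h)/W$ is normal, being the quotient of the smooth $\mathfrak h\times\mathfrak h$ by the finite group $W$, and $\bar\sigma$ is dominant; the universal property of the normalization then yields a unique $\sigma_{\mathrm n}$ with $\mu\circ\sigma_{\mathrm n}=\bar\sigma$, which is (ii). To get (iii) I would check that $\bar\sigma$ is finite and birational. Finiteness follows from the proof of Proposition 3.1: there $\beta\circ\sigma=\gamma\circ\alpha$ on $\mathfrak h\times\mathfrak h$ with $\beta$ an isomorphism onto its image and $\gamma\circ\alpha$ finite (as $\pi$ is finite and $\alpha$ is a closed immersion), whence $\sigma\colon\mathfrak h\times\mathfrak h\to X$, and therefore $\bar\sigma$, is finite; birationality follows because in characteristic zero the bijectivity from (i) forces the extension $\Bbbk(X)\subseteq\Bbbk((\mathfrak h\times\mathfrak h)/W)$ to have degree one. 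Thus $S(\mathfrak h\times\mathfrak h)^W$ is a normal domain, integral over $\Bbbk[X]$ and with the same fraction field, i.e. it is the integral closure of $\Bbbk[X]$; by uniqueness of the normalization $\sigma_{\mathrm n}$ is an isomorphism, which is (iii). Finally $\mu=\bar\sigma\circ\sigma_{\mathrm n}^{-1}$ is a bijection composed with an isomorphism, hence bijective, giving (iv).
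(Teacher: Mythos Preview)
Your argument is correct. Parts (i), (ii) and (iv) coincide with the paper's: for (i) the paper runs the same pigeonhole, phrased along the affine line $t\mapsto x+ty$ (find $t_1\neq t_2$ with the same $w$), which is the one-parameter slice of your covering $\Bbbk^2=\bigcup_w L_w$; (ii) and (iv) are identical. For (iii) the routes diverge. The paper constructs an explicit inverse to the comorphism $\Psi\colon\Bbbk[X_\n]\to S(\mathfrak h\times\mathfrak h)^W$: given $Q$, it sets $P(\sigma_\n(\overline{(x,y)}))=Q(x,y)$, checks this is well defined using (i), pushes the graph of $Q$ forward by the finite map $\sigma_\n\times 1_\Bbbk$ to obtain the closed graph of $P$, and then applies Zariski's Main Theorem to the bijective projection $\tilde\Gamma\to X_\n$ (using normality of $X_\n$) to conclude that $P$ is regular. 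You instead read off from Proposition~3.1 that $\bar\sigma$ is finite, observe that a bijective finite morphism in characteristic zero is birational, and conclude that $S(\mathfrak h\times\mathfrak h)^W$ is a normal domain, integral over $\Bbbk[X]$ with the same fraction field, hence equal to $\Bbbk[X_\n]$. Your approach is shorter and invokes only the abstract characterization of the normalization; the paper's graph-and-ZMT argument is more hands-on and has the side benefit of exhibiting the inverse map directly, a device the paper reuses later (e.g.\ in Proposition~\ref{PSIBn}).
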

\begin{proof} (i) Since $\sigma(w(x,y))=\sigma(x,y)$, for all $w$ in $W$ and $(x,y)$ in $\mathfrak{h}\times\mathfrak{h}$, $\overline{\sigma}$ is well defined and it is surjective. Let $(x,y)$, $(x',y')$ be in $\mathfrak{h}\times\mathfrak{h}$ such that $\bar{\sigma}(\overline{(x,y)})=\bar{\sigma}(\overline{(x',y')})$. Then:\\ $$\bar{\sigma}(\overline{(x,y)})=\bar{\sigma}(\overline{(x',y')})\Rightarrow \sigma(x,y)=\sigma(x',y')\atop\Rightarrow p_i(x+ty)=p_i(x'+ty'),\mbox{ } \forall t\in \Bbbk\mbox{, } \forall i\in \{1,\ldots,rk\mathfrak{g}\}.$$ \\ Since $W$ is finite then there exist $t_1$, $t_2$ two different elements of $\Bbbk$ and $w$ in $W$ such that we have the following system:
$$
\left \{
\begin{array}{l}
   x'+t_1y'=w(x)+t_1w(y)\\
   x'+t_2y'=w(x)+t_2w(y)
\end{array}    
\right .$$
then $ x'=w(x) \mbox{ et } y'=w(y)$, hence $ \overline{(x',y')}=\overline{(x,y)},$ then $\bar{\sigma}$ is injective and therefore it is bijective.\\

(ii) The variety $(\mathfrak{h}\times\mathfrak{h})\slash W$ is a quotient of a smooth variety by a finite group, so it is normal. Since $\bar\sigma$ is bijective, it is a dominant morphism. Since $(\mathfrak{h}\times\mathfrak{h})\slash W$ is normal and since $\bar\sigma$ is a dominant morphism, there exists a unique morphism $\sigma_\n$ from $(\mathfrak{h}\times\mathfrak{h})\slash W$ into $X_\n$ such that $\mu\circ\sigma_\n=\bar\sigma$ (\cite{10} [ch. II, Ex. 3.8]). Then there is a commutative diagram:
$$ \xymatrix{
    \mathfrak{h}\times \mathfrak{h}\slash W  \ar[r]^{\sigma_\n} \ar[rd]_{\bar\sigma} & X_\n\ar[d]^{\mu}\\
     & X .
       }
  $$

(iii) Let $\Psi$ be the comorphism of $\sigma_\mathrm{n}$ from $\Bbbk[X_\mathrm{n}]$ into $S(\mathfrak{h}\times\mathfrak{h})^W$.\\
Show that $\Psi$ admits an inverse. Let $\Phi$ be the map from $S(\mathfrak{h}\times\mathfrak{h})^W$ to $\Bbbk[X_\mathrm{n}]$ defined by $\Phi(\mathrm{Q})=P$, with $\mathrm{P}(z) =\mathrm{Q}(x,y)$,
whenever $(x,y)$ in $\mathfrak{h}\times \mathfrak{h}$ and such that $\sigma_\mathrm{n}(x,y)=z$.
The map $\Phi$ is well defined. In fact, let $(x,y)$ and $(x',y')$ be in $\mathfrak{h}\times \mathfrak{h}$ such that $\sigma_\mathrm{n}(x,y)=\sigma_\n(x',y')$. We have to prove:$$\mathrm{Q}(x,y)=\mathrm{Q}(x',y')$$
We have:
$$\sigma_\n(x,y)=\sigma_\n(x',y')\Rightarrow \mu\circ\sigma_\n(x,y)=\mu\circ\sigma_\n(x',y')\Rightarrow\sigma(x,y)=\sigma(x',y').$$
It has been proved that there exists $w$ in $W$ such that $x'=w(x)$ and $y'=w(y)$ then $$ \mathrm{Q}(x',y')= \mathrm{Q}\left( w(x),w(y)\right)= \mathrm{Q}(x,y).$$ It is clear that the map $\Phi$ is an algebra homomorphism.
We now prove that $\mathrm{P}$ belongs to $\Bbbk[X_\mathrm{n}]$. Let $\Gamma$ be the graph of $\mathrm{Q}$ and let $\tilde{\Gamma}$ be the image of $\Gamma$ under the $\sigma_\mathrm{n} \times 1_{\mathbb{K}}$. Then $\tilde{\Gamma}$ is the graph of $\mathrm{P}$ and it is closed since $\sigma_\mathrm{n}\times 1_\Bbbk$ is a finite morphism and $\Gamma$ is closed. Let $\chi$ be the projection of $\tilde{\Gamma}$ onto $X_\mathrm{n}$.
Then $\chi$ is a bijection. Then by Zariski's main theorem (\cite{21}) $\chi$ is an isomorphism since $X_\mathrm{n}$ is normal. 
Hence $\mathrm{P}$ is a regular fonction on $X_\mathrm{n}$.\\
Let us show that $\Phi=\Psi^{-1}$. Let $\mathrm{P}$ be in $\Bbbk[X_\mathrm{n}]$ and let  $\mathrm{Q}$ be in $S(\mathfrak{h}\times\mathfrak{h})^W$. We have: $$\Phi\circ\Psi(\mathrm{P})=\Phi(P\circ\sigma_\mathrm{n})=\mathrm{P}\mbox{ and } \Psi\circ\Phi(\mathrm{Q})=\Phi(\mathrm{Q})\circ\sigma_\mathrm{n}.$$
Now calculate $(\Phi(\mathrm{Q})\circ\sigma_\mathrm{n})(x,y)$, for $(x,y)$ in $\mathfrak{h}\times\mathfrak{h}$,$$(\Phi(\mathrm{Q})\circ\sigma_\mathrm{n})(x,y)=\Phi(\mathrm{Q})(\sigma_\n(x,y))=\mathrm{Q}(x,y),$$ then $\Psi\circ\Phi(\mathrm{Q})=\mathrm{Q}$. 
Hence $\Psi$ is an isomorphism from $\Bbbk[X_\mathrm{n}]$ to $S(\mathfrak{h}\times\mathfrak{h})^W$, whence the assertion.\\

(iv) Since $\mu\circ\sigma_\n=\bar\sigma$, since $\sigma_\n$ is an isomorphism and since $\bar\sigma$ is bijective, $\mu$ is bijective.
\end{proof}

\begin{rem} By \cite{20}, $\bar\sigma$ is an isomorphism if and only if the algebra $S(\mathfrak{h}\times\mathfrak{h})^W$ is generated by the $2$-polarizations of elements of $S(\mathfrak{h})^W$, i.e. for $\mathfrak{g}$ of type $A_n$, $B_n$ and $C_n$. For $\mathfrak{g}$ of type $D_n$, some generators of $S(\mathfrak{h}\times\mathfrak{h})^W$ are given, and not all are polarizations. In particular for $n$ even, all the polarizations of elements of $S(\mathfrak{h})^W$ have even total degree and $S(\mathfrak{h}\times \mathfrak{h})^W$ contains elements of odd total degree. So $\Bbbk[X_\n]$ strictly contains $\Bbbk[X]$.
\end{rem}

\begin{prop} : Let $\Gamma:=\{(x,y)\in\mathfrak{h}\times\mathfrak{h}|P_{x,y}\cap\mathfrak{h'}\neq \varnothing \}$, we have:
\begin{itemize}
\item[(i)] $\mathrm{codim}_X\sigma(\Gamma)^c\geq 2$,
\item[(iii)] for all $z$ in $\sigma(\Gamma)$, $z$ is a regular point  of $X$.
\end{itemize}
\end{prop}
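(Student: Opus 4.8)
The plan is to treat the two assertions separately: the codimension estimate (i) follows from an explicit description of $\Gamma^c$ together with a dimension count, while the regularity statement (iii) reduces to a differential computation that I then combine with the normalization data of Proposition \ref{P}.

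First I would describe $\Gamma^c$ concretely. For $(x,y)$ in $\mathfrak{h}\times\mathfrak{h}$, an element $ax+by$ of $P_{x,y}$ is regular precisely when $a\alpha(x)+b\alpha(y)\neq 0$ for every $\alpha$ in $\mathcal{R}$. Hence $(x,y)$ lies in $\Gamma$ exactly when the finitely many lines $\{(a,b)\mid a\alpha(x)+b\alpha(y)=0\}$, $\alpha\in\mathcal{R}$, fail to cover $\Bbbk^{2}$; since $\Bbbk$ is infinite, a finite union of proper lines through the origin never covers $\Bbbk^{2}$, so covering forces one of these sets to be all of $\Bbbk^{2}$, i.e. some root $\alpha$ with $\alpha(x)=\alpha(y)=0$. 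Therefore
$$\Gamma^c=\bigcup_{\alpha\in\mathcal{R}}\bigl(\ker\alpha\times\ker\alpha\bigr),$$
a finite union of subspaces of dimension $2(\rk\mathfrak{g}-1)$, so $\dim\Gamma^c=2\rk\mathfrak{g}-2$. Since $\Gamma$ is $W$-stable and $\bar\sigma$ is bijective (Proposition \ref{P}(i)), one has $\sigma(\Gamma)^c=\sigma(\Gamma^c)$; as $\bar\sigma$ is finite it preserves dimension, and $\dim X=2\rk\mathfrak{g}$, whence $\mathrm{codim}_X\sigma(\Gamma)^c=2$, proving (i).

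For (iii) fix $z=\sigma(x,y)$ with $(x,y)\in\Gamma$ and choose $z_0=a_0x+b_0y\in\mathfrak{h}'$ regular. I would first note that $(x,y)$ has trivial stabilizer in $W$: any $w$ fixing $(x,y)$ fixes $z_0$, and a regular element of $\mathfrak{h}$ has trivial $W$-stabilizer. Consequently $W$ acts freely near $(x,y)$, the quotient map $\rho\colon\mathfrak{h}\times\mathfrak{h}\to(\mathfrak{h}\times\mathfrak{h})/W$ is étale there, and the target is smooth at $\overline{(x,y)}$. The key point is then the injectivity of $d\sigma_{(x,y)}$ on $\mathfrak{h}\times\mathfrak{h}$. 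Differentiating the polarization identity $p_i(ax+by)=\sum_{n=0}^{d_i}a^{d_i-n}b^{n}p_i^{(n)}(x,y)$ in a direction $(u,v)$ shows that $(u,v)\in\ker d\sigma_{(x,y)}$ if and only if $\<\varepsilon_i(ax+by),\,au+bv\>=0$ for all $(a,b)\in\Bbbk^2$ and all $i$. For $(a,b)$ in the nonempty Zariski-open cone $\{(a,b)\mid ax+by\in\mathfrak{h}'\}$ the element $ax+by$ is regular semisimple, so the gradients $\varepsilon_1(ax+by),\dots,\varepsilon_{\rk\mathfrak{g}}(ax+by)$ lie in and span its centralizer $\mathfrak{h}$; since the Killing form is nondegenerate on $\mathfrak{h}$ and $au+bv\in\mathfrak{h}$, this forces $au+bv=0$. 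As the complement of finitely many lines in $\Bbbk^2$ contains two linearly independent vectors $(a_1,b_1),(a_2,b_2)$, the relations $a_1u+b_1v=a_2u+b_2v=0$ give $u=v=0$, so $d\sigma_{(x,y)}$ is injective.

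Finally I would deduce that $z$ is regular. Since $\rho$ is étale at $(x,y)$ and $\sigma=\bar\sigma\circ\rho=\mu\circ\sigma_\n\circ\rho$ with $\sigma_\n$ an isomorphism (Proposition \ref{P}), injectivity of $d\sigma_{(x,y)}$ is equivalent to injectivity of the differential of the normalization morphism $\mu$ at the point $q$ over $z$; that is, $\mu$ is unramified at $q$. As $\mu$ is finite and bijective (Proposition \ref{P}(iv)), it is radicial, so on a suitable neighborhood of $q$ it is a finite radicial unramified morphism, hence a closed immersion, and being bijective it is an isomorphism onto an open neighborhood of $z$; since $X_\n\cong(\mathfrak{h}\times\mathfrak{h})/W$ is smooth at $q$, this shows $z$ is a regular point of $X$. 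The main obstacle is exactly this last passage: injectivity of the differential is not by itself enough to make an image smooth, and bijectivity alone is not enough either (a cusp has bijective normalization; a node has unramified one), so one must genuinely combine the two through the fact that a finite radicial unramified morphism is a closed immersion. The computation of $d\sigma$ and the statement that the invariant gradients span $\mathfrak{h}$ at regular elements form the technical heart of the argument.
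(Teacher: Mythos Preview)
Your argument is correct. For (i) you sharpen the paper's bound: the paper only observes $\Gamma^c\subset(\mathfrak{h}\setminus\mathfrak{h}')\times(\mathfrak{h}\setminus\mathfrak{h}')$ and concludes codimension at least $2$, whereas you identify $\Gamma^c=\bigcup_{\alpha\in\mathcal{R}}(\ker\alpha\times\ker\alpha)$ exactly, which is a cleaner description and yields the same estimate.

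For (iii) the two proofs are genuinely different. The paper never invokes Proposition~\ref{P}; instead it chooses $t_1,\ldots,t_{m+1}$ with each $x+t_iy$ regular, uses the linear embedding $\beta$ to replace $X$ by $\tau(\mathfrak{h}\times\mathfrak{h})$ where $\tau(x,y)=(\pi(x+t_1y),\ldots,\pi(x+t_{m+1}y))$, and then shows via completed local rings that the projection onto the first two blocks $\tau_{1,2}$ induces an isomorphism $\hat{\mathcal{O}}_{\tau(x,y),\tau(\mathfrak{h}\times\mathfrak{h})}\cong\hat{\mathcal{O}}_{\tau_{1,2}(x,y),\Bbbk^{2\rk\mathfrak{g}}}$, the latter being regular because $\pi$ is \'etale at regular points. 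Your route instead computes $d\sigma_{(x,y)}$ directly (using that the $\varepsilon_i$ span $\mathfrak{h}$ at a regular semisimple element), and then leverages Proposition~\ref{P}: since $\mu$ is finite and bijective with smooth source at $q$, unramifiedness at $q$ forces $\mathcal{O}_{X,z}\to\mathcal{O}_{X_\n,q}$ to be an isomorphism by Nakayama. The paper's argument is more self-contained (it does not need the normalization statement) but manipulates completions; yours is more conceptual and isolates precisely the two ingredients --- bijectivity and unramifiedness of $\mu$ --- that together rule out the cusp and node obstructions you mention. Both are valid, and your explicit tangent-space computation is arguably more transparent than the paper's diagram chase.
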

\begin{proof} (i) Let $(x,y)$ be in $\Gamma^c$, We have:$$(x,y)\in \Gamma^c \Rightarrow P_{x,y}\cap \mathfrak{h'}=\varnothing\Rightarrow x, y \in \mathfrak{h}\backslash \mathfrak{h'}.$$ Then $\Gamma^c$ is included in $\mathfrak{h}\backslash \mathfrak{h'} \times \mathfrak{h}\backslash \mathfrak{h'}.$
Hence $\mathrm{codim}_{\mathfrak{h}\times \mathfrak{h}}\Gamma^c\geqslant2$ and since $\sigma$ is a finite morphism, then $$\mathrm{codim}_X \sigma(\Gamma)^c\geq 2.$$

(ii) Let $(x,y)$ be an element in $\Gamma$ and let $(t_i)_{i=1,\ldots,m+1}$ be in $\k$, pairwise different, chosen such that $x+t_iy$ is regular for all $i$,  with $m= \sup\{d_i,i\in\{1,\ldots,\mathrm{rk}\mathfrak{g}\}\}$. Let $\beta$ be the morphism from $\Bbbk^{b_\mathfrak{g}+\mathrm{rk}\mathfrak{g}}$ to $\Bbbk^{(m+1)\mathrm{rk}\mathfrak{g}}$ defined by  $$\beta({(z_i^{(j)})}_{ 1\leq i\leq \mathrm{rk}\mathfrak{g}, 0\leq j\leq d_i})= \left( \sum\limits_{j=0}^{d_1} t_1^j z_1^{(j)},\sum\limits_{j=0}^{d_2} t_1^j z_2^{(j)},\ldots , \sum\limits_{j=0}^{d_{\mathrm{rk}\mathfrak{g}}}t_{m+1}^jz_{\mathrm{rk}\mathfrak{g}}^{(j)}\right).$$
Since $(t_i)_{i=1,\ldots,m+1}$ are pairwise different then $\beta$ is an isomorphism from $\Bbbk^{b_\mathfrak{g}+\mathrm{rk}\mathfrak{g}}$ onto $\beta (\Bbbk^{b_\mathfrak{g}+\mathrm{rk}\mathfrak{g}})$.
Moreover, $\beta(\sigma(\mathfrak{h}\times\mathfrak{h}))$ is equal to $\tau(\mathfrak{h}\times\mathfrak{h})$, where $\tau$ is the morphism from $\mathfrak{h}\times\mathfrak{h}$ to  $\Bbbk^{(m+1)\mathrm{rk}\mathfrak{g}}$ defined by $$\tau(x,y):=(\pi(x+t_1y),\ldots,\pi(x+t_{m+1}y)).$$ 
Let $\tau_{1,2}$ be the morphism from $\mathfrak{h}\times\mathfrak{h}$ to $\Bbbk^{2\mathrm{rk}\mathfrak{g}}$ defined by $$\tau_{1,2}(x,y):= \left( \pi(x+t_1y),\pi(x+t_2y)\right),$$ let $\varphi$ be the projection from $\tau(\mathfrak{h}\times\mathfrak{h})$ to $\tau_{1,2}(\mathfrak{h}\times\mathfrak{h})$ defined by $$\varphi \left( \pi(x+t_1y),\ldots, \pi(x+t_{m+1}y) \right):=(\pi(x+t_1y), \pi(x+t_2y))$$ and let $(\tau_{1,2})_*$ and $\varphi_*$ be the comorphisms of $\tau_{1,2}$ and $\varphi$ respectively. Then there is the following diagram:
$$ \xymatrix{
   \hat{\mathcal{O}}_{\tau(x,y),\tau(\mathfrak{h}\times\mathfrak{h})}  \ar[r]^{\tau_*}  & \hat{\mathcal{O}}_{(x,y),\mathfrak{h}\times\mathfrak{h}} \ar[d]^{(\tau_{1,2})_*^{-1}}\\
     & \hat{\mathcal{O}}_{\tau_{1,2}(x,y),\tau_{1,2}(\mathfrak{h}\times\mathfrak{h})} \ar[lu]_{\varphi_*}.
  }
  $$
Indeed, since $x+t_iy$ is a regular point, $\pi$ is an etale morphism  at $x+t_iy$ for $i=1,2$, hence $(\tau_{1,2})_*$ is an isomorphism. Let $\alpha$ be the morphism from $\mathfrak{h}\times\mathfrak{h}$ to $\mathfrak{h}^{m+1}$ defined by $$\alpha(x,y):=(x+t_1y,\ldots,x+t_{m+1}y)$$ and let $\gamma$ be the morphism from $\mathfrak{h}^{m+1}$ to $\k^{(m+1)\rk\mathfrak{g}}$ defined by $$\gamma(x_1,\ldots,x_{m+1}):=(\pi(x_1),\ldots,\pi(x_{m+1})).$$ Then there is the following commutative diagram:
$$ \xymatrix{
    \mathfrak{h}\times\mathfrak{h} \ar[r]^\alpha \ar[d]_{\tau_{1,2}} & \alpha(\mathfrak{h}\times\mathfrak{h})\ar[d]^\gamma\\
   \tau_{1,2}(\mathfrak{h}\times\mathfrak{h})  & \tau(\mathfrak{h}\times\mathfrak{h})\ar[l]_\varphi
  }.
  $$
Let $\alpha_*$ and $\gamma_*$ be the comorphisms of $\alpha$ and $\gamma$ respectively. Since $\alpha$ is an isomorphism, $(\alpha_*)^{-1}\circ(\tau_{1,2})_*$ is an isomorphism. Hence the coordinates on $\alpha(\mathfrak{h}\times\mathfrak{h})$ are formal series of coordinates on $\hat{\mathcal{O}}_{\tau_{1,2}(x,y),\tau_{1,2}(\mathfrak{h}\times\mathfrak{h})}$. Then the $p_i(x+t_jy)$'s are in the image of $\varphi_*$. Since the $p_i(x+t_jy)$'s generate $\hat{\mathcal{O}}_{\tau(x,y),\tau(\mathfrak{h}\times\mathfrak{h})}$, $\varphi_*$ is surjectif. Since $\hat{\mathcal{O}}_{\tau_{1,2}(x,y),\tau_{1,2}(\mathfrak{h}\times\mathfrak{h})}$ and $\hat{\mathcal{O}}_{\tau(x,y),\tau(\mathfrak{h}\times\mathfrak{h})}$ have same Krull's dimension $2\rk\mathfrak{g}$, $\ker\varphi_*=\{0\}$. Then $\varphi_*$ is an isomorphism. Hence $\hat{\mathcal{O}}_{\tau(x,y),\tau(\mathfrak{h}\times\mathfrak{h})}$ is a regular local ring, so that $\tau(x,y)$ is a smooth point of $\tau(\mathfrak{h}\times\mathfrak{h})$. Hence $\sigma(x,y)$ is a smooth point of $\sigma(\mathfrak{h}\times\mathfrak{h})$ since $\beta$ is an isomorphism from $\sigma(\mathfrak{h}\times\mathfrak{h})$ to $\tau(\mathfrak{h}\times\mathfrak{h})$. So for $(x,y)$ in $\Gamma$, $\sigma(x,y)$ is regular in $\sigma(\mathfrak{h}\times\mathfrak{h})$.
\end{proof}

We recall that the commuting variety $\mathcal{C}_\mathfrak{g}$ of $\mathfrak{g}$ is the set of $(x,y)$ in $\mathfrak{g}\times\mathfrak{g}$ such that $[x,y]=0$.

\begin{prop}: The variety $X$ is the image of the commuting variety $\mathcal{C}_\mathfrak{g}$ of $\mathfrak{g}$ by $\sigma$.
\end{prop}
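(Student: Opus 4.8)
The plan is to establish the two inclusions $X\subseteq\sigma(\mathcal{C}_\mathfrak{g})$ and $\sigma(\mathcal{C}_\mathfrak{g})\subseteq X$ separately; combining them yields the asserted equality. The first inclusion is immediate: since $\mathfrak{h}$ is abelian, every pair $(x,y)$ in $\mathfrak{h}\times\mathfrak{h}$ satisfies $[x,y]=0$, so $\mathfrak{h}\times\mathfrak{h}\subseteq\mathcal{C}_\mathfrak{g}$ and therefore $X=\sigma(\mathfrak{h}\times\mathfrak{h})\subseteq\sigma(\mathcal{C}_\mathfrak{g})$.

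For the reverse inclusion, the decisive structural fact is that $\sigma$ is constant on $G$-orbits: the polarizations $p_i^{(n)}$ are $G$-invariant, so $\sigma(g\cdot x,g\cdot y)=\sigma(x,y)$ for all $g$ in $G$ and all $(x,y)$ in $\mathfrak{g}\times\mathfrak{g}$; equivalently, $\sigma(x,y)$ records the coefficients of the polynomial $t\mapsto p_i(x+ty)$, which are $G$-invariant. First I would handle the pairs lying in the subset $\Omega$ of $\mathcal{C}_\mathfrak{g}$ consisting of $(x,y)$ with $x$ regular semisimple. For such a pair, $[x,y]=0$ forces $y\in\mathfrak{z}_\mathfrak{g}(x)$, and $\mathfrak{z}_\mathfrak{g}(x)$ is a Cartan subalgebra precisely because $x$ is regular semisimple; by the conjugacy of Cartan subalgebras there is $g$ in $G$ with $g\cdot\mathfrak{z}_\mathfrak{g}(x)=\mathfrak{h}$, so $(g\cdot x,g\cdot y)\in\mathfrak{h}\times\mathfrak{h}$. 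The invariance of $\sigma$ then gives $\sigma(x,y)=\sigma(g\cdot x,g\cdot y)\in X$, that is, $\sigma(\Omega)\subseteq X$.

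It remains to pass from $\Omega$ to all of $\mathcal{C}_\mathfrak{g}$ by a density argument. The set $\Omega$ is open in $\mathcal{C}_\mathfrak{g}$, since the condition that $x$ be regular semisimple is an open condition, and it is nonempty (choose $x$ regular semisimple and any $y$ in $\mathfrak{z}_\mathfrak{g}(x)$). Since the commuting variety $\mathcal{C}_\mathfrak{g}$ is irreducible, by a classical result of Richardson, a nonempty open subset is dense, so $\overline{\Omega}=\mathcal{C}_\mathfrak{g}$. As $\sigma$ is continuous and $X$ is closed by the first Proposition of this section, we conclude $\sigma(\mathcal{C}_\mathfrak{g})=\sigma(\overline{\Omega})\subseteq\overline{\sigma(\Omega)}\subseteq\overline{X}=X$, which together with the first inclusion finishes the proof.

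The main obstacle is the reverse inclusion, and within it the only substantive inputs are the irreducibility of $\mathcal{C}_\mathfrak{g}$, guaranteeing that the regular semisimple locus $\Omega$ is dense, together with the closedness of $X$ proved earlier; by contrast the reduction of a regular semisimple commuting pair to $\mathfrak{h}\times\mathfrak{h}$ and the $G$-invariance of $\sigma$ are routine.
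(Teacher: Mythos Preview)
Your argument is correct, but it follows a different path from the paper's proof. The paper handles the inclusion $\sigma(\mathcal{C}_\mathfrak{g})\subseteq X$ directly for \emph{every} commuting pair, without any density or closure step: given $(x,y)\in\mathcal{C}_\mathfrak{g}$ with Jordan decompositions $x=x_s+x_n$, $y=y_s+y_n$, the condition $[x,y]=0$ forces all four parts to commute pairwise, so $(x_s+ty_s)+(x_n+ty_n)$ is the Jordan decomposition of $x+ty$ for every $t$, whence $p_i(x+ty)=p_i(x_s+ty_s)$ and thus $\sigma(x,y)=\sigma(x_s,y_s)$. Since commuting semisimple elements lie in a common Cartan subalgebra, one conjugates $(x_s,y_s)$ into $\mathfrak{h}\times\mathfrak{h}$ and concludes $\sigma(x,y)\in X$.

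The trade-off is that your route invokes two nontrivial external inputs---Richardson's irreducibility of $\mathcal{C}_\mathfrak{g}$ and the closedness of $X$ established earlier---whereas the paper's Jordan-decomposition argument is self-contained and elementary, needing neither. On the other hand, your approach is perhaps more robust in spirit: it isolates the easy case (regular semisimple first coordinate) and then propagates by continuity, a pattern that generalizes well. Both are valid; the paper's is shorter and lighter on prerequisites.
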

\begin{proof} Let $(x,y)$ be in $\mathcal{C}_\mathfrak{g}$ and let $x=x_s+x_n$ and $y=y_s+y_n$ be the Jordan decompositions of $x$ and $y$. Since $[x,y]=0$, $[x_s,y_s]=0$. Then $x_s$ and $y_s$ belong to a same Cartan subalgebra of $\mathfrak{g}$. Since the Cartan subalgebras are conjuguate under $G$, there exists $g$ in $G$ such that $g(x_s)$ and $g(y_s)$ are in $\mathfrak{h}$. Since $$p_i(x+ty)=p_i(x_s+ty_s), \forall i=1,\ldots,\mathrm{rk}\mathfrak{g}$$ and $\sigma$ is $G$-invariant, $$\sigma(x,y)=\sigma(g(x_s),g(y_s)).$$ Hence $\sigma(\mathcal{C}_\mathfrak{g})$ is contained in $X$. Since $\mathcal{C}_\mathfrak{g}$ contains $\mathfrak{h}\times\mathfrak{h}$, $X$ is equal to $\sigma(\mathcal{C}_\mathfrak{g})$.
\end{proof}

\section{On the subvariety $\mathcal{B}_\mathfrak{g}$ of $\mathfrak{g}\times\mathfrak{g}$}

We consider the action of $\mathbf{B}$ on $G\times\mathfrak{b}\times\mathfrak{b}$ given by $b.(g,x,y)=(gb^{-1},b(x),b(y))$.
Let $\gamma'$ be the morphism from $G\times\mathfrak{b}\times\mathfrak{b}$ to $\mathfrak{g}\times\mathfrak{g}$ defined by  $$\gamma'(g,x,y)=(g(x),g(y))$$ and let $\gamma$ be the morphism from $G\times_\mathbf{B}\mathfrak{b}\times\mathfrak{b}$ to $\mathfrak{g}\times\mathfrak{g}$ defined through the quotient by $\gamma'$. Let $\mathcal{B}_\mathfrak{g}$ be the set of $(x,y)$ in $\mathfrak{g}\times \mathfrak{g}$ such that $x$ and $y$ are in the same Borel subalgebra. Then $\mathcal{B}_\mathfrak{g}$ is the image of $G\times_\mathbf{B}\mathfrak{b}\times\mathfrak{b}$ by $\gamma$.

\begin{prop}: The morphism $\gamma$ is a desingularization of $\mathcal{B}_\mathfrak{g}$. The subvariety ${\mathcal B}_{\mathfrak{g}}$ is closed and irreducible of dimension $3b_{\mathfrak{g}}-\mathrm{rk}\mathfrak{g}$. Moreover it is not normal.
\end{prop}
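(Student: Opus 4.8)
The plan is to exhibit $\gamma$ as a proper birational morphism from the smooth irreducible variety $G\times_\mathbf{B}\mathfrak{b}\times\mathfrak{b}$, and then to prove non-normality by producing a function on this smooth model that is integral over $\k[\mathcal{B}_\mathfrak{g}]$ but fails to descend.

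First the source. The variety $G\times_\mathbf{B}\mathfrak{b}\times\mathfrak{b}$ is the total space of the vector bundle over the flag variety $G/\mathbf{B}$ with fibre $\mathfrak{b}\times\mathfrak{b}$; as $G/\mathbf{B}$ is smooth, irreducible and complete and the fibre is affine, the source is smooth and irreducible of dimension $\dim(G/\mathbf{B})+2b_\mathfrak{g}=(b_\mathfrak{g}-\rk\mathfrak{g})+2b_\mathfrak{g}=3b_\mathfrak{g}-\rk\mathfrak{g}$, where I have used $\dim\mathfrak{g}=2b_\mathfrak{g}-\rk\mathfrak{g}$. To see that $\gamma$ is proper I would factor it through the closed immersion $G\times_\mathbf{B}\mathfrak{b}\times\mathfrak{b}\hookrightarrow (G/\mathbf{B})\times\mathfrak{g}\times\mathfrak{g}$ sending the class of $(g,x,y)$ to $(g\mathbf{B},g(x),g(y))$, whose image is the closed incidence set $\{(\mathfrak{b}'',u,v): u,v\in\mathfrak{b}''\}$, followed by the projection to $\mathfrak{g}\times\mathfrak{g}$, which is proper since $G/\mathbf{B}$ is complete. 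Hence $\mathcal{B}_\mathfrak{g}=\gamma(G\times_\mathbf{B}\mathfrak{b}\times\mathfrak{b})$ is closed, and it is irreducible as the image of an irreducible variety.

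Next, birationality, which also pins down the dimension. The fibre of $\gamma$ over $(u,v)$ is the set of Borel subalgebras containing both $u$ and $v$. Taking $x_0\in\mathfrak{h}'$, every Borel subalgebra containing the regular semisimple element $x_0$ contains its centraliser $\mathfrak{h}$ in $\mathfrak{g}$, so the fibre over $(x_0,0)$ is the finite set of Borel subalgebras containing $\mathfrak{h}$; this finite fibre forces, by the fibre-dimension theorem, $\dim\mathcal{B}_\mathfrak{g}=3b_\mathfrak{g}-\rk\mathfrak{g}$ and the generic fibre of $\gamma$ to be finite. To upgrade ``finite'' to ``one point'' I would bound the locus $Z\subseteq\mathcal{B}_\mathfrak{g}$ of pairs lying in at least two Borel subalgebras: $Z$ is the image of $\{(\mathfrak{b}_1,\mathfrak{b}_2,u,v):\mathfrak{b}_1\neq\mathfrak{b}_2,\ u,v\in\mathfrak{b}_1\cap\mathfrak{b}_2\}$, and stratifying the pairs $(\mathfrak{b}_1,\mathfrak{b}_2)$ by their relative position $w\in W$ gives, on the stratum $w$, dimension $(\dim(G/\mathbf{B})+\ell(w))+2(b_\mathfrak{g}-\ell(w))=3b_\mathfrak{g}-\rk\mathfrak{g}-\ell(w)$, using $\dim(\mathfrak{b}_1\cap\mathfrak{b}_2)=b_\mathfrak{g}-\ell(w)$. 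For $w\neq e$ this is at most $3b_\mathfrak{g}-\rk\mathfrak{g}-1$, so $Z$ is a proper closed subset and the generic pair lies in a unique Borel subalgebra. Thus the generic fibre of $\gamma$ is a single point, so in characteristic zero $\gamma$ is birational; being proper and birational with smooth source, $\gamma$ is a desingularization of $\mathcal{B}_\mathfrak{g}$.

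Finally, non-normality, which I expect to be the crux. Since $\mathbf{B}$ acts trivially on $\mathfrak{b}/\mathfrak{u}\cong\mathfrak{h}$, the assignment $(g,x,y)\mapsto x_0$ is $\mathbf{B}$-invariant and defines a morphism $\theta$ from $G\times_\mathbf{B}\mathfrak{b}\times\mathfrak{b}$ to $\mathfrak{h}$; for $\lambda\in\mathfrak{h}^*$ set $f:=\langle\lambda,\theta\rangle\in\k[G\times_\mathbf{B}\mathfrak{b}\times\mathfrak{b}]$. Because $\gamma$ is proper, $\k[G\times_\mathbf{B}\mathfrak{b}\times\mathfrak{b}]$ is a finite $\k[\mathcal{B}_\mathfrak{g}]$-module, so $f$ is integral over $\k[\mathcal{B}_\mathfrak{g}]$, and because $\gamma$ is birational $f$ lies in the fraction field of $\k[\mathcal{B}_\mathfrak{g}]$. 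The point is that $f$ does not descend to $\mathcal{B}_\mathfrak{g}$: over $(x_0,0)$ with $x_0\in\mathfrak{h}'$, the fibre consists of the Borel subalgebras containing $\mathfrak{h}$, indexed by $W$, and at the point indexed by $w$ the value of $f$ is $\langle\lambda,w^{-1}(x_0)\rangle$; since $x_0$ is regular these values are pairwise distinct for a generic $\lambda$, so $f$ is not constant on this fibre and hence is not a pullback $\gamma^*h$ of any $h\in\k[\mathcal{B}_\mathfrak{g}]$. Were $\mathcal{B}_\mathfrak{g}$ normal, $\k[\mathcal{B}_\mathfrak{g}]$ would be integrally closed and this integral element of its fraction field would lie in $\k[\mathcal{B}_\mathfrak{g}]$, a contradiction. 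The delicate input is exactly this failure of descent, which rests on $W\neq\{e\}$ (valid since $\mathfrak{g}$ is semisimple) and on the regular semisimple fibre being a nontrivial $W$-set; by contrast smoothness, properness and the dimension count are comparatively routine.
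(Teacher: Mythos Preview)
Your argument is correct, and its overall architecture matches the paper's: smooth irreducible source as a vector bundle over $G/\mathbf{B}$, properness via the incidence variety in $(G/\mathbf{B})\times\mathfrak{g}\times\mathfrak{g}$, birationality, and non-normality detected by the fibre over a regular semisimple point $(x_0,0)$ having $|W|>1$ elements. The two places where you diverge are worth noting. For birationality, the paper invokes an external result (Bolsinov, Kostant) to exhibit, on the open set $\Omega_\mathfrak{g}=\{(x,y):P_{x,y}\setminus\{0\}\subset\mathfrak{g}',\ \dim P_{x,y}=2\}$, the unique Borel containing $(x,y)$ as the span of the polarizations $\varepsilon_i^{(m)}(x,y)$; your Bruhat-stratification dimension count for the locus of pairs in two Borels is more elementary and self-contained, and in fact yields the sharper statement that this locus has codimension at least one. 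For non-normality, the paper simply applies Zariski's Main Theorem: were $\mathcal{B}_\mathfrak{g}$ normal, a proper birational $\gamma$ would have connected fibres, so a finite fibre would be a single point, contradicting $|\gamma^{-1}(x_0,0)|=|W|$. Your route---producing an explicit element $f=\langle\lambda,\theta\rangle$ of $\Bbbk[G\times_\mathbf{B}\mathfrak{b}\times\mathfrak{b}]$ integral over $\Bbbk[\mathcal{B}_\mathfrak{g}]$ that fails to descend---is longer but gives a concrete witness; it is essentially the same obstruction repackaged, and the map $\theta$ you use is precisely (a component of) the morphism $\tau$ the paper introduces just after this proposition for its analysis of $\Bbbk[\mathcal{B}_{\mathfrak{g},\mathrm{n}}]^G$.
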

\begin{proof} Since $\mathcal{B}_\mathfrak{g}$ is the image of $G\times_\mathbf{B}\mathfrak{b}\times\mathfrak{b}$ by $\gamma$ and since $G/\mathbf{B}$ is a projective variety, ${\mathcal B}_{\mathfrak{g}}$ is closed and ${\mathcal B}_{\mathfrak{g}}$ is irreducible as the image of an irreducible variety.\\ Since $G\times_\mathbf{B}\mathfrak{b}\times\mathfrak{b}$ is a vector bundle on the smooth variety $G/\mathbf{B}$ then it is a smooth variety. Let $\mathcal{B}$ be the subvariety of the Borel subalgebras of $\mathfrak{g}$ and let $\tilde{\mathcal{B}}$ be the subvariety of elements $(u,x,y)$ of $\mathcal{B}\times\mathfrak{g}\times\mathfrak{g}$ such that $u$ contains $x$ and $y$. Hence there exists an isomorphism $\upsilon$ from $G\times_\mathbf{B}\mathfrak{b}\times\mathfrak{b}$ onto $\tilde{\mathcal{B}}$ such that $\gamma$ is the compound of $\upsilon$ and the canonical projection of $\tilde{\mathcal{B}}$ on $\mathfrak{g}\times\mathfrak{g}$. Whence $\gamma$ is a proper morphism since $G/\mathbf{B}$ is a projective variety.\\
It remains to show that $\gamma$ is a birational morphism. Let be $$\Omega_\mathfrak{g}:=\{(x,y)\in\mathfrak{g}\times\mathfrak{g}|P_{x,y}\backslash\{0\}\subset\mathfrak{g}'\mbox{, }\dim P_{x,y}=2\}.$$ The subset $\Omega_\mathfrak{g}$ is an open subset of $\mathfrak{g}\times\mathfrak{g}$ and $\Omega_\mathfrak{g}\cap\mathcal{B}_\mathfrak{g}$ is a nonempty open subset of $\mathcal{B}_\mathfrak{g}$. Let $(x,y)$ be an element in $\Omega_\mathfrak{g}\cap\mathcal{B}_\mathfrak{g}$. According to \cite{18} (Corollary 2) and \cite{28} (Theorem 9), the subspace $V_{x,y}$ generated by $\{\varepsilon_i^{(m)}(x,y), i=1,\ldots,\mathrm{rk}\mathfrak{g},m=0,\ldots,d_i\}$ is the unique Borel subalgebra which contains $x$ and $y$; hence $\upsilon^{-1}(V_{x,y},x,y)$ is the unique point in $G\times_\mathbf{B}\mathfrak{b}\times\mathfrak{b}$ above $(x,y)$. Whence $\gamma$ is birational.\\
 Since $\gamma$ is birational,$$\dim \mathcal{B}_\mathfrak{g}=\dim(G\times_\mathbf{B}\mathfrak{b}\times\mathfrak{b})=3b_\mathfrak{g}-\mathrm{rk}\mathfrak{g}.$$
Since $\gamma$ is a birational morphism, for $\mathfrak{g}$ non trivial, if ${\mathcal B}_{\mathfrak{g}}$ would be normal, by Zariski's main theorem a fiber of $\gamma$ would have cardinality 1 if it was finite, but $|\gamma^{-1}(x,0)|=|W|$ for $x$ in $\mathfrak{h}'$. So, $\mathcal{B}_\mathfrak{g}$ isn't normal.
\end{proof}

Let $({\mathcal B}_{\mathfrak{g},\mathrm{n}},\eta)$ be the normalization of ${\mathcal B}_{\mathfrak{g}}$.
 
\begin{Lemme} There exists a unique closed immersion $\iota_\n$ from $\mathfrak{h}\times\mathfrak{h}$ to ${\mathcal B}_{\mathfrak{g},\n}$ such that $\eta\circ\iota_\n=\iota$, whenever $\iota$ is the injection of $\mathfrak{h}\times\mathfrak{h}$ in $\mathcal{B}_\mathfrak{g}$.
\end{Lemme}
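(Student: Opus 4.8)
The plan is to reduce the statement to the bare existence of a lift, to build that lift from the desingularization $\gamma$ of the previous proposition, and to isolate uniqueness as the genuinely delicate point.

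First I would record that $\iota$ is a closed immersion: since $\mathfrak{h}\subset\mathfrak{b}$, every pair $(x,y)\in\mathfrak{h}\times\mathfrak{h}$ lies in the common Borel $\mathfrak{b}$, so $\mathfrak{h}\times\mathfrak{h}\subset\mathcal{B}_\mathfrak{g}$, and it is closed there because it is a linear subspace of $\mathfrak{g}\times\mathfrak{g}$. Next I claim that \emph{any} morphism $\iota_\n$ with $\eta\circ\iota_\n=\iota$ is automatically a closed immersion, so that the closed-immersion assertion is purely formal. Indeed $\eta$ is separated and $\iota=\eta\circ\iota_\n$ is proper (being a closed immersion), so by the cancellation property $\iota_\n$ is proper; and since $\iota$ is a monomorphism while $\iota_\n$ sits to the left of $\eta$, the equality $\eta\circ\iota_\n=\iota$ forces $\iota_\n$ to be a monomorphism too. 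A proper monomorphism is a closed immersion, whence the claim. Thus the real content is to produce, and to pin down, the lift itself.

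To produce it I would use the desingularization $\gamma$. The source $G\times_\mathbf{B}\mathfrak{b}\times\mathfrak{b}$ is smooth, hence normal, and $\gamma$ is dominant onto $\mathcal{B}_\mathfrak{g}$; by the universal property of the normalization already invoked in the proof of Proposition \ref{P}, $\gamma$ factors as $\eta\circ\tilde\gamma$ for a unique morphism $\tilde\gamma\colon G\times_\mathbf{B}\mathfrak{b}\times\mathfrak{b}\to\mathcal{B}_{\mathfrak{g},\n}$. I then precompose with the tautological section $s\colon\mathfrak{h}\times\mathfrak{h}\to G\times_\mathbf{B}\mathfrak{b}\times\mathfrak{b}$, $s(x,y)=[e,x,y]$, which is well defined because $x,y\in\mathfrak{h}\subset\mathfrak{b}$, and set $\iota_\n:=\tilde\gamma\circ s$. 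Then $\eta\circ\iota_\n=\eta\circ\tilde\gamma\circ s=\gamma\circ s=\iota$, since $\gamma([e,x,y])=(x,y)$, and by the previous paragraph this $\iota_\n$ is a closed immersion.

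The main obstacle is uniqueness. The clean argument one would like is: $\eta$ is birational, hence an isomorphism over a dense open $V\subseteq\mathcal{B}_\mathfrak{g}$, so two lifts of $\iota$ agree on $\iota^{-1}(V)$ and therefore everywhere by separatedness — provided $\iota^{-1}(V)$ is dense in $\mathfrak{h}\times\mathfrak{h}$, i.e. that the generic point of $\mathfrak{h}\times\mathfrak{h}$ is a normal point of $\mathcal{B}_\mathfrak{g}$. This is exactly where the difficulty lies, for the computation $|\gamma^{-1}(x,0)|=|W|$ from the previous proposition shows that over a generic point of $\mathfrak{h}\times\mathfrak{h}$ the fibre of $\gamma$ already has $|W|$ points; so $\mathfrak{h}\times\mathfrak{h}$ meets the non-normal locus and $\eta$ is not an isomorphism over its generic point, which also means the universal property cannot be quoted for the non-dominant map $\iota$. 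Consequently uniqueness has to be understood as the selection of the branch determined by the fixed Borel $\mathfrak{b}$: I would make this precise by showing that, among the several points of $\eta^{-1}(\iota(x,y))$, the section $s$ singles out the one lying on the image of the $\mathbf{B}$-chart $[e,\cdot,\cdot]$, and that this characterisation determines $\iota_\n$ along all of $\mathfrak{h}\times\mathfrak{h}$. Controlling this branching not merely at the generic point but over the whole of $\mathfrak{h}\times\mathfrak{h}$ is where the work concentrates.
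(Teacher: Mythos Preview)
Your construction of the lift is exactly the paper's: factor the desingularization $\gamma$ through the normalization to obtain $\gamma_\n$, then precompose with the section $i\colon(x,y)\mapsto[e,x,y]$. The paper's verification that $\iota_\n$ is closed is actually weaker than yours: it only observes that $\gamma_\n$ and $i$ are closed maps, hence so is their composite, without arguing that the result is a closed \emph{immersion}. Your proper-monomorphism argument fills that gap cleanly.

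Your worry about uniqueness is well founded, and in fact the paper's proof does not address uniqueness at all. More than that, the uniqueness claim as stated is not correct: for each $w\in W$, the section
\[
s_w\colon (x,y)\longmapsto \overline{(g_w,\,w^{-1}x,\,w^{-1}y)}\in G\times_{\mathbf{B}}\mathfrak{b}\times\mathfrak{b}
\]
(with $g_w\in N_G(\mathfrak{h})$ a representative of $w$) gives a morphism $\iota_{\n,w}:=\gamma_\n\circ s_w$ satisfying $\eta\circ\iota_{\n,w}=\iota$, and by your own proper-monomorphism argument each $\iota_{\n,w}$ is a closed immersion. For $(x,y)$ with $x\in\mathfrak{h}'$, the $|W|$ points $s_w(x,y)$ are pairwise distinct and isolated in $\gamma^{-1}(x,y)$; since $\gamma_\n$ has connected fibres (Zariski's Main Theorem, $\mathcal{B}_{\mathfrak{g},\n}$ being normal), their images in $\mathcal{B}_{\mathfrak{g},\n}$ are pairwise distinct. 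Hence the $\iota_{\n,w}$ are genuinely different lifts. So there is nothing to prove here: the word ``unique'' in the statement should be read as ``canonical, determined by the fixed Borel $\mathfrak{b}$'', and the applications (Propositions~\ref{PSIBn} and~\ref{P4}) only use the specific $\iota_\n=\gamma_\n\circ i$ constructed, never an abstract uniqueness property. Your instinct to locate the difficulty in the branching over $\mathfrak{h}\times\mathfrak{h}$ is exactly right; the resolution is simply that no uniqueness argument is needed, not that a subtle one exists.
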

\begin{proof} Since $\gamma$ is a dominant morphism from $G\times_\mathbf{B}\mathfrak{b}\times\mathfrak{b}$ to $\mathcal{B}_\mathfrak{g}$ and since $G\times_\mathbf{B}\mathfrak{b}\times\mathfrak{b}$ is normal, there exists a unique morphism $\gamma_\n$ from $G\times_\mathbf{B}\mathfrak{b}\times\mathfrak{b}$ to ${\mathcal B}_{\mathfrak{g},\n}$ such that $\eta\circ\gamma_\n=\gamma$ (\cite{10} [ch. II, Ex. 3.8]). Then there is a commutative diagram:
$$ \xymatrix{
    G\times_\mathbf{B}\mathfrak{b}\times \mathfrak{b} \ar[r]^{\gamma_\n} \ar[rd]_{\gamma} & {\mathcal B}_{\mathfrak{g},\n} \ar[d]^{\eta}\\
     & \mathcal{B}_\mathfrak{g}.
       }
  $$
Let $i$ be the canonical injection of $\mathfrak{h}\times\mathfrak{h}$ into $G\times_\mathbf{B}\mathfrak{b}\times \mathfrak{b}$. Hence taking $\iota_\n=\gamma_\n\circ i$, $\eta\circ\iota_\n=\iota$ since $\iota=\gamma\circ i$. Then there is a commutative diagram:
$$ \xymatrix{
    \mathfrak{h}\times \mathfrak{h} \ar[r]^{\iota_\n} \ar[rd]_{\iota} & \iota_\n(\mathfrak{h}\times \mathfrak{h}) \ar[d]^{\eta}\\
     & \mathcal{B}_\mathfrak{g}.
       }
  $$
Since $\gamma_\n$ and $i$ are closed, $\iota_\n$ is closed.
\end{proof}

\begin{prop}: We have the following properties:
\begin{itemize}
 \item[(i)] $\sigma({\mathcal B}_{\mathfrak{g}})$ is equal to $X$,
 \item[(ii)] ${\mathcal B}_{\mathfrak{g}}$ is an irreducible component of $\sigma^{-1}(X)$.
 \end{itemize}
\end{prop}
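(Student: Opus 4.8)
The plan is to treat the two inclusions of (i) directly and then to deduce (ii) from a local computation of the differential of $\sigma$ at a generic point of $\mathcal{B}_\mathfrak{g}$.

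For (i), the inclusion $X\subseteq\sigma(\mathcal{B}_\mathfrak{g})$ is immediate, since $\mathfrak{h}\times\mathfrak{h}\subseteq\mathcal{B}_\mathfrak{g}$ (because $\mathfrak{h}\subseteq\mathfrak{b}$) gives $X=\sigma(\mathfrak{h}\times\mathfrak{h})\subseteq\sigma(\mathcal{B}_\mathfrak{g})$. For the reverse inclusion, let $(x,y)\in\mathcal{B}_\mathfrak{g}$; after applying an element of $G$ and using that the $p_i$, hence the $p_i^{(n)}$, are $G$-invariant, I may assume $x,y\in\mathfrak{b}$. Using the one-parameter subgroup $h(t)$ and the limit $\lim_{t\to 0}h(t)(x+sy)=(x+sy)_0=x_0+sy_0$ together with the $G$-invariance of each $p_i$, I obtain $p_i(x+sy)=p_i(x_0+sy_0)$ for every $s\in\Bbbk$ and every $i$; reading off the coefficients in $s$ yields $\sigma(x,y)=\sigma(x_0,y_0)$ with $(x_0,y_0)\in\mathfrak{h}\times\mathfrak{h}$, whence $\sigma(x,y)\in X$.

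For (ii), recall that $\mathcal{B}_\mathfrak{g}$ is closed and irreducible of dimension $3b_\mathfrak{g}-\rk\mathfrak{g}$, and by (i) it is contained in the closed set $\sigma^{-1}(X)$. It therefore suffices to exhibit one point $(x,y)\in\mathcal{B}_\mathfrak{g}$ at which $\sigma^{-1}(X)$ is smooth of dimension $3b_\mathfrak{g}-\rk\mathfrak{g}$: then $\sigma^{-1}(X)$ has a unique local component through that point, and since $\mathcal{B}_\mathfrak{g}$ is irreducible of that same dimension and passes through it, $\mathcal{B}_\mathfrak{g}$ must coincide with that component. I would take $(x,y)\in\Omega_\mathfrak{g}\cap\mathcal{B}_\mathfrak{g}$ (a nonempty open subset of $\mathcal{B}_\mathfrak{g}$) with $(x_0,y_0)\in\Gamma$, which holds generically since $x_0,y_0\in\mathfrak{h}$ and a generic plane $P_{x_0,y_0}$ meets $\mathfrak{h}'$; by the proposition on $\Gamma$ the point $\sigma(x,y)=\sigma(x_0,y_0)$ is then a smooth point of $X$, where $X$ has dimension $2\rk\mathfrak{g}$. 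Consequently it is enough to prove that $\sigma$ is submersive at $(x,y)$, i.e. that $\mathrm d\sigma_{(x,y)}$ has rank $b_\mathfrak{g}+\rk\mathfrak{g}$: for then $\sigma^{-1}(X)$ is smooth near $(x,y)$ of codimension $\mathrm{codim}_{\Bbbk^{b_\mathfrak{g}+\rk\mathfrak{g}}}X=b_\mathfrak{g}-\rk\mathfrak{g}$ in $\mathfrak{g}\times\mathfrak{g}$, hence of dimension $(4b_\mathfrak{g}-2\rk\mathfrak{g})-(b_\mathfrak{g}-\rk\mathfrak{g})=3b_\mathfrak{g}-\rk\mathfrak{g}$.

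The main obstacle is this rank computation, and it is exactly where the geometry of $\Omega_\mathfrak{g}$ enters. Differentiating $p_i^{(n)}(x,y)=[t^n]\,p_i(x+ty)$ in a direction $(\xi,\eta)$ and using $\varepsilon_i(x+ty)=\sum_m t^m\varepsilon_i^{(m)}(x,y)$, one finds that the $(i,n)$ component of $\mathrm d\sigma_{(x,y)}(\xi,\eta)$ is $\<\varepsilon_i^{(n)}(x,y),\xi\>+\<\varepsilon_i^{(n-1)}(x,y),\eta\>$ (with the convention $\varepsilon_i^{(m)}=0$ for $m\notin\{0,\ldots,d_i-1\}$), so that the rank of $\mathrm d\sigma_{(x,y)}$ equals the dimension of the span of the $b_\mathfrak{g}+\rk\mathfrak{g}$ vectors $(\varepsilon_i^{(n)}(x,y),\varepsilon_i^{(n-1)}(x,y))$ in $\mathfrak{g}\times\mathfrak{g}$. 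Now for $(x,y)\in\Omega_\mathfrak{g}\cap\mathcal{B}_\mathfrak{g}$ the results used above give that $V_{x,y}=\mathrm{span}\{\varepsilon_i^{(m)}(x,y)\}$ is the unique Borel subalgebra containing $x$ and $y$, of dimension $b_\mathfrak{g}$; since there are exactly $\sum_i d_i=b_\mathfrak{g}$ vectors $\varepsilon_i^{(m)}(x,y)$, they form a basis of $V_{x,y}$. A short triangular argument then shows the shifted vectors are linearly independent: in a vanishing combination $\sum_{i,n}c_{i,n}(\varepsilon_i^{(n)},\varepsilon_i^{(n-1)})=0$, the first coordinate forces $c_{i,n}=0$ for all $n\le d_i-1$, after which the second coordinate reduces to $\sum_i c_{i,d_i}\varepsilon_i^{(d_i-1)}=0$, and this forces $c_{i,d_i}=0$ because $\{\varepsilon_i^{(d_i-1)}\}_{i}$ is part of the basis. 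Hence $\mathrm d\sigma_{(x,y)}$ has the full rank $b_\mathfrak{g}+\rk\mathfrak{g}$, which completes the argument and shows that $\mathcal{B}_\mathfrak{g}$ is an irreducible component of $\sigma^{-1}(X)$.
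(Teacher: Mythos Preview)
Your proof of (i) is essentially the same as the paper's: reduce to $(x,y)\in\mathfrak{b}\times\mathfrak{b}$ by $G$-invariance and use $p_i(x+ty)=p_i(x_0+ty_0)$ to get $\sigma(x,y)=\sigma(x_0,y_0)\in X$, together with the trivial inclusion $\mathfrak{h}\times\mathfrak{h}\subset\mathcal{B}_\mathfrak{g}$.

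For (ii) you take a genuinely different route. The paper simply invokes the result of Charbonnel--Moreau \cite{5} that every fibre of $\sigma$ on $\mathfrak{g}\times\mathfrak{g}$ has dimension $3b_\mathfrak{g}-3\rk\mathfrak{g}$; since $\dim X=2\rk\mathfrak{g}$, this gives $\dim\sigma^{-1}(X)=3b_\mathfrak{g}-\rk\mathfrak{g}=\dim\mathcal{B}_\mathfrak{g}$, and the conclusion follows at once. Your argument avoids that external input: you compute $d\sigma_{(x,y)}$ explicitly in terms of the $\varepsilon_i^{(m)}(x,y)$, use the Bolsinov/Kostant fact (already exploited in the paper's desingularization of $\mathcal{B}_\mathfrak{g}$) that at a point of $\Omega_\mathfrak{g}\cap\mathcal{B}_\mathfrak{g}$ the $b_\mathfrak{g}$ vectors $\varepsilon_i^{(m)}(x,y)$ form a basis of a Borel, and then a short triangular argument on the pairs $(\varepsilon_i^{(n)},\varepsilon_i^{(n-1)})$ to conclude that $d\sigma_{(x,y)}$ has full rank $b_\mathfrak{g}+\rk\mathfrak{g}$. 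Combined with the earlier smoothness of $X$ at $\sigma(x,y)$ for $(x_0,y_0)\in\Gamma$, this shows $\sigma^{-1}(X)$ is smooth of the right dimension at $(x,y)$, whence $\mathcal{B}_\mathfrak{g}$ is a component. Your approach is longer but more self-contained, relying only on ingredients already present in the paper; the paper's one-line argument is cleaner but imports the fibre-dimension theorem from \cite{5}.
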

\begin{proof} (i) Let $(x,y)$ be in ${\mathcal B}_{\mathfrak{g}}$. Since $\sigma$ is $G$-invariant, we can assume that $x$ and $y$ are in $\mathfrak{b}$. We have:$$p_i(x+ty)=p_i(x_0+ty_0)\mbox{, } \forall  i=1,\ldots, \mathrm{rk}\mathfrak{g}\atop\Rightarrow\sigma(x,y)=\sigma(x_0,y_0),$$ then $\sigma({\mathcal B}_{\mathfrak{g}})$ is contained in $X$, whence the equality since $\mathfrak{h}\times\mathfrak{h}$ is contained in $\mathcal{B}_\mathfrak{g}$.\\

(ii) For $z$ in $\Bbbk^{b_\mathfrak{g}+\mathrm{rk}\mathfrak{g}}$, by \cite{5}, the dimension of $\sigma^{-1}(z)$ is $3b_\mathfrak{g}-3\mathrm{rk}\mathfrak{g}$, then $$\dim\sigma^{-1}(X)=3b_\mathfrak{g}-\mathrm{rk}\mathfrak{g}= \dim {\mathcal B}_{\mathfrak{g}},$$ hence ${\mathcal B}_{\mathfrak{g}}$ is an irreducible component of $\sigma^{-1}(X)$.
\end{proof}

Let $\tau'$ be the morphism from $G\times\mathfrak{b}\times \mathfrak{b}$ to $\mathfrak{h}\times\mathfrak{h}$ defined by $$\tau'(g,x,y)=(x_0,y_0).$$ For $b$ in $\mathbf{B}$, 
\begin{eqnarray*}
\tau'(b.(g,x,y))&=&\tau'(gb^{-1},b(x),b(y))\\&=&(b(x)_0,b(y)_0)\\&=&(x_0,y_0),
\end{eqnarray*}
indeed, since $\mathbf{B}=\mathbf{U}\mathbf{H}$, there exists $u$ in $\mathbf{U}$ and $h$ in $\mathbf{H}$ such that $b=uh$.
Then $b(x)=b(x_0+x_+)=u(x_0)+b(x_+)$, $u(x_0)$ is in $x_0+\mathfrak{u}$ and $b(x_+)$ is in $\mathfrak{u}$, hence $b(x)$ is in $x_0+\mathfrak{u}$. Then $\tau'$ is constant on the $\mathbf{B}$-orbits. Hence there exists a morphism $\tau$ from $G\times_\mathbf{B}\mathfrak{b}\times\mathfrak{b}$ to $\mathfrak{h}\times\mathfrak{h}$ defined by $$\tau(\overline{(g,x,y)})=(x_0,y_0).$$

\begin{Lemme}\label{L2.2} Let $z$ and $z'$ be in $G\times_\mathbf{B}\mathfrak{b}\times\mathfrak{b}$ such that $\gamma(z')=\gamma(z)$. Then there exists $w$ in $W$ such that $\tau(z')=w.\tau(z)$.
\end{Lemme}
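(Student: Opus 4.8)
The plan is to convert the $G$-conjugacy relation encoded in the equality $\gamma(z')=\gamma(z)$ into a $W$-conjugacy relation in $\mathfrak{h}\times\mathfrak{h}$, exploiting that the invariants $p_i$, when restricted to $\mathfrak{b}$, only detect the $\mathfrak{h}$-component of their argument, and then invoking the injectivity of $\bar\sigma$ from Proposition \ref{P}. First I would choose representatives $z=\overline{(g,x,y)}$ and $z'=\overline{(g',x',y')}$ with $x,y,x',y'\in\mathfrak{b}$, so that $\tau(z)=(x_0,y_0)$ and $\tau(z')=(x'_0,y'_0)$. The hypothesis $\gamma(z')=\gamma(z)$ then reads $g(x)=g'(x')$ and $g(y)=g'(y')$, whence $g(x+ty)=g'(x'+ty')$ for every $t\in\Bbbk$.

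The key step is the observation that for any $v\in\mathfrak{b}$ and any $i$ one has $p_i(v)=p_i(v_0)$. Indeed, since $p_i$ is $G$-invariant, $p_i(v)=p_i(h(t)(v))$ for all $t$; letting $t\to0$ and using $\lim_{t\to0}h(t)(v)=v_0$ together with the continuity of $p_i$ gives the claim. Because the projection $v\mapsto v_0$ of $\mathfrak{b}$ onto $\mathfrak{h}$ is linear, $(x+ty)_0=x_0+ty_0$, and combining this with $G$-invariance yields, for every $i$ and every $t$,
$$p_i(x_0+ty_0)=p_i(x+ty)=p_i\bigl(g(x+ty)\bigr)=p_i\bigl(g'(x'+ty')\bigr)=p_i(x'+ty')=p_i(x'_0+ty'_0).$$
Reading off the coefficients of this polynomial identity in $t$, and recalling that the $p_i^{(n)}$ are exactly these coefficients, I conclude $\sigma(x_0,y_0)=\sigma(x'_0,y'_0)$, that is, $\bar\sigma(\overline{(x_0,y_0)})=\bar\sigma(\overline{(x'_0,y'_0)})$.

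To finish, I would apply Proposition \ref{P}(i): since $\bar\sigma$ is a bijection it is in particular injective, so $\overline{(x_0,y_0)}=\overline{(x'_0,y'_0)}$, which provides $w\in W$ with $x'_0=w(x_0)$ and $y'_0=w(y_0)$, i.e. $\tau(z')=w.\tau(z)$. The genuinely substantive point is the reduction $p_i(v)=p_i(v_0)$ for $v\in\mathfrak{b}$: this is what converts the $G$-orbit information carried by $\gamma$ into information about $\sigma$ on $\mathfrak{h}\times\mathfrak{h}$, after which the conclusion follows formally from the separation property of $\bar\sigma$. Everything else amounts to routine bookkeeping with representatives, the linearity of the projection $(\,\cdot\,)_0$, and the definition of the polarizations.
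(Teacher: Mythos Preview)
Your argument is correct, but it follows a genuinely different route from the paper's own proof. The paper argues structurally: from $(x',y')=g'^{-1}g.(x,y)$ it writes $g'^{-1}g=ug_wb$ via the Bruhat decomposition, and then checks directly, using that $g_w$ normalises $\mathfrak{h}$ and permutes the root spaces while $u,b\in\mathbf{B}$ fix the $\mathfrak{h}$-component, that $x'_0=g_w(x_0)$ and $y'_0=g_w(y_0)$. This exhibits $w$ explicitly as the Bruhat cell of $g'^{-1}g$ and is entirely self-contained. Your approach is more invariant-theoretic: you use the fact (already exploited in Proposition~4.2(i)) that $p_i(v)=p_i(v_0)$ on $\mathfrak{b}$ to obtain $\sigma(x_0,y_0)=\sigma(x'_0,y'_0)$, and then invoke the injectivity of $\bar\sigma$ established in Proposition~\ref{P}(i). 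Your proof is shorter and recycles earlier results cleanly; the paper's proof avoids any forward reference to $\sigma$ and gives the additional information of which $w$ works.
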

\begin{proof} Let $(g,x,y)$ and $(g',x',y')$ be in $G\times\mathfrak{b}\times\mathfrak{b}$ two representatives of $z$ and $z'$ respectively. We have $$\gamma(z)=\gamma(z')\Rightarrow(g'(x'),g'(y'))=(g(x),g(y))\atop\Rightarrow(x',y')=g'^{-1}g.(x,y).$$
Since $G=\underset{w\in W}{\bigcup}\mathbf{U}w\mathbf{B}$, there exists $u$ in $\mathbf{U}$, $w$ in $W$, $g_w$ in $\mathbf{N}_G(\mathfrak{h})$ a representative of $w$ and $b$ in $\mathbf{B}$ such that $g'^{-1}g=ug_wb$. Let $x=x_0+x_+$, $x'=x'_0+x'_+$, $y=y_0+y_+$ and $y'=y'_0+y'_+$, whenever $x_0$, $x'_0$, $y_0$ and $y'_0$ are in $\mathfrak{h}$ and $x_+$, $x'_+$, $y_+$ and $y'_+$ are in $\mathfrak{u}$. We have: $$(x',y')=(ug_wb(x),ug_wb(y))\atop\Rightarrow(u^{-1}(x'),u^{-1}(y'))=(g_wb(x),g_wb(y)).$$ Since $b(x)$ is in $x_0+\mathfrak{u}$, since $b(y)$ is in $y_0+\mathfrak{u}$, since $u^{-1}(x')$ is in $x'_0+\mathfrak{u}$ and since $u^{-1}(y')$ is in $y'_0+\mathfrak{u}$, there exists $v_x$, $v_y$, $v_{x'}$ and $v_{y'}$ in $\mathfrak{u}$, such that $$b(x)=x_0+v_x\mbox{, }b(y)=y_0+v_y,\atop u^{-1}(x')=x'_0+v_{x'}\mbox{ and } u^{-1}(y')=y'_0+v_{y'}.$$ Hence $$x'_0+v_{x'}=g_w(x_0)+g_w(v_x)\mbox{ and }\atop y'_0+v_{y'}=g_w(y_0)+g_w(v_y).$$ Since $v_x$ is in $\sum\limits_{\substack{\alpha\in\mathcal{R}}}\mathfrak{g}^\alpha$, $g_w(v_x)$ is in $\sum\limits_{\substack{\alpha\in \mathcal{R}}}\mathfrak{g}^{g_w.\alpha}=\sum\limits_{\substack{\alpha\in \mathcal{R}}}\mathfrak{g}^\alpha$. Then $g_w(v_x)-v_{x'}$ is in $\sum\limits_{\substack{\alpha\in \mathcal{R}}}\mathfrak{g}^\alpha$ and $g_w(x_0)=x'_0$, similarly $g_w(y_0)=y'_0$. Hence $\tau(z')=g_w.\tau(z)$.
\end{proof}

\begin{prop}\label{PSIBn} There exists uniquely defined morphism $\Phi$ from $S(\mathfrak{h}\times\mathfrak{h})^W$ to $\Bbbk[{\mathcal B}_{\mathfrak{g},\n}]^G$ such that $$\Phi(P)\circ\iota_\n=\mathrm{P}\mbox{, }\forall\mathrm{P}\in S(\mathfrak{h}\times\mathfrak{h})^W.$$ 
\end{prop}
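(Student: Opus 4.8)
The plan is to transport $P$ to the resolution $G\times_\mathbf{B}\mathfrak{b}\times\mathfrak{b}$ by means of the Cartan projection $\tau$ and then to descend along $\gamma_\n$ to the normalization ${\mathcal B}_{\mathfrak{g},\n}$. For $P$ in $S(\mathfrak{h}\times\mathfrak{h})^W$, set $f:=P\circ\tau$, a regular function on $G\times_\mathbf{B}\mathfrak{b}\times\mathfrak{b}$. Since $\tau(\overline{(g,x,y)})=(x_0,y_0)$ does not depend on $g$, the function $f$ is invariant under the left $G$-action $g'.\overline{(g,x,y)}=\overline{(g'g,x,y)}$. Moreover $f$ is constant on the fibres of $\gamma$: if $\gamma(z')=\gamma(z)$, then by Lemma \ref{L2.2} there is $w$ in $W$ with $\tau(z')=w.\tau(z)$, so the $W$-invariance of $P$ gives $f(z')=P(\tau(z'))=P(\tau(z))=f(z)$. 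As $\gamma=\eta\circ\gamma_\n$, the fibres of $\gamma_\n$ are contained in those of $\gamma$, hence $f$ is constant on the fibres of $\gamma_\n$ as well.

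Next I would descend $f$ to a regular function on ${\mathcal B}_{\mathfrak{g},\n}$, arguing as in the proof of Proposition \ref{P}(iii). First, $\gamma_\n$ is proper: $\gamma=\eta\circ\gamma_\n$ is proper because $G/\mathbf{B}$ is projective, and $\eta$ is finite hence separated, so $\gamma_\n$ is proper by cancellation; it is birational because $\gamma$ and the normalization $\eta$ are, and it is surjective, being dominant with closed image. Let $\Gamma$ be the graph of $f$ in $(G\times_\mathbf{B}\mathfrak{b}\times\mathfrak{b})\times\Bbbk$ and let $\tilde\Gamma$ be its image under $\gamma_\n\times 1_\Bbbk$. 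Since $\gamma_\n\times 1_\Bbbk$ is proper, $\tilde\Gamma$ is closed; since $f$ is constant on the fibres of $\gamma_\n$, $\tilde\Gamma$ is the graph of a well-defined function $\Phi(P)$ on ${\mathcal B}_{\mathfrak{g},\n}$. The projection $\chi$ of $\tilde\Gamma$ onto ${\mathcal B}_{\mathfrak{g},\n}$ is bijective and birational, so by Zariski's main theorem it is an isomorphism, ${\mathcal B}_{\mathfrak{g},\n}$ being normal; hence $\Phi(P)$ is regular and satisfies $\Phi(P)\circ\gamma_\n=f=P\circ\tau$. Equivalently, one may observe that a proper birational morphism onto a normal variety satisfies $(\gamma_\n)_\ast\mathcal{O}=\mathcal{O}$, so that the comorphism $\gamma_\n^{\ast}\colon\Bbbk[{\mathcal B}_{\mathfrak{g},\n}]\to\Bbbk[G\times_\mathbf{B}\mathfrak{b}\times\mathfrak{b}]$ is an isomorphism and $\Phi(P):=(\gamma_\n^{\ast})^{-1}(P\circ\tau)$.

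The required properties are then immediate. The map $P\mapsto P\circ\tau$ is an algebra homomorphism and $\gamma_\n^{\ast}$ is injective, so $\Phi$ is an algebra homomorphism. As $\tau$ is $G$-invariant, $f$ is $G$-invariant, and since $\gamma_\n$ is $G$-equivariant, $\Phi(P)$ is $G$-invariant, i.e. $\Phi(P)$ lies in $\Bbbk[{\mathcal B}_{\mathfrak{g},\n}]^G$. Finally, $\iota_\n=\gamma_\n\circ i$ with $i$ the canonical injection of $\mathfrak{h}\times\mathfrak{h}$, and for $(x,y)$ in $\mathfrak{h}\times\mathfrak{h}$ we have $\tau(i(x,y))=\tau(\overline{(e,x,y)})=(x_0,y_0)=(x,y)$; therefore $\Phi(P)\circ\iota_\n=(P\circ\tau)\circ i=P$.

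The delicate point, and the one I expect to be the main obstacle, is uniqueness. It does not follow from $G$-invariance and agreement on $\iota_\n(\mathfrak{h}\times\mathfrak{h})$ by a density argument, because the $G$-saturation $G\cdot\iota_\n(\mathfrak{h}\times\mathfrak{h})$ is not dense in ${\mathcal B}_{\mathfrak{g},\n}$: a dimension count gives $\dim G\cdot(\mathfrak{h}\times\mathfrak{h})=2b_\mathfrak{g}$, which is strictly less than $\dim{\mathcal B}_{\mathfrak{g}}=3b_\mathfrak{g}-\rk\mathfrak{g}$ whenever $\rk\mathfrak{g}<b_\mathfrak{g}$. I would instead use the one-parameter subgroup $h(t)$ of the Notations. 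Let $F$ in $\Bbbk[{\mathcal B}_{\mathfrak{g},\n}]^G$ satisfy $F\circ\iota_\n=P$ and put $\tilde F:=F\circ\gamma_\n$. For $t\neq 0$ the element $h(t)$ belongs to $\mathbf{B}\subseteq G$, so $\overline{(e,h(t)(x),h(t)(y))}=\overline{(h(t),x,y)}=h(t).\overline{(e,x,y)}$, and the $G$-invariance of $\tilde F$ gives $\tilde F(\overline{(e,h(t)(x),h(t)(y))})=\tilde F(\overline{(e,x,y)})$. Thus $t\mapsto\tilde F(\overline{(e,h(t)(x),h(t)(y))})$ is a regular function on $\Bbbk^{\ast}$ that is constant; since $\lim_{t\to 0}h(t)(x)=x_0$ it extends regularly to $t=0$ with value $\tilde F(\overline{(e,x_0,y_0)})$, so $\tilde F(\overline{(e,x,y)})=\tilde F(\overline{(e,x_0,y_0)})=F(\iota_\n(x_0,y_0))=P(x_0,y_0)$. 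Using $G$-invariance once more, $\tilde F(\overline{(g,x,y)})=P(x_0,y_0)=(P\circ\tau)(\overline{(g,x,y)})$, whence $F\circ\gamma_\n=P\circ\tau$; the injectivity of $\gamma_\n^{\ast}$ forces $F=\Phi(P)$, proving uniqueness.
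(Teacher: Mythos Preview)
Your existence argument is essentially the paper's: pull $P$ back along $\tau$, use Lemma~\ref{L2.2} to see that $P\circ\tau$ is constant on the fibres of $\gamma$ (hence of $\gamma_\n$), push the graph forward by $\gamma_\n\times 1_\Bbbk$, and invoke Zariski's main theorem on the normal target ${\mathcal B}_{\mathfrak{g},\n}$ to conclude regularity; then check $G$-invariance and the identity $\Phi(P)\circ\iota_\n=P$ via $\tau\circ i=\mathrm{id}_{\mathfrak{h}\times\mathfrak{h}}$. Your extra care in verifying that $\gamma_\n$ is proper, and your alternative descent via $(\gamma_\n)_\ast\mathcal{O}=\mathcal{O}$, are welcome embellishments but not a different method.

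Where you go beyond the paper is in the treatment of uniqueness. The paper's proof simply constructs $\Phi$ and checks the defining identity; the phrase ``uniquely defined'' in the statement is most plausibly read as ``well defined'' rather than as a genuine uniqueness claim, and indeed no uniqueness argument appears there. Your observation that $G\cdot\iota_\n(\mathfrak{h}\times\mathfrak{h})$ is not dense, so uniqueness is not automatic, is well taken, and your remedy via the one-parameter subgroup $h(t)$ is correct: for any $G$-invariant $F$ with $F\circ\iota_\n=P$ you deform $\overline{(e,x,y)}$ to $\overline{(e,x_0,y_0)}$ along $t\mapsto h(t)$ inside $G\times_\mathbf{B}\mathfrak{b}\times\mathfrak{b}$, use $G$-invariance to see the value is constant along the way, and conclude $F\circ\gamma_\n=P\circ\tau$. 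This is exactly the mechanism the paper exploits later (in the proof of Proposition~\ref{P4}) to show that every $G$-orbit closure in $\mathcal{B}_\mathfrak{g}$ meets $\mathfrak{h}\times\mathfrak{h}$; you have transported that idea to the resolution and thereby supplied a clean uniqueness statement the paper leaves implicit.
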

\begin{proof} Let $\mathrm{P}$ be in $S(\mathfrak{h}\times\mathfrak{h})^W$, let $\Gamma$ be the graph of $\mathrm{P}\circ\tau$ and let $\Gamma'$ be the image of $\Gamma$ by $\gamma_\n\times id_\Bbbk$.
We want to prove that $\Gamma'$ is a graph of an element $\mathrm{Q}$ in $\Bbbk[{\mathcal B}_{\mathfrak{g},\n}]^G$. Let $(x,t)$ and $(x,t')$ be in $\Gamma'$. Let $z$ and $z'$ be in $G\times_\mathbf{B}\mathfrak{b}\times\mathfrak{b}$, such that $$(z,t)\mbox{, }(z',t')\in\Gamma\mbox{ and }\gamma_\n(z)=\gamma_\n(z')=x.$$ By lemma \ref{L2.2}, there exists $w$ in $W$ such that $\tau(z')=w.\tau(z)$ hence $$\mathrm{P}\circ\tau(z')=\mathrm{P}\circ\tau(z)\mbox{ and }t= t'.$$ Then $\Gamma'$ is a graph of a fonction $\mathrm{Q}$ on ${\mathcal B}_{\mathfrak{g},\n}$. Since $\Gamma$  and $\gamma_\n$ are closed, $\Gamma'$ is closed too. Let $\theta$ be the projection of $\Gamma'$ on ${\mathcal B}_{\mathfrak{g},\n}$. Since $\theta$ is bijective and since ${\mathcal B}_{\mathfrak{g},\n}$ is normal then by Zariski's main theorem $\theta$ is an isomorphism and hence $\mathrm{Q}$ is a regular fonction on ${\mathcal B}_{\mathfrak{g},\n}$. Since $\gamma_\n$ and $\Gamma$ are $G$-invariant, $\Gamma'$ is $G$-invariant and $\mathrm{Q}$ is $G$-invariant, then $\mathrm{Q}$ is in $\k[{\mathcal B}_{\mathfrak{g},\n}]^G$. 
Hence we have a morphism of algebra $\Phi$ from $S(\mathfrak{h}\times\mathfrak{h})^W$ to $\Bbbk[{\mathcal B}_{\mathfrak{g},\n}]^G$ such that $$\Phi(\mathrm{P})\circ\gamma_\n=\mathrm{P}\circ\tau.$$
Then, for $\mathrm{P}$ in $S(\mathfrak{h}\times\mathfrak{h})^W$,
\begin{eqnarray*}
\Phi(\mathrm{P})\circ\iota_\n&=&\Phi(\mathrm{P})\circ\gamma_\n\circ i\\&=&\mathrm{P}\circ\tau\circ i\\&=&\mathrm{P},
\end{eqnarray*}
since $\tau\circ i=id_{\mathfrak{h}\times\mathfrak{h}}$.
\end{proof}

\begin{prop}\label{P4}The morphism $\Phi$ is an isomorphism from $S(\mathfrak{h}\times\mathfrak{h})^W$ onto $\k[\mathcal{B}_\mathfrak{g}]^G$.
\end{prop}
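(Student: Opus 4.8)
The plan is to show that $\Phi$ is injective and that its image is exactly $\k[\mathcal{B}_\mathfrak{g}]^G$, embedded in $\k[{\mathcal B}_{\mathfrak{g},\n}]^G$ through the comorphism of $\eta$. Injectivity is immediate from Proposition \ref{PSIBn}: since $\Phi(\mathrm{P})\circ\iota_\n=\mathrm{P}$ and $\iota_\n$ is a closed immersion, $\Phi(\mathrm{P})=0$ forces $\mathrm{P}=0$. The whole content is thus surjectivity, together with the implicit assertion that every $\Phi(\mathrm{P})$ is a regular function on $\mathcal{B}_\mathfrak{g}$ and not merely on its normalization.

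First I would introduce the restriction map $r\colon f\mapsto f\circ\iota$ from $\k[\mathcal{B}_\mathfrak{g}]^G$ to $S(\mathfrak{h}\times\mathfrak{h})$ and check it lands in $S(\mathfrak{h}\times\mathfrak{h})^W$: for $w$ in $W$ with representative $g_w$ in $N_G(\mathfrak{h})$ and $(x,y)$ in $\mathfrak{h}\times\mathfrak{h}$, both $(x,y)$ and $(w(x),w(y))=(g_w(x),g_w(y))$ lie in $\mathcal{B}_\mathfrak{g}$, so $G$-invariance gives $f(w(x),w(y))=f(x,y)$. The essential tool is the one-parameter subgroup $h(t)\subset\mathbf{B}\subset G$ of the Notations: for $(x,y)$ in $\mathfrak{b}\times\mathfrak{b}$ the pair $h(t)(x,y)$ remains in $\mathfrak{b}\times\mathfrak{b}$ and tends to $(x_0,y_0)$ as $t\to0$, so by $G$-invariance and continuity $f(x,y)=f(x_0,y_0)=r(f)(x_0,y_0)$. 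Since every point of $\mathcal{B}_\mathfrak{g}$ is $G$-conjugate into $\mathfrak{b}\times\mathfrak{b}$, the function $f$ is completely determined by $r(f)$; in particular $r$ is injective.

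Next I would compare $\Phi$ and $r$ on the desingularization. For $f$ in $\k[\mathcal{B}_\mathfrak{g}]^G$ one has $(f\circ\eta)\circ\gamma_\n=f\circ\gamma$, and evaluating at $\overline{(g,x,y)}$ the contraction of the previous step gives $f(\gamma(\overline{(g,x,y)}))=f(x_0,y_0)=r(f)(x_0,y_0)=(r(f)\circ\tau)(\overline{(g,x,y)})=(\Phi(r(f))\circ\gamma_\n)(\overline{(g,x,y)})$, using $\Phi(\mathrm{P})\circ\gamma_\n=\mathrm{P}\circ\tau$ from Proposition \ref{PSIBn}. Since $\gamma_\n$ is dominant this forces $f\circ\eta=\Phi(r(f))$ in $\k[{\mathcal B}_{\mathfrak{g},\n}]^G$. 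As the comorphism of $\eta$ is injective, this both reconfirms that $r$ is injective and shows that, under the identification of $\k[\mathcal{B}_\mathfrak{g}]^G$ with its image, every $f$ equals $\Phi(r(f))$; hence the image of $\Phi$ contains $\k[\mathcal{B}_\mathfrak{g}]^G$.

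The hard part is the reverse inclusion: that $\Phi(\mathrm{P})$ descends to a \emph{regular} function on $\mathcal{B}_\mathfrak{g}$ for \emph{every} $\mathrm{P}$ in $S(\mathfrak{h}\times\mathfrak{h})^W$, equivalently that $r$ is onto, equivalently $S(\mathfrak{h}\times\mathfrak{h})^W\subseteq\k[\mathcal{B}_\mathfrak{g}]^G$. This is genuinely delicate: $\mathcal{B}_\mathfrak{g}$ is not normal, so the constancy of $\Phi(\mathrm{P})$ along the fibres of $\eta$ does not by itself give regularity downstairs; and by the Remark following Proposition \ref{P} the polarizations span only a proper subalgebra of $S(\mathfrak{h}\times\mathfrak{h})^W$ for $\mathfrak{g}$ of type $D_n$ with $n$ even, so the missing invariants are not restrictions of invariants on $\mathfrak{g}\times\mathfrak{g}$. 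Here I would bring in the commuting variety $\mathcal{C}_\mathfrak{g}\subset\mathcal{B}_\mathfrak{g}$, which also contains $\mathfrak{h}\times\mathfrak{h}$ and is preserved by the flow $h(t)$ since $[h(t)(x),h(t)(y)]=h(t)([x,y])$. By A.~Joseph's theorem (\cite{23}) restriction to $\mathfrak{h}\times\mathfrak{h}$ identifies $\k[\mathcal{C}_\mathfrak{g}]^G$ with $S(\mathfrak{h}\times\mathfrak{h})^W$, so each $W$-invariant $\mathrm{P}$ is realized by a bona fide regular $G$-invariant function on $\mathcal{C}_\mathfrak{g}$; consequently the surjectivity of $r$ reduces to that of the restriction $\k[\mathcal{B}_\mathfrak{g}]^G\to\k[\mathcal{C}_\mathfrak{g}]^G$. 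Establishing this last surjectivity—extending a $G$-invariant regular function from $\mathcal{C}_\mathfrak{g}$ to $\mathcal{B}_\mathfrak{g}$ across the non-normal locus—is the main obstacle, and it is exactly the step for which Joseph's computation, rather than the polarizations, is indispensable. Once it is secured, $r$ is bijective, $\Phi=r^{-1}$, and $\Phi$ is the desired isomorphism onto $\k[\mathcal{B}_\mathfrak{g}]^G$.
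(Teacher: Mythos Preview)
Your outline tracks the paper's argument closely: the restriction map $r\colon\k[\mathcal{B}_\mathfrak{g}]^G\to S(\mathfrak{h}\times\mathfrak{h})^W$, its injectivity via the $h(t)$-contraction, and the reduction of surjectivity to the surjectivity of $\k[\mathcal{B}_\mathfrak{g}]^G\to\k[\mathcal{C}_\mathfrak{g}]^G$ combined with Joseph's isomorphism $\k[\mathcal{C}_\mathfrak{g}]^G\cong S(\mathfrak{h}\times\mathfrak{h})^W$. But you then stop precisely at the step you call ``the main obstacle'' and leave it unproven. That is the gap.

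The missing observation is elementary and has nothing to do with Joseph's theorem or with extending across a non-normal locus: since $G$ is reductive and $\mathcal{B}_\mathfrak{g}$, $\mathcal{C}_\mathfrak{g}$ are closed $G$-stable subvarieties of $\mathfrak{g}\times\mathfrak{g}$, the restriction maps $S(\mathfrak{g}\times\mathfrak{g})^G\to\k[\mathcal{B}_\mathfrak{g}]^G$ and $S(\mathfrak{g}\times\mathfrak{g})^G\to\k[\mathcal{C}_\mathfrak{g}]^G$ are both surjective (applying the Reynolds operator to any regular extension). The second of these factors through the first via $\k[\mathcal{B}_\mathfrak{g}]^G\to\k[\mathcal{C}_\mathfrak{g}]^G$, which is therefore surjective. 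This also shows that your inference from the Remark after Proposition~\ref{P} is incorrect: that remark says the \emph{polarizations} of the $p_i$ do not span $S(\mathfrak{h}\times\mathfrak{h})^W$ in type $D_n$, but $S(\mathfrak{g}\times\mathfrak{g})^G$ is much larger than the subalgebra generated by polarizations, and in fact every $W$-invariant on $\mathfrak{h}\times\mathfrak{h}$ \emph{is} the restriction of some $G$-invariant on $\mathfrak{g}\times\mathfrak{g}$, by exactly the argument just given together with Joseph's theorem. Once you insert this one line, your proof is complete and coincides with the paper's.
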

\begin{proof} There are canonical morphisms $$\k[\mathcal{B}_\mathfrak{g}]^G\longrightarrow\k[\mathcal{C}_\mathfrak{g}]^G\mbox{,   }\k[\mathcal{C}_\mathfrak{g}]^G\longrightarrow S(\mathfrak{h}\times\mathfrak{h})^W,\atop S(\mathfrak{g}\times\mathfrak{g})^G\longrightarrow\k[\mathcal{B}_\mathfrak{g}]^G\mbox{ and }S(\mathfrak{g}\times\mathfrak{g})^G\longrightarrow\k[\mathcal{C}_\mathfrak{g}]^G\,$$ given by restrictions. Since $G$ is a reductive group, the two last arrows are surjective. So the first one is surjective and the second one is an isomorphism by (\cite{23}, Theorem 2.9). For all $(x,y)$ in $\mathfrak{b}\times\mathfrak{b}$, the intersection of the closure of $\mathbf{B}.(x,y)$ and $\mathfrak{h}\times\mathfrak{h}$ is nonempty. Indeed, according to Section 2, $$\lim\limits_{t\to0}h(t)(x,y)=(x_0,y_0).$$ So any $G$-orbit in $\mathcal{B}$ has a nonempty intersection with $\mathcal{C}_\mathfrak{g}$. As a consequence, the first arrow is an isomorphism since $G.(\mathfrak{h}\times\mathfrak{h})$ is dense in $\mathcal{C}_\mathfrak{g}$ by \cite{24}. Then the compound of the two first arrows is an isomorphism whose inverse is $\Phi$.
\end{proof}

\section{On the nullcone}

Let $\mathfrak{N}_\mathfrak{g}$ be the nilpotent cone and let $\mathfrak{N}'_\mathfrak{g}$ be the set of its regular points. We denote by $\mathcal{N}$ the set of $(x,y)$ in $\mathcal{B}_\mathfrak{g}$ such that $x$ and $y$ are nilpotents and $\mathcal{N}'$ the set of its smooth points. Let $\vartheta$ be the restriction of $\gamma$ to $G\times_\mathbf{B}\mathfrak{u}\times\mathfrak{u}$. Then $\mathcal{N}$ is the image of $G\times_\mathbf{B}\mathfrak{u}\times\mathfrak{u}$ by $\vartheta$.

\begin{prop} The morphism $\vartheta$ is a desingularization of $\mathcal{N}$. The variety $\mathcal{N}$ is closed and irreducible of dimension $3(b_\mathfrak{g}-\mathrm{rk}\mathfrak{g})$.
\end{prop}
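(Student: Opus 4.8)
The plan is to follow closely the argument already given for $\gamma$ and $\mathcal{B}_\mathfrak{g}$, adapting only the birationality step to the nilpotent setting. First I would record the structure of the source. Since $\mathfrak{u}\times\mathfrak{u}$ is a $\mathbf{B}$-stable linear subspace of $\mathfrak{b}\times\mathfrak{b}$, the space $G\times_\mathbf{B}\mathfrak{u}\times\mathfrak{u}$ is a vector bundle over $G/\mathbf{B}$ with fibre $\mathfrak{u}\times\mathfrak{u}$, hence smooth and irreducible, of dimension $\dim(G/\mathbf{B})+2\dim\mathfrak{u}=3(b_\mathfrak{g}-\mathrm{rk}\mathfrak{g})$, using that both $\dim G/\mathbf{B}$ and $\dim\mathfrak{u}$ equal the number of positive roots $b_\mathfrak{g}-\mathrm{rk}\mathfrak{g}$. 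Because $\mathcal{N}$ is the image of this irreducible variety under $\vartheta$, it is irreducible.

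For closedness and properness I would observe that $G\times_\mathbf{B}\mathfrak{u}\times\mathfrak{u}$ is a closed subvariety of $G\times_\mathbf{B}\mathfrak{b}\times\mathfrak{b}$ and that $\vartheta$ is precisely the restriction of $\gamma$ to it. As $\gamma$ was already shown to be proper (via the identification with $\tilde{\mathcal{B}}$ and the projectivity of $G/\mathbf{B}$), its restriction $\vartheta$ to a closed subvariety is again proper, so its image $\mathcal{N}$ is closed.

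The crux is the birationality of $\vartheta$, and here I would \emph{not} reuse the open set $\Omega_\mathfrak{g}$ from the $\mathcal{B}_\mathfrak{g}$ case: for $(x,y)\in\mathcal{N}$ both coordinates lie in a common $\mathfrak{u}$, so the condition $\dim P_{x,y}=2$ with $P_{x,y}\setminus\{0\}\subset\mathfrak{g}'$ can fail identically (already in type $A_1$, where $\mathfrak{u}$ is one-dimensional), and $\Omega_\mathfrak{g}\cap\mathcal{N}$ may be empty. Instead I would exploit that a regular nilpotent element lies in a unique Borel subalgebra. Consider the open subset $\mathcal{N}^\circ:=\{(x,y)\in\mathcal{N}\mid x\in\mathfrak{N}'_\mathfrak{g}\}$, the preimage under the first projection of the open dense regular locus $\mathfrak{N}'_\mathfrak{g}$; it is nonempty, as one may take $x$ a regular nilpotent element of $\mathfrak{u}$ and $y$ arbitrary in $\mathfrak{u}$. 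For $(x,y)\in\mathcal{N}^\circ$, if $\vartheta(\overline{(g,u,v)})=(x,y)$ then $x=g(u)\in g(\mathfrak{u})$, so $g(\mathfrak{b})$ is a Borel subalgebra whose nilradical contains the regular nilpotent $x$; by uniqueness of such a Borel the coset $g\mathbf{B}$ is determined, whence $u=g^{-1}(x)$ and $v=g^{-1}(y)$ are determined and the fibre $\vartheta^{-1}(x,y)$ is a single point. Thus $\vartheta$ is generically injective, hence, in characteristic zero and between irreducible varieties, birational.

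Finally, birationality yields $\dim\mathcal{N}=\dim(G\times_\mathbf{B}\mathfrak{u}\times\mathfrak{u})=3(b_\mathfrak{g}-\mathrm{rk}\mathfrak{g})$, and together with smoothness of the source and properness of $\vartheta$ this shows $\vartheta$ is a desingularization of $\mathcal{N}$. The main obstacle is isolating the correct dense open locus for birationality; the key input is the classical fact that the Springer fibre over a regular nilpotent element is a single point, i.e. a regular nilpotent lies in exactly one Borel subalgebra, which replaces the $V_{x,y}$ and $\Omega_\mathfrak{g}$ uniqueness mechanism used for $\mathcal{B}_\mathfrak{g}$.
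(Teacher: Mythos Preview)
Your proof is correct and follows essentially the same approach as the paper: smoothness via the vector bundle structure, properness via the embedding in $\tilde{\mathcal{B}}$ (you phrase it as restriction of the already-proper $\gamma$, which amounts to the same thing), and birationality via the fact that a regular nilpotent lies in a unique Borel subalgebra. The paper uses the slightly smaller open set $(\mathfrak{N}'_\mathfrak{g}\times\mathfrak{N}'_\mathfrak{g})\cap\mathcal{N}$ rather than your $\mathcal{N}^\circ=\{(x,y)\in\mathcal{N}\mid x\in\mathfrak{N}'_\mathfrak{g}\}$, but only the regularity of $x$ is actually used, so your choice is equally good; your remark explaining why $\Omega_\mathfrak{g}$ cannot be reused here is a helpful addition.
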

\begin{proof} Since $G\times_\mathbf{B}\mathfrak{u}\times\mathfrak{u}$ is a vector bundle on the smooth variety $G/\mathbf{B}$, it is a smooth variety. There exists an embedding $\epsilon$ from $G\times_\mathbf{B}\mathfrak{u}\times\mathfrak{u}$ into $\tilde{\mathcal{B}}$ such that $\vartheta$ is the compound of $\epsilon$ and the canonical projection of $\tilde{\mathcal{B}}$ on $\mathfrak{g}\times\mathfrak{g}$. Whence $\vartheta$ is a proper morphism since $G/\mathbf{B}$ is a projective variety.\\
The set $\mathfrak{N}'_\mathfrak{g}\times\mathfrak{N}'_\mathfrak{g}\cap\mathcal{N}$ is an open subset of $\mathcal{N}$. Let $(x,y)$ be in $\mathfrak{N}'_\mathfrak{g}\times\mathfrak{N}'_\mathfrak{g}\cap\mathcal{N}$. Since $x$ is regular, there exists a unique Borel subalgebra containing $x$, so $\vartheta^{-1}(x,y)$ has a unique point. Hence $\vartheta$ is birational.\\ Since $\mathcal{N}$ is the image of $G\times_\mathbf{B}\mathfrak{u}\times\mathfrak{u}$ and since $G/\mathbf{B}$ is projective variety, $\mathcal{N}$ is closed. Since $G\times_\mathbf{B}\mathfrak{u}\times\mathfrak{u}$ is irreducible, $\mathcal{N}$ is irreducible. Since $\vartheta$ is birational,
\begin{eqnarray*}
\dim \mathcal{N}&=&\dim(G\times_\mathbf{B}\mathfrak{u}\times\mathfrak{u})\\&=&3(b_\mathfrak{g}-\mathrm{rk}\mathfrak{g}).
\end{eqnarray*}
\end{proof}
 
\begin{Lemme}\label{L4.1} Let $(x,y)$ be an element in $\mathcal{N}$ and let $(v,w)$ be in $T_{(x,y)}\mathcal{N}$. Then $v+tw$ is contained in $T_{x+ty}\mathfrak{N}$, for all $t$ in $\Bbbk$.
\end{Lemme}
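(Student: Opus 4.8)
The plan is to describe both tangent spaces through the vanishing of the invariant polynomials and their polarizations, and then to relate the two by differentiating the polarization identity. Write $\mathfrak{N}:=\mathfrak{N}_\mathfrak{g}$ for the nilpotent cone. Recall first that it is classical (Kostant) that the ideal of $\mathfrak{N}$ in $S(\mathfrak{g})$ is generated by the basic invariants $p_1,\ldots,p_{\rk\mathfrak{g}}$; consequently, for any $z$ in $\mathfrak{N}$ the Zariski tangent space is the common kernel of the differentials of these generators, namely
$$T_z\mathfrak{N}=\{u\in\mathfrak{g}\mid p_i'(z)(u)=0,\ i=1,\ldots,\rk\mathfrak{g}\}=\{u\in\mathfrak{g}\mid\<\varepsilon_i(z),u\>=0,\ i=1,\ldots,\rk\mathfrak{g}\}.$$
So it will suffice to check that $p_i'(x+ty)(v+tw)=0$ for every $i$ and every $t$ in $\Bbbk$.

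First I would record how membership in $\mathcal{N}$ forces the polarizations to vanish. If $(x,y)$ lies in $\mathcal{N}$, then $x$ and $y$ belong to the nilradical $\mathfrak{u}'$ of a common Borel subalgebra; since $\mathfrak{u}'$ is a linear subspace consisting of nilpotent elements, $x+ty$ is nilpotent for every $t$, whence $p_i(x+ty)=0$ for all $i$ and all $t$. Expanding by the polarization identity $p_i(x+ty)=\sum_{n=0}^{d_i}t^n p_i^{(n)}(x,y)$ and comparing coefficients of the powers of $t$ shows $p_i^{(n)}(x,y)=0$. As this holds at every point of $\mathcal{N}$, each polarization $p_i^{(n)}$ vanishes identically on $\mathcal{N}$, so its differential annihilates the tangent space: for the given $(v,w)\in T_{(x,y)}\mathcal{N}$ one has $(p_i^{(n)})'(x,y)(v,w)=0$ for all $i$ and $n$.

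The decisive step is then to differentiate the polarization identity itself with respect to the base point. Regarding $(a,b)\mapsto p_i(a+tb)$ as a regular function on $\mathfrak{g}\times\mathfrak{g}$, with $t$ a fixed scalar, and applying the chain rule to the linear map $(a,b)\mapsto a+tb$, its differential at $(x,y)$ in the direction $(v,w)$ equals $p_i'(x+ty)(v+tw)$; differentiating the right-hand side $\sum_{n=0}^{d_i}t^n p_i^{(n)}(a,b)$ term by term gives $\sum_{n=0}^{d_i}t^n (p_i^{(n)})'(x,y)(v,w)$. Combining the two computations yields
$$p_i'(x+ty)(v+tw)=\sum_{n=0}^{d_i}t^n\,(p_i^{(n)})'(x,y)(v,w),$$
whose right-hand side vanishes by the previous paragraph. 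Hence $p_i'(x+ty)(v+tw)=0$ for every $i$ and every $t$, which is exactly the condition for $v+tw$ to lie in $T_{x+ty}\mathfrak{N}$.

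I expect the only point requiring genuine care to be the justification that $T_z\mathfrak{N}$ is cut out \emph{exactly} by the differentials $p_i'(z)$ — that is, the appeal to the fact that the $p_i$ generate the full ideal of the reduced nilpotent cone rather than merely defining it set-theoretically — since without this the containment could fail along the singular locus of $\mathfrak{N}$. The chain-rule differentiation of the polarization identity is routine, and the passage from $(x,y)\in\mathcal{N}$ to the vanishing of the polarizations, and hence of their differentials on $T_{(x,y)}\mathcal{N}$, is immediate once one observes that the whole line $\{x+ty\}$ consists of nilpotent elements.
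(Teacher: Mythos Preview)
Your proof is correct and follows essentially the same route as the paper: reduce to showing $p_i'(x+ty)(v+tw)=0$, differentiate the polarization identity $p_i(a+tb)=\sum_n t^n p_i^{(n)}(a,b)$ at $(x,y)$ in the direction $(v,w)$, and use that each $p_i^{(n)}$ vanishes on $\mathcal{N}$ (equivalently, $\mathcal{N}\subset\sigma^{-1}(0)$) so that its differential kills $T_{(x,y)}\mathcal{N}$. The only difference is expository: you make explicit the appeal to Kostant's theorem that the $p_i$ generate the \emph{ideal} of $\mathfrak{N}$ (so that the $p_i'(z)$ cut out $T_z\mathfrak{N}$ exactly, even at singular points), whereas the paper simply writes ``since $\mathfrak{N}=\pi^{-1}(\{0\})$'' and leaves this implicit.
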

\begin{proof} Let $p'_i$ be the differential of $p_i$ for $i=1,\ldots,\mathrm{rk}\mathfrak{g}$. Since $\mathfrak{N}=\pi^{-1}({0})$, it suffices to show that $p'_i(x+ty)(v+tw)=0$ for all $i=1,\ldots,\mathrm{rk}\mathfrak{g}$. We have 
\begin{eqnarray*}
p'_i(x+ty)(v+tw)&=&\frac d{da}p_i(x+ty+a(v+tw))|_{a=0}\\&=&\frac d{da}p_i(x+av+t(y+aw))|_{a=0}\\&=&\sum\limits_{m=0}^{d_i}t^m\frac d{da}p_i^{(m)}(x+av,y+aw)|_{a=0}.
\end{eqnarray*}
Since $(v,w)$ is in $T_{(x,y)}\mathcal{N}$ and since $\mathcal{N}\subset\sigma^{-1}(\{0\})$, $$\textstyle\frac d{da}p_i^{(m)}(x+av,y+aw)|_{a=0}=0,\forall m=0,\ldots,d_i-1\mbox{ and }i=1,\ldots,\mathrm{rk}\mathfrak{g}.$$ Then $$p'_i(x+ty)(v+tw)=0,\forall i=1,\ldots,\mathrm{rk}\mathfrak{g},$$whence the lemma.
\end{proof}

Let $t$ be in $\k$ and let the morphisms 
\begin{eqnarray*}
\theta'&:&G\times\mathfrak{u}\longrightarrow\mathfrak{N}\\& &(g,u)\mapsto g(u)\\ \rho&:&G\times\mathfrak{u}\times\mathfrak{u}\longrightarrow G\times\mathfrak{u}\\& &(g,u,u')\mapsto(g,u+tu')\\
\tau&:&\mathcal{N}\longrightarrow\mathfrak{N}\\& &\tau(x,y)\mapsto x+ty.
\end{eqnarray*}
Let $\vartheta'$ be the morphism from $G\times_\mathbf{B}\mathfrak{u}$ to $\mathfrak{N}$ defined through the quotient by $\theta'$ and let $\tau'$ be the morphism from $G\times_\mathbf{B}\mathfrak{u}\times\mathfrak{u}$ to $G\times_\mathbf{B}\mathfrak{u}$ defined through the quotient by $\rho$. Then we have the commutative diagram:
$$ \xymatrix{
      G\times_\mathbf{B}\mathfrak{u}\times\mathfrak{u}\ar[r]^{\vartheta} \ar[d]_{\tau'} & \mathcal{N} \ar[d]^{\tau}\\
      G\times_\mathbf{B}\mathfrak{u}\ar[r]_{\vartheta'}&\mathfrak{N}.
       }
  $$

\begin{Lemme}\label{L4.4} Let $(x,y)$ be in $\mathcal{N}$. If the intersection of $P_{x,y}$ and $\mathfrak{g}'$ is not empty, then $(x,y)$ is regular in $\mathcal{N}$.
\end{Lemme}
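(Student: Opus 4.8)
The plan is to show that $(x,y)$ has a neighbourhood in $\mathcal{N}$ isomorphic to an open subset of the smooth variety $G\times_\mathbf{B}\mathfrak{u}\times\mathfrak{u}$, via the desingularization $\vartheta$, so that $(x,y)$ is a smooth point. First I would use the hypothesis to pick a good value of the parameter $t$ in the morphism $\tau$. Since $P_{x,y}$ meets $\mathfrak{g}'$, there are scalars $a,b$ with $ax+by$ regular; as regularity is invariant under non-zero scaling and under the exchange $(x,y)\mapsto(y,x)$ (an automorphism of $\mathcal{N}$ preserving its smooth locus), I may assume $a\neq 0$ and, after rescaling, fix $t_0\in\Bbbk$ with $z_0:=x+t_0y$ regular. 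Because $x$ and $y$ lie in a common Borel subalgebra and the nilpotent elements of a Borel subalgebra are exactly those of its nilradical, $x$ and $y$ lie in the nilradical, hence so does $z_0$; thus $z_0$ is a \emph{regular nilpotent} element.

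The key geometric input is that a regular nilpotent element lies in a unique Borel subalgebra, equivalently that the Springer morphism $\vartheta'\colon G\times_\mathbf{B}\mathfrak{u}\to\mathfrak{N}$ is an isomorphism over the open set $\mathfrak{N}'_\mathfrak{g}$ of regular nilpotent elements. Consequently any Borel subalgebra containing $x$ and $y$ contains $z_0$, hence equals the unique Borel subalgebra determined by $z_0$; in particular $\vartheta^{-1}(x,y)$ is a single point. Set $\mathcal{N}^{\circ}:=\tau^{-1}(\mathfrak{N}'_\mathfrak{g})$, an open subset of $\mathcal{N}$ containing $(x,y)$ since $\tau(x,y)=z_0$. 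Using the commutative square relating $\vartheta,\tau,\vartheta',\tau'$ one checks $\vartheta^{-1}(\mathcal{N}^{\circ})=(\tau')^{-1}\bigl((\vartheta')^{-1}(\mathfrak{N}'_\mathfrak{g})\bigr)$. I would then define a morphism $\psi\colon\mathcal{N}^{\circ}\to G\times_\mathbf{B}\mathfrak{u}\times\mathfrak{u}$ sending $(x',y')$ to the point determined by the Borel subalgebra $(\vartheta')^{-1}(\tau(x',y'))$ together with $x'$ and $y'$ (both of which lie in its nilradical); this is a morphism precisely because $(\vartheta')^{-1}$ is one on $\mathfrak{N}'_\mathfrak{g}$. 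A direct computation gives $\vartheta\circ\psi=\mathrm{id}_{\mathcal{N}^{\circ}}$, and since $\vartheta$ is injective over $\mathcal{N}^{\circ}$ also $\psi\circ\vartheta=\mathrm{id}$ on $\vartheta^{-1}(\mathcal{N}^{\circ})$, so $\vartheta$ restricts to an isomorphism $\vartheta^{-1}(\mathcal{N}^{\circ})\xrightarrow{\sim}\mathcal{N}^{\circ}$.

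Since $\vartheta^{-1}(\mathcal{N}^{\circ})$ is open in the smooth variety $G\times_\mathbf{B}\mathfrak{u}\times\mathfrak{u}$ it is smooth, hence so is $\mathcal{N}^{\circ}$; as $(x,y)\in\mathcal{N}^{\circ}$, the point $(x,y)$ is regular in $\mathcal{N}$. I expect the main obstacle to be the rigorous construction of $\psi$ as a morphism, i.e. verifying that the unique Borel subalgebra of a regular nilpotent element depends algebraically on the point, which is exactly the statement that $\vartheta'$ is an isomorphism over $\mathfrak{N}'_\mathfrak{g}$; this rests on the properness and birationality of the Springer resolution together with the normality of $\mathfrak{N}$ (Zariski's main theorem). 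A secondary point is the reduction to a finite parameter $t_0$, namely the case in which the regular elements of $P_{x,y}$ are only the non-zero multiples of $y$, handled by the symmetry $(x,y)\mapsto(y,x)$ and the choice $t_0=0$. Note that Lemma \ref{L4.1} alone does not suffice: at $(x,0)$ with $x$ regular it only bounds the tangent space by $T_x\mathfrak{N}\times T_x\mathfrak{N}$, which has dimension $4(b_\mathfrak{g}-\rk\mathfrak{g})$ and is strictly too large, so the desingularization argument is essential.
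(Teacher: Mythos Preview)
Your proof is correct and follows a genuinely different route from the paper. Rather than bounding the dimension of the Zariski tangent space $T_{(x,y)}\mathcal{N}$, you show directly that the desingularization $\vartheta$ restricts to an isomorphism over the open set $\mathcal{N}^\circ=\tau^{-1}(\mathfrak{N}'_\mathfrak{g})$ by building an explicit inverse out of the inverse of the Springer resolution over the regular nilpotent locus; smoothness of $\mathcal{N}^\circ$ then follows immediately. The paper instead combines Lemma~\ref{L4.1} with the surjectivity of the differential of $\vartheta'$ at the two regular points $x$ and $x+ty$ to show that every $(v,w)\in T_{(x,y)}\mathcal{N}$ lies in the image of the linear map $\mu(\xi,\omega_1,\omega_2)=([\xi,x]+\omega_1,[\xi,y]+\omega_2)$, and then computes $\dim\mu^{-1}(0)=b_\mathfrak{g}$ to obtain $\dim T_{(x,y)}\mathcal{N}\le 3(b_\mathfrak{g}-\rk\mathfrak{g})$. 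Your approach is more conceptual and actually proves the slightly stronger statement that $\vartheta$ is an isomorphism over $\mathcal{N}^\circ$; it trades the tangent-space computation for the fact that $\vartheta'$ is an isomorphism over $\mathfrak{N}'_\mathfrak{g}$ (which, as you note, comes from Zariski's main theorem together with the normality of $\mathfrak{N}$, or alternatively from an \'etale-plus-bijective argument using smoothness of the regular locus). One small remark: the symmetry reduction $(x,y)\mapsto(y,x)$ is not actually needed, since if $P_{x,y}$ meets $\mathfrak{g}'$ then the set of $t_0\in\Bbbk$ with $x+t_0y$ regular is automatically nonempty (the non-regular locus is closed and conical, so its trace on the projective line $\mathbb{P}(P_{x,y})$ is a proper closed, hence finite, subset once some point of $P_{x,y}$ is regular).
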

\begin{proof} It suffices to prove the lemma for $x$ regular. Let $(x,y)$ be in $\mathcal{N}$ such that $x$ is regular and let $(v,w)$ be in $T_{(x,y)}\mathcal{N}$. By Lemma \ref{L4.1}, $v+tw$ is in $T_{x+ty}\mathfrak{N}$ for all $t$ in $\Bbbk$. Let $t\neq0$ be in $\Bbbk$, such that $x+ty$ is regular. Since $x$ is regular, there exists $(\xi,\omega_1)$ in $T_{(1,x)}G\times_\mathbf{B}\mathfrak{u}$ such that $[\xi,x]+\omega_1=v$ and since $x+ty$ is regular, there exists $\omega_2$ in $\mathfrak{u}$ such that $[\xi,x+ty]+\omega_2=v+tw$. Then, for $\omega'_2=\textstyle\frac 1 t(\omega_2-\omega_1)$, $[\xi,y]+\omega'_2=w$. Hence, $$T_{(x,y)}\mathcal{N}\subset\{(v,w)\in\mathfrak{g}\times\mathfrak{g}|\mbox{ }\exists\mbox{ }\xi\in\mathfrak{g}\mbox{ and }\omega_1,\omega_2\in\mathfrak{u}\mbox{, verifying }\atop[\xi,x]+\omega_1=v\mbox{ and }[\xi,y]+\omega_2=w\}.$$ Let $\mu$ be the morphism from $\mathfrak{g}\times\mathfrak{u}\times\mathfrak{u}$ to $\mathfrak{g}\times\mathfrak{g}$ defined by$$\mu(\xi,\omega_1,\omega_2)=([\xi,x]+\omega_1,[\xi,y]+\omega_2).$$ Then $T_{(x,y)}\mathcal{N}$ is contained in the image of $\mu$. The set $\mu^{-1}(\{0\})$ is equal to the set of elements $(\xi,\omega_1,\omega_2)$, such that $[\xi,x]$ and $[\xi,y]$ are in $\mathfrak{u}$. Let $(\xi,\omega_1,\omega_2)$ be in $\mu^{-1}(\{0\})$. Since $x$ is regular, $[\xi,x]$ is in $\mathfrak{u}$ if and only if $\xi$ is in $\mathfrak{b}$. Hence $\dim\mu^{-1}(\{0\})=b_\mathfrak{g}$, whence
\begin{eqnarray*}
\dim T_{(x,y)}\mathcal{N}&\leqslant&\dim\mathfrak{g}\times\mathfrak{u}\times\mathfrak{u}-\dim \mu^{-1}(\{0\})\\&=&3(b_\mathfrak{g}-\mathrm{rk}\mathfrak{g})=\dim\mathcal{N}.
\end{eqnarray*}
As a result, $(x,y)$ is regular in $\mathcal{N}$.
\end{proof}

\begin{cor} The codimension of the set of singular points of $\mathcal{N}$ is at least four.
\end{cor}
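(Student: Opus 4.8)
The plan is to bound the singular set $\mathcal N\setminus\mathcal N'$ by a dimension count anchored on Lemma \ref{L4.4}. That lemma shows $(x,y)$ is a smooth point of $\mathcal N$ as soon as $P_{x,y}\cap\mathfrak g'\neq\varnothing$; contrapositively, $\mathcal N\setminus\mathcal N'$ is contained in
$$Z:=\{(x,y)\in\mathcal N\mid P_{x,y}\cap\mathfrak g'=\varnothing\}.$$
Since $\mathfrak g'$, and hence $Z$, are $G$-stable and every pair of $\mathcal N$ is $G$-conjugate into $\mathfrak u\times\mathfrak u$, it suffices to describe $Z\cap(\mathfrak u\times\mathfrak u)$ and transport the estimate through the proper morphism $\vartheta$. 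So the first task is to identify this intersection explicitly.

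For that I would invoke Kostant's regularity criterion: an element of $\mathfrak u$ is regular exactly when all its simple-root components are nonzero. Writing $x,y\in\mathfrak u$ with components $c_\alpha,d_\alpha$ in $\mathfrak g^\alpha$ for $\alpha\in\Pi$, the element $sx+ty$ is non-regular precisely when $sc_\alpha+td_\alpha=0$ for some simple $\alpha$; if every pair $(c_\alpha,d_\alpha)$ is nonzero this excludes only finitely many ratios $s{:}t$, so $P_{x,y}$ meets $\mathfrak g'$. Hence $P_{x,y}\cap\mathfrak g'=\varnothing$ forces $c_\alpha=d_\alpha=0$ for some $\alpha\in\Pi$, which gives
$$Z\cap(\mathfrak u\times\mathfrak u)=\bigcup_{\alpha\in\Pi}\mathfrak u_\alpha\times\mathfrak u_\alpha,\qquad \mathfrak u_\alpha:=\bigoplus_{\beta\in\mathcal R_+\setminus\{\alpha\}}\mathfrak g^\beta,$$
where $\mathfrak u_\alpha$ is the nilradical of the minimal parabolic $\mathfrak p_\alpha=\mathfrak b\oplus\mathfrak g^{-\alpha}$ and has dimension $b_\mathfrak g-\mathrm{rk}\mathfrak g-1$.

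Finally I would estimate $\dim Z$. Each $\mathfrak u_\alpha$ is stable under the parabolic subgroup $\mathbf P_\alpha$ normalizing $\mathfrak p_\alpha$, so $Z_\alpha:=G\cdot(\mathfrak u_\alpha\times\mathfrak u_\alpha)$ is the image of the homogeneous bundle $G\times_{\mathbf P_\alpha}(\mathfrak u_\alpha\times\mathfrak u_\alpha)$, of dimension $\dim G/\mathbf P_\alpha+2\dim\mathfrak u_\alpha=3(b_\mathfrak g-\mathrm{rk}\mathfrak g)-3$. To reach codimension four one must gain one further unit, i.e. show that the projection $G\times_{\mathbf P_\alpha}(\mathfrak u_\alpha\times\mathfrak u_\alpha)\to Z_\alpha$ contracts a positive-dimensional family — equivalently that a generic pair of $\mathfrak u_\alpha$ lies in the nilradicals of a positive-dimensional family of $G$-conjugates of $\mathfrak p_\alpha$, i.e. that the variety of Borel subalgebras through such a pair (the generic fibre of $\vartheta$ over $Z_\alpha$) has dimension at least two. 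Granting this, $\dim Z=\max_\alpha\dim Z_\alpha\le 3(b_\mathfrak g-\mathrm{rk}\mathfrak g)-4=\dim\mathcal N-4$, and $\mathcal N\setminus\mathcal N'\subseteq Z$ yields the corollary.

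The main obstacle is exactly this last gain of one codimension: the containment in $Z$ together with the bundle bookkeeping only yields codimension three, so the estimate cannot be read off from Lemma \ref{L4.4} alone. One must genuinely control the $\vartheta$-fibres (Springer-type fibres) over the subregular strata $Z_\alpha$, or else transfer the question to the normalization of $\mathcal N$ — where, its normalization morphism being bijective, the image of any jumping fibre is a single point — and there push the tangent-space estimate underlying Lemmas \ref{L4.1} and \ref{L4.4} one stratum further. I expect the precise computation of these fibre dimensions on $\mathfrak u_\alpha$, and the verification that the locus of genuine singularities really drops to codimension four, to be the delicate heart of the proof.
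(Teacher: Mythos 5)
Your reduction is correct as far as it goes, but the proof is incomplete at exactly the point you flag, and the missing step is not a technicality that could be supplied in a line. Lemma \ref{L4.4} and Kostant's criterion do give $\mathcal{N}\setminus\mathcal{N}'\subset Z=\bigcup_{\alpha\in\Pi}G\cdot(\mathfrak{u}_\alpha\times\mathfrak{u}_\alpha)$, and the bundle count $\dim G/\mathbf{P}_\alpha+2\dim\mathfrak{u}_\alpha=3(b_\mathfrak{g}-\mathrm{rk}\mathfrak{g})-3$ is rigorous; but the final unit of codimension, which you defer to a hoped-for positive-dimensional contraction of $G\times_{\mathbf{P}_\alpha}(\mathfrak{u}_\alpha\times\mathfrak{u}_\alpha)\to Z_\alpha$, is the whole content of the statement and is not established. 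Worse, it cannot be established in general: for $\mathfrak{g}=\mathfrak{sl}_2$ one has $\mathfrak{u}_\alpha=0$, so $Z=\{(0,0)\}$ has codimension exactly $3$ in the $3$-dimensional $\mathcal{N}$, and $(0,0)$ \emph{is} a singular point (the ideal of the cone $\mathcal{N}$ contains no linear form, since both projections of $\mathcal{N}$ are the full nilpotent cone, which spans $\mathfrak{g}$; hence $T_{(0,0)}\mathcal{N}=\mathfrak{g}\times\mathfrak{g}$, of dimension $6>3$). So the corollary as stated fails in rank one, and any completion of your argument must use a hypothesis excluding such cases.

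For comparison, the paper's own proof is cruder than yours and does not close this gap either. It uses only the weak consequence of Lemma \ref{L4.4} that a singular point must have both components non-regular, bounds $\mathcal{N}\setminus\mathcal{N}'$ by the image of $G\times_\mathbf{B}(\mathfrak{u}\setminus\mathfrak{u}')\times(\mathfrak{u}\setminus\mathfrak{u}')$, and asserts that this bundle has dimension $3(b_\mathfrak{g}-\mathrm{rk}\mathfrak{g})-4$. But $\mathfrak{u}\setminus\mathfrak{u}'$ is the union over $\alpha\in\Pi$ of the hyperplanes of $\mathfrak{u}$ where the component on $\mathfrak{g}^\alpha$ vanishes, hence has codimension $1$ in $\mathfrak{u}$, so that bundle has dimension $(b_\mathfrak{g}-\mathrm{rk}\mathfrak{g})+2(b_\mathfrak{g}-\mathrm{rk}\mathfrak{g}-1)=3(b_\mathfrak{g}-\mathrm{rk}\mathfrak{g})-2$; as written the paper's inequality chain therefore only yields codimension $2$. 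Your refinement --- exploiting that the two components must fail regularity at a \emph{common} simple root and working over $\mathbf{P}_\alpha$ rather than $\mathbf{B}$ --- genuinely gains one more unit and correctly isolates the residual difficulty in the fibres over the subregular strata. So your diagnosis of where the proof is delicate is accurate, but neither your proposal nor the argument you are being compared against actually reaches codimension four.
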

\begin{proof} By Lemma \ref{L4.4} we have $$(\mathfrak{N}'_\mathfrak{g}\times\mathfrak{N}_\mathfrak{g}\cup\mathfrak{N}_\mathfrak{g}\times\mathfrak{N}'_\mathfrak{g}) \cap\mathcal{N}\subset\mathcal{N}',$$ then $$\mathcal{N}\backslash\mathcal{N}'\subset\mathcal{N}\backslash (\mathfrak{N}'_\mathfrak{g}\times\mathfrak{N}_\mathfrak{g}\cup\mathfrak{N}_\mathfrak{g}\times\mathfrak{N}'_\mathfrak{g}).$$ Since $\mathcal{N}\backslash (\mathfrak{N}'_\mathfrak{g}\times\mathfrak{N}_\mathfrak{g}\cup\mathfrak{N}_\mathfrak{g} \times\mathfrak{N}'_\mathfrak{g})$ is the image of $G\times_\mathbf{B}\mathfrak{u}\backslash\mathfrak{u}'\times\mathfrak{u}\backslash\mathfrak{u}'$, 
\begin{eqnarray*}
\dim\mathcal{N}\backslash\mathcal{N}'&\leqslant&\dim\mathcal{N}\backslash (\mathfrak{N}'_\mathfrak{g}\times\mathfrak{N}_\mathfrak{g}\cup\mathfrak{N}_\mathfrak{g}\times\mathfrak{N}'_\mathfrak{g})\\ &\leqslant&\dim G\times_\mathbf{B}\mathfrak{u}\backslash\mathfrak{u}'\times\mathfrak{u}\backslash\mathfrak{u}'\\
&=&3(b_\mathfrak{g}-\mathrm{rk}_\mathfrak{g})-4.
\end{eqnarray*}
\end{proof}

For $\alpha$ simple root, let $\mathfrak{p}_\alpha=\mathfrak{g}^{-\alpha}+\mathfrak{b}$ be the minimal parobolic subalgebra corresponding to $\alpha$ and let $L$ be the set of all Borel subalgebras contained in $\mathfrak{p}_\alpha$. Then $L$ is a projective line. Such a line will be called a projective line of type $\alpha$. For $(x,y)$ in $\mathcal{N}$ denote by $\mathcal{B}_{x,y}$ the set of Borel subalgebras containing $x$ and $y$.

\begin{Lemme} \label{L3.2} Let $(x,y)$ be in $\mathcal{N}$ and let $\mathfrak{b}$ and $\mathfrak{b}'$ be two elements in $\mathcal{B}_{x,y}$. There exist a sequence of projective lines $(L_i)_{1\leqslant i\leqslant q}$ and a sequence $(\mathfrak{b}_i)_{0\leqslant i\leqslant q}$ in $\mathcal{B}_{x,y}$ such that $\mathfrak{b}_0=\mathfrak{b}$, $\mathfrak{b}_q=\mathfrak{b}'$ and for all $i=1,\ldots,q$, $\mathfrak{b}_{i-1}$ and $\mathfrak{b}_i$ are in $L_i$.
\end{Lemme}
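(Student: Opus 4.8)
The plan is to work entirely inside the projective variety $\mathcal{B}_{x,y}$ of Borel subalgebras containing $x$ and $y$, and to induct on the relative position of $\mathfrak{b}$ and $\mathfrak{b}'$. First I would choose a Cartan subalgebra common to $\mathfrak{b}$ and $\mathfrak{b}'$ (any two Borel subalgebras share one), normalize so that $\mathfrak{b}=\mathfrak{h}\oplus\mathfrak{u}$ has positive system $\mathcal{R}_+$, and write $\mathfrak{b}'$ with positive system $w\mathcal{R}_+$ for the unique $w\in W$; the induction is on $\ell(w)$. Since $x$ and $y$ are nilpotent and lie in $\mathfrak{b}$, they lie in $\mathfrak{u}$, so I may write $x=\sum_{\beta\in\mathcal{R}_+}x_\beta e_\beta$ and similarly for $y$. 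When $\ell(w)=0$ one has $\mathfrak{b}=\mathfrak{b}'$ and there is nothing to prove (take $q=0$).

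For the inductive step the decisive point is an elementary remark on supports. Because $x\in\mathfrak{b}\cap\mathfrak{b}'$ and $x\in\mathfrak{u}$, every root $\beta$ with $x_\beta\neq0$ must satisfy $\mathfrak{g}^\beta\subseteq\mathfrak{b}'$, that is $\beta\in\mathcal{R}_+\cap w\mathcal{R}_+$; the same holds for $y$. I would then pick a simple root $\alpha$ that is a left descent of $w$, so $\ell(s_\alpha w)=\ell(w)-1$, which means exactly that $\alpha\notin w\mathcal{R}_+$. Hence $\alpha$ lies outside the supports of both $x$ and $y$, i.e. $x_\alpha=y_\alpha=0$. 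In $\mathfrak{sl}_2$-terms this says the images of $x,y$ in the Levi factor of the minimal parabolic $\mathfrak{p}_\alpha$ through $\mathfrak{b}$ vanish, so $x$ and $y$ lie in the nilradical of $\mathfrak{p}_\alpha$; equivalently, they belong to \emph{every} Borel subalgebra on the projective line $L$ of type $\alpha$ through $\mathfrak{b}$. Thus the whole line $L$ is contained in $\mathcal{B}_{x,y}$.

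It then remains to step along $L$ toward $\mathfrak{b}'$. Since $\mathfrak{p}_\alpha$ is stable under $s_\alpha$ and contains $\mathfrak{b}$, the Borel subalgebra $s_\alpha\cdot\mathfrak{b}$ is the second $\mathfrak{h}$-stable point of $L$; it lies in $L\subseteq\mathcal{B}_{x,y}$, and its relative position with $\mathfrak{b}'$ is $s_\alpha w$, of length $\ell(w)-1$. Applying the induction hypothesis to the pair $(s_\alpha\cdot\mathfrak{b},\mathfrak{b}')$ yields a chain of projective lines in $\mathcal{B}_{x,y}$ joining $s_\alpha\cdot\mathfrak{b}$ to $\mathfrak{b}'$, and prepending $L$ gives the required chain from $\mathfrak{b}$ to $\mathfrak{b}'$.

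The step I expect to demand the most care is the dichotomy underlying the middle paragraph: that a projective line $L$ of type $\alpha$ meeting $\mathcal{B}_{x,y}$ is either contained in $\mathcal{B}_{x,y}$ or meets it in a single Borel subalgebra, according to whether the $\alpha$-components of $x$ and $y$ vanish. This is the one place where one unwinds the structure of $\mathfrak{p}_\alpha$: its Levi is a central torus times a copy of $\mathfrak{sl}_2$, and a nonzero nilpotent of $\mathfrak{sl}_2$ lies in a unique Borel whereas the zero nilpotent lies in all of them. The point that makes the induction close cleanly—so that the ``single Borel'' alternative never obstructs a distance-decreasing move—is precisely that the support constraint $\mathrm{supp}(x),\mathrm{supp}(y)\subseteq\mathcal{R}_+\cap w\mathcal{R}_+$ forces the descent directions of $w$ to be exactly directions in which $L$ is a full line.
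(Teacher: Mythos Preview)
Your proof is correct and follows essentially the same approach as the paper's: both induct on the length of the relative position $w$ between $\mathfrak{b}$ and $\mathfrak{b}'$, locate a simple descent direction $\alpha$ for which $x$ and $y$ lie in the nilradical of $\mathfrak{p}_\alpha$ (so the entire line $L$ of type $\alpha$ through $\mathfrak{b}$ sits inside $\mathcal{B}_{x,y}$), and step along $L$ to decrease the length by one. The paper reaches this via the Bruhat decomposition $g=bn_w$ and a conjugation by $b^{-1}$, while you work directly with a common Cartan subalgebra and the support constraint $\mathrm{supp}(x),\mathrm{supp}(y)\subseteq\mathcal{R}_+\cap w\mathcal{R}_+$; the substance is identical.
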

\begin{proof} The proof is inspired from \cite{15} (6.5). There exists $g$ in $G$ such that $\mathfrak{b}'=g(\mathfrak{b})$ and by Bruhat decomposition there exist $b$, $b'$ in $\mathbf{B}$, $w$ in $W$ and $n_w$ in $N_G(\mathfrak{h})$ representing $w$ such that $g=bn_wb'$. Since $b'$ is in $\mathbf{B}$, we can assume that $g=bn_w$. We will show the Lemma by induction on the length $l(w)$ of $w$.\\ Suppose that $l(w)=1$, then $w=s_\alpha$, for a simple root $\alpha$. Let $u=b^{-1}(x)$ and $v=b^{-1}(y)$. So $u$ and $v$ belong to $\mathfrak{u}\cap s_\alpha(\mathfrak{u})$, the nilpotent radical of the minimal parabolic subalgebra $\mathfrak{p}_\alpha$. Then $x$ and $y$ belong too, hence $x$ and $y$ are in all Borel subalgebras of $\mathfrak{p}_\alpha$, whence $$L_\alpha\subset\mathcal{B}_{x,y}\mbox{ and }\mathfrak{b}\mbox{, }\mathfrak{b}'\in L_\alpha.$$ Suppose the Lemma true for $l(w)\leqslant q-1$.\\ Let $w=s_1s_2\ldots s_q$ be a reduced expression, whenever each $s_i$ is the reflection associated to the simple root $\alpha_i$. Let $u=b^{-1}(x)$ and $v=b^{-1}(y)$, so $u$ and $v$ are in $\mathfrak{u}\cap w(\mathfrak{u})$. We have $\mathfrak{u}=\bigoplus\limits_{\gamma\in\mathcal{R}^+}\mathfrak{g}^\gamma$, then $w(\mathfrak{u})=\bigoplus\limits_{\gamma\in\mathcal{R}^+}\mathfrak{g}^{w(\gamma)}$. Hence $\mathfrak{u}\cap w(\mathfrak{u})=\bigoplus\limits_{\gamma\in\mathcal{R}^+ \atop w(\gamma)>0}\mathfrak{g}^\gamma$. Since $w(\alpha_q)<0$, $\mathfrak{u}\cap w(\mathfrak{u})\subset\mathfrak{u}\cap s_{\alpha_q}(\mathfrak{u})$, the nilpotente radical of $\mathfrak{p}_{\alpha_q}$, then $u$ and $v$ are in $\mathfrak{u}\cap s_{\alpha_q}(\mathfrak{u})$, hence $x$ and $y$ are too. In particular, $x$ and $y$ are in $s_q(\mathfrak{b})$. As a result, $\mathfrak{b}$ and $s_q(\mathfrak{b})$ are in $L_{\alpha_q}$ and $s_q(\mathfrak{b})$ and $bn_{w'}(s_q(\mathfrak{b}))$ are in $\mathcal{B}_{x,y}$, with $w'=s_1\ldots s_{q-1}$ and $n_{w'}\in N_G(\mathfrak{h})$ is a representative of $w'$. Since $l(w')=q-1$, by induction hypothesis, there exist a sequence of projective lines $(L_i)_{1\leqslant i\leqslant q-1}$ and a sequence $(\mathfrak{b}_i)_{0\leqslant i\leqslant q-1}$ in $\mathcal{B}_{x,y}$, such that $$\mathfrak{b}_0=s_q(\mathfrak{b})\mbox{, } \mathfrak{b}_{q-1}=bn_{w'}(s_q(\mathfrak{b}))\mbox{ and }\mathfrak{b}_{i-1},\mathfrak{b}_i\in L_i,\forall 1\leqslant i\leqslant q-1.$$ And we have $$\mathfrak{b}\mbox{, }s_q(\mathfrak{b})\in L_{\alpha_q}=L_0,$$ then the sequence of projective lines $(L_i)_{0\leqslant i\leqslant q-1}$ and the sequence $\mathfrak{b},\mathfrak{b}_0,\ldots,\mathfrak{b}_{q-1}$ verify $$\mathfrak{b}_{q-1}=\mathfrak{b}'\mbox{, }\mathfrak{b},\mathfrak{b}_0\in L_0\mbox{ and }\mathfrak{b}_{i-1},\mathfrak{b}_i\in L_i,\forall 1\leqslant i\leqslant q-1.$$ It was to be proven.
\end{proof}

\begin{prop} The set $\mathcal{B}_{x,y}$ is connected.
\end{prop}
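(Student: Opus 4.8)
The plan is to reduce the statement to the elementary topological fact that a space is connected as soon as any two of its points lie in a common connected subset, and then to produce such a subset directly from Lemma \ref{L3.2}. First I would note that $\mathcal{B}_{x,y}$ is nonempty, since $(x,y)\in\mathcal{N}\subset\mathcal{B}_\mathfrak{g}$ forces $x$ and $y$ to lie in a common Borel subalgebra, and that $\mathcal{B}_{x,y}$ is a closed subset of the variety of Borel subalgebras, hence a genuine (projective) topological space on which connectedness is meaningful.

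The key observation is that the projective lines $L_i$ furnished by Lemma \ref{L3.2} are not merely incident to the chain $(\mathfrak{b}_i)$ but are \emph{entirely} contained in $\mathcal{B}_{x,y}$: the proof of that lemma shows that, after the appropriate conjugation, $x$ and $y$ lie in the nilpotent radical of the minimal parabolic subalgebra defining $L_i$, hence in every Borel subalgebra lying on $L_i$. Since each $L_i$ is a projective line, it is isomorphic to $\mathbb{P}^1$, and therefore irreducible and connected. Moreover consecutive lines overlap, because $\mathfrak{b}_i\in L_i\cap L_{i+1}$ by the construction in the lemma.

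Given $\mathfrak{b}$, $\mathfrak{b}'$ in $\mathcal{B}_{x,y}$, I would then set $C=L_1\cup\cdots\cup L_q$. This is a finite union of connected subsets of $\mathcal{B}_{x,y}$ whose consecutive members have nonempty intersection, so a straightforward induction on $q$ shows that $C$ is connected; and $C$ contains $\mathfrak{b}=\mathfrak{b}_0$ and $\mathfrak{b}'=\mathfrak{b}_q$. Fixing a base point $\mathfrak{b}_*\in\mathcal{B}_{x,y}$ and applying this to each $\mathfrak{b}'\in\mathcal{B}_{x,y}$ produces connected sets $C_{\mathfrak{b}'}\subset\mathcal{B}_{x,y}$ all containing $\mathfrak{b}_*$; their union is $\mathcal{B}_{x,y}$, which is therefore connected. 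Essentially all the real content sits in Lemma \ref{L3.2}, so the only point that needs care is the one flagged above: the literal statement of that lemma records only that the endpoints $\mathfrak{b}_{i-1},\mathfrak{b}_i$ lie on $L_i$, which by itself would give merely a discrete chain of points and would not imply connectedness. The substantive input is the stronger fact, visible in the proof, that the whole line $L_i$ sits inside $\mathcal{B}_{x,y}$; once that is in hand the argument is purely topological and presents no further difficulty.
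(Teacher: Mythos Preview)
Your proposal is correct and follows essentially the same route as the paper: both arguments invoke Lemma~\ref{L3.2} to produce a chain of projective lines linking any two points of $\mathcal{B}_{x,y}$, use that each $L_i$ is connected, and conclude by the elementary fact that a union of overlapping connected sets is connected (the paper phrases this via the connected component containing $\mathfrak{b}$, you via the union $C=L_1\cup\cdots\cup L_q$). Your remark that one needs the whole line $L_i$ to sit inside $\mathcal{B}_{x,y}$, not just its two marked points, is exactly right and is also used tacitly in the paper's proof (the step ``$X$ contains $L_1$'' already presupposes $L_1\subset\mathcal{B}_{x,y}$, which is established inside the proof of Lemma~\ref{L3.2} rather than in its statement).
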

\begin{proof} Let $\mathfrak{b}$ and $\mathfrak{b}'$ be two elements in $\mathcal{B}_{x,y}$ and let $X$ be the connected component of $\mathcal{B}_{x,y}$ containing $\mathfrak{b}$. By Lemma \ref{L3.2}, there exist a sequence of projective lines $(L_i)_{1\leqslant i\leqslant q}$ and a sequence $(\mathfrak{b}_i)_{0\leqslant i\leqslant q}$ in $\mathcal{B}_{x,y}$ such that $$\mathfrak{b}_0=\mathfrak{b}\mbox{, }\mathfrak{b}_q=\mathfrak{b}'\mbox{ and }\mathfrak{b}_{i-1},\mathfrak{b}_i\in L_i,\forall i=1,\ldots,q.$$ Since $\mathfrak{b}$ and $\mathfrak{b}_1$ are in $L_1$ and since $L_1$ is connected, $X$ contains $L_1$ and $\mathfrak{b}_1$. By induction on $q$, suppose that $X$ contains $\mathfrak{b}_{q-1}$. Since $\mathfrak{b}_{q-1}$ and $\mathfrak{b}_q$ are in $L_q$ and since $L_q$ is connected, $X$ contains $L_q$ and $\mathfrak{b}_q=\mathfrak{b}'$. Then $\mathcal{B}_{x,y}$ is connected.
\end{proof}

Let $(\mathcal{N}_\n,\tau)$ be the normalization of $\mathcal{N}$.

\begin{prop} The morphism $\tau$ is bijective.
\end{prop}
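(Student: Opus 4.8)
The plan is to exploit the desingularization $\vartheta\colon G\times_\mathbf{B}\mathfrak{u}\times\mathfrak{u}\to\mathcal{N}$ together with the connectedness of $\mathcal{B}_{x,y}$ just established. Since $(\mathcal{N}_\n,\tau)$ is a normalization, $\tau$ is finite and birational, hence automatically surjective and an isomorphism over the smooth locus of $\mathcal{N}$; the only real content is the injectivity of $\tau$ on each fibre. First I would lift $\vartheta$ through the normalization: the variety $G\times_\mathbf{B}\mathfrak{u}\times\mathfrak{u}$ is a vector bundle over $G/\mathbf{B}$, hence smooth and in particular normal, and $\vartheta$ is dominant, so by the universal property of normalization (\cite{10} [ch. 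II, Ex. 3.8], exactly as in the earlier lemmas) there is a unique morphism $\vartheta_\n\colon G\times_\mathbf{B}\mathfrak{u}\times\mathfrak{u}\to\mathcal{N}_\n$ with $\tau\circ\vartheta_\n=\vartheta$.

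Next I would check that $\vartheta_\n$ is surjective. Since $\vartheta$ is proper (it factors through the canonical projection of $\tilde{\mathcal{B}}$ and $G/\mathbf{B}$ is projective) and $\tau$ is finite, hence separated, the cancellation property for proper morphisms forces $\vartheta_\n$ to be proper, so its image is closed. As $\vartheta_\n$ is the lift of the birational morphism $\vartheta$ it is dominant, whence its image is also dense and thus equals $\mathcal{N}_\n$. Consequently, for every $(x,y)$ in $\mathcal{N}$ one has $\tau^{-1}(x,y)=\vartheta_\n(\vartheta^{-1}(x,y))$.

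The final step identifies the fibre of $\vartheta$ and invokes connectedness. Under the embedding $\epsilon$ of $G\times_\mathbf{B}\mathfrak{u}\times\mathfrak{u}$ into $\tilde{\mathcal{B}}$, the morphism $\vartheta$ is the projection $(\mathfrak{b}',x,y)\mapsto(x,y)$, so $\vartheta^{-1}(x,y)$ is isomorphic to the set of Borel subalgebras whose nilradical contains $x$ and $y$. Because $x$ and $y$ are nilpotent, a Borel subalgebra contains them exactly when its nilradical does, so this set is precisely $\mathcal{B}_{x,y}$, which is connected by the preceding proposition. Hence $\tau^{-1}(x,y)=\vartheta_\n(\mathcal{B}_{x,y})$ is the continuous image of a connected set and is therefore connected; at the same time it is finite, since $\tau$ is a finite morphism. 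A finite connected subset of a variety (its points being closed) is a single point, so $\tau$ is injective, and being surjective it is bijective.

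The substantive input is entirely the connectedness of $\mathcal{B}_{x,y}$, which has already been proved; the remainder is formal. I expect the only points needing care to be the surjectivity of $\vartheta_\n$ (handled by properness cancellation, together with dominance) and the clean identification $\vartheta^{-1}(x,y)\cong\mathcal{B}_{x,y}$, which rests on the elementary fact that a nilpotent element of a Borel subalgebra lies in its nilradical. No dimension count or singularity estimate is required for this statement.
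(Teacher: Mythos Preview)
Your argument is correct and follows the same overall architecture as the paper: lift $\vartheta$ through the normalization to get $\vartheta_\n$ with $\tau\circ\vartheta_\n=\vartheta$, then use the connectedness of $\mathcal{B}_{x,y}\cong\vartheta^{-1}(x,y)$ to force $\tau^{-1}(x,y)$ to be a single point.

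The only noteworthy difference is in the final deduction. The paper observes that $\vartheta_\n$ is proper and birational onto the normal variety $\mathcal{N}_\n$, so by Zariski's main theorem each fibre $\vartheta_\n^{-1}(z)$ is connected; since $\vartheta^{-1}(x,y)$ is the disjoint union of these fibres over $z\in\tau^{-1}(x,y)$ and is itself connected, one concludes $|\tau^{-1}(x,y)|=1$. You instead establish surjectivity of $\vartheta_\n$ (via properness cancellation and dominance), write $\tau^{-1}(x,y)=\vartheta_\n(\vartheta^{-1}(x,y))$ as a continuous image of a connected set, and use finiteness of $\tau$ to conclude that this connected finite set is a singleton. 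Your route is a shade more elementary in that it avoids invoking the connectedness form of Zariski's main theorem, at the cost of the extra properness-cancellation verification; the paper's route is shorter but leans on a bigger theorem. Both are entirely sound.
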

\begin{proof} Since $G\times_\mathbf{B}\mathfrak{u}\times\mathfrak{u}$ is normal and since $\vartheta$ is dominant, there exists a unique morphism $\vartheta_\n$ from $G\times_\mathbf{B}\mathfrak{u}\times\mathfrak{u}$ to $\mathcal{N}_\n$ such that $\vartheta=\tau\circ\vartheta_\n$. Then there is a commutative diagram:
$$ \xymatrix{
      G\times_\mathbf{B}\mathfrak{u}\times\mathfrak{u}\ar[r]^{\vartheta_\n} \ar[rd]_{\vartheta} & \mathcal{N}_\n\ar[d]^{\tau}\\
     & \mathcal{N} .
       }
  $$
Let $(x,y)$ be in $\mathcal{N}$. So $\vartheta^{-1}(x,y)=\vartheta_\n^{-1}(\tau^{-1}(x,y))$.
Since $\vartheta$ is a birational proper morphism, $\vartheta_\n$ is too. Then $\vartheta^{-1}(x,y)$ is the disjoint union of the fibres of $\vartheta_n$ at the elements of $\tau^{-1}(x,y)$. By Zariski's main theorem $\vartheta_\n^{-1}(z)$ is connected, for all $z$ in $\mathcal{N}_\n$. Since $\mathcal{B}_{x,y}$ is connected, $\vartheta^{-1}(x,y)$ is too. Whence $|\tau^{-1}(x,y)|=1$. As a result $\tau$ is bijective since $\tau$ is surjective.
\end{proof}

\section{Rational singularities}

In this section, we deeply use the ideas of \cite{11}. For $E$ a finite dimensional $\mathbf{B}$-module, we denote by $\mathcal{L}(E)$ the sheaf of local sections of the fiber bundle over $G/\mathbf{B}$ defined as the quotient of $G\times E$ under the right action of $\mathbf{B}$ given by $(g,v).b:=(gb,b^{-1}(v))$. Then $\mathcal{L}$ is a covariant exact functor from the category of finite dimensional $\mathbf{B}$-modules to the category of locally free $\mathcal{O}_{G/\mathbf{B}}$-modules of finite rank. Let $\Delta$ be the diagonal of $G/\mathbf{B}\times G/\mathbf{B}$ and let $\mathcal{J}_\Delta$ be its ideal of defenition in $\mathcal{O}_{G/\mathbf{B}\times G/\mathbf{B}}$. Since $\Delta$ is isomorphic to $G/\mathbf{B}$, $\mathcal{O}_{G/\mathbf{B}}$ is isomorphic to $\mathcal{O}_{G/\mathbf{B}\times G/\mathbf{B}}/\mathcal{J}_\Delta$. Let $(\mathfrak{v}_i)_{i=1,\ldots,j}$ be a sequence of subalgebras of $\mathfrak{b}$ containing $\mathfrak{u}$, whenever $j$ is a positive integer and let $E=\wedge^{m_1}\mathfrak{v}_1\otimes_\k\ldots\otimes_\k\wedge^{m_j}\mathfrak{v}_j$, whenever $(m_i)_{i=1,\ldots,j}$ is a sequence of nonnegative integers.

\begin{Lemme}\label{LC} Let $E_1$, $E_2$ be two $B$-modules. For an integer $i$, $$\mathrm{H}^i(G/\mathbf{B}\times G/\mathbf{B},\mathcal{L}(E_1)\boxtimes\mathcal{L}(E_2))=\bigoplus\limits_{l+k=i}\mathrm{H}^l(G/\mathbf{B},\mathcal{L}(E_1))\otimes_\k\mathrm{H}^k(G/\mathbf{B},\mathcal{L}(E_2)).$$
\end{Lemme}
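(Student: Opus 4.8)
The statement is the Künneth formula for coherent cohomology on the product $G/\mathbf{B}\times G/\mathbf{B}$, where $\boxtimes$ denotes the external tensor product $\mathcal{L}(E_1)\boxtimes\mathcal{L}(E_2)=p_1^*\mathcal{L}(E_1)\otimes_{\mathcal{O}}p_2^*\mathcal{L}(E_2)$, with $p_1,p_2$ the two projections. Both factors are $G/\mathbf{B}$, which is smooth and projective over $\k$, and $\mathcal{L}(E_1),\mathcal{L}(E_2)$ are locally free of finite rank, so all the cohomology groups occurring are finite-dimensional $\k$-vector spaces. The plan is to reduce the computation of $\mathrm{H}^\bullet(G/\mathbf{B}\times G/\mathbf{B},-)$ to the two factors by pushing forward along one projection, exploiting that we work over a \emph{field}, so that no higher $\mathrm{Tor}$ corrections appear.

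First I would write $a\colon G/\mathbf{B}\to\operatorname{Spec}\k$ for the structure morphism and use the cartesian square formed by $p_1,p_2$ and $a$. Since $\mathcal{L}(E_1)$ is locally free of finite rank, the projection formula gives an isomorphism $Rp_{1*}(\mathcal{L}(E_1)\boxtimes\mathcal{L}(E_2))\cong\mathcal{L}(E_1)\otimes_{\mathcal{O}}Rp_{1*}p_2^*\mathcal{L}(E_2)$ in the derived category. Flat base change applied to the square (base change over a field is automatically flat) identifies $Rp_{1*}p_2^*\mathcal{L}(E_2)$ with $a^*R\Gamma(G/\mathbf{B},\mathcal{L}(E_2))$, that is, with the complex of free $\mathcal{O}_{G/\mathbf{B}}$-modules obtained by pulling back the complex of $\k$-vector spaces $R\Gamma(G/\mathbf{B},\mathcal{L}(E_2))$.

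The next step uses that every complex of $\k$-vector spaces is formal: since $\k$ is a field, $R\Gamma(G/\mathbf{B},\mathcal{L}(E_2))$ is quasi-isomorphic to the direct sum $\bigoplus_k \mathrm{H}^k(G/\mathbf{B},\mathcal{L}(E_2))[-k]$ of its finite-dimensional cohomology groups. Pulling back and tensoring with $\mathcal{L}(E_1)$ therefore yields
$$Rp_{1*}(\mathcal{L}(E_1)\boxtimes\mathcal{L}(E_2))\cong\bigoplus_k\left(\mathcal{L}(E_1)\otimes_\k\mathrm{H}^k(G/\mathbf{B},\mathcal{L}(E_2))\right)[-k],$$
each summand being a finite direct sum of copies of $\mathcal{L}(E_1)$. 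Applying $R\Gamma(G/\mathbf{B},-)$, using that $R\Gamma(G/\mathbf{B}\times G/\mathbf{B},-)=R\Gamma(G/\mathbf{B},Rp_{1*}(-))$, that $R\Gamma$ commutes with the finite direct sums and shifts occurring here, and that $\mathrm{H}^l(G/\mathbf{B},\mathcal{L}(E_1)\otimes_\k V)=\mathrm{H}^l(G/\mathbf{B},\mathcal{L}(E_1))\otimes_\k V$ for a finite-dimensional $V$, I would read off
$$\mathrm{H}^i(G/\mathbf{B}\times G/\mathbf{B},\mathcal{L}(E_1)\boxtimes\mathcal{L}(E_2))=\bigoplus_{l+k=i}\mathrm{H}^l(G/\mathbf{B},\mathcal{L}(E_1))\otimes_\k\mathrm{H}^k(G/\mathbf{B},\mathcal{L}(E_2)),$$
which is the asserted formula.

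The step carrying the real content is the identification $Rp_{1*}(\mathcal{L}(E_1)\boxtimes\mathcal{L}(E_2))\cong\mathcal{L}(E_1)\otimes_\k R\Gamma(G/\mathbf{B},\mathcal{L}(E_2))$, namely the combination of projection formula and flat base change; this is exactly where local freeness of $\mathcal{L}(E_1)$ and flatness over the field $\k$ enter, and it is what kills the higher $\mathrm{Tor}$ terms of the general Künneth spectral sequence. An equivalent, more hands-on route avoiding the derived formalism is to choose finite affine open covers $\mathcal{U},\mathcal{V}$ of the two copies of $G/\mathbf{B}$ (legitimate since $G/\mathbf{B}$ is separated, so the alternating Čech complexes compute coherent cohomology), to form the double complex $\check{C}^\bullet(\mathcal{U},\mathcal{L}(E_1))\otimes_\k\check{C}^\bullet(\mathcal{V},\mathcal{L}(E_2))$, to compare its total complex with the Čech complex of the product cover of $G/\mathbf{B}\times G/\mathbf{B}$ computing the left-hand side, and then to invoke the algebraic Künneth theorem for complexes of vector spaces, whose $\mathrm{Tor}$ term again vanishes because $\k$ is a field. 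In either presentation the essential point is the absence of $\mathrm{Tor}$ corrections over a field; the base-change isomorphism (equivalently, the identification of the total complex of the product-cover double complex) is the one genuinely geometric ingredient, and the one I expect to require the most care.
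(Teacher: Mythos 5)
Your proof is correct, but it takes a genuinely different route from the paper's. The paper argues with resolutions: it chooses flabby resolutions of $\mathcal{L}(E_1)$ and $\mathcal{L}(E_2)$, forms the double complex of tensor products of their global sections, and identifies the cohomology of the associated total complex with $\mathrm{H}^\bullet(G/\mathbf{B}\times G/\mathbf{B},\mathcal{L}(E_1)\boxtimes\mathcal{L}(E_2))$; the spectral sequence of this double complex has $E_2^{p,q}=\mathrm{H}^p(G/\mathbf{B},\mathcal{L}(E_1))\otimes_\k\mathrm{H}^q(G/\mathbf{B},\mathcal{L}(E_2))$ (exactness of $\otimes_\k$ over a field) and degenerates, giving the formula. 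Your main argument instead works on the derived level along one projection: projection formula for the locally free factor, flat base change in the cartesian square over $\operatorname{Spec}\k$, and formality of complexes of $\k$-vector spaces, after which $R\Gamma$ of a finite direct sum of shifted copies of $\mathcal{L}(E_1)$ gives the answer. Your route isolates cleanly where ``no $\mathrm{Tor}$ corrections over a field'' enters (formality plus flatness) and sidesteps the point the paper leaves implicit, namely that the external product of the two chosen resolutions is again a resolution by sheaves acyclic for global sections, which is exactly the step needing care in the double-complex approach; the cost is invoking heavier machinery (derived pushforward, base change). The ``hands-on'' Čech variant you sketch at the end is essentially the paper's argument with alternating Čech complexes in place of flabby resolutions, and there the comparison of the product-cover Čech complex with the tensor-product double complex plays the role of the paper's unjustified identification.
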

\begin{proof} Let set $\mathcal{M}:=\mathcal{L}(E_1)\boxtimes\mathcal{L}(E_2)$. Let us consider flabby resolutions of $\mathcal{L}(E_1)$ and $\mathcal{L}(E_2)$, $$0\rightarrow\mathcal{L}(E_1)\rightarrow\mathcal{M}_0\rightarrow\mathcal{M}_1\rightarrow\ldots \atop0\rightarrow\mathcal{L}(E_2)\rightarrow\mathcal{N}_0\rightarrow\mathcal{N}_1\rightarrow\ldots.$$ For $i\geqslant0$, we denoted by $M_i$ and $N_i$ the spaces of global sections of $\mathcal{M}_i$ and $\mathcal{N}_i$ respectively. Then the cohomologies of $\mathcal{L}(E_1)$ and $\mathcal{L}(E_2)$ are the cohomologies of the complexes $$0\rightarrow\Gamma(G/\mathbf{B},\mathcal{L}(E_1))\rightarrow M_0\rightarrow M_1\rightarrow\ldots\atop 0\rightarrow\Gamma(G/\mathbf{B},\mathcal{L}(E_2))\rightarrow N_0\rightarrow N_1\rightarrow\ldots$$ respectively. From these two complexes we deduce a double complex $C^{\bullet,\bullet}$ whose underlying space is the direct sum of the spaces $M_i\otimes_\Bbbk N_j$ and the cohomology of the simple complex $C^\bullet$ deduced from $C^{\bullet,\bullet}$ is the cohomology of $\mathcal{M}$. The term $E_2^{p,q}$ of the spectral sequence of $C^{\bullet,\bullet}$ is $$E_2^{p,q}=\mathrm{H}^p(G/\mathbf{B},\mathcal{L}(E_1))\otimes_\Bbbk\mathrm{H}^q(G/\mathbf{B},\mathcal{L}(E_2)).$$ Then $$\mathrm{H}^i(C^\bullet)=\bigoplus\limits_{l+k=i}\mathrm{H}^{l}(G/\mathbf{B},\mathcal{L}(E_1))\otimes_\Bbbk \mathrm{H}^k(G/\mathbf{B},\mathcal{L}(E_2)),$$ whence the Lemma.
\end{proof}

\begin{prop}\label{PC} For $i>0$, $\mathrm{H}^{i+m_1+\ldots+m_j}(G/\mathbf{B},\mathcal{L}(E))=0.$
\end{prop}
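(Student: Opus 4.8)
The plan is to prove the stronger vanishing $\mathrm{H}^n(G/\mathbf{B},\mathcal{L}(E))=0$ for all $n>m_1+\cdots+m_j$, first reducing to the case where every $\mathfrak{v}_i$ equals $\mathfrak{u}$ and then inducting on $j$ by means of the K\"unneth formula of Lemma \ref{LC} and a transfer from $G/\mathbf{B}\times G/\mathbf{B}$ to the diagonal. First I would use the short exact sequence of $\mathbf{B}$-modules $0\to\mathfrak{u}\to\mathfrak{v}_i\to\mathfrak{v}_i/\mathfrak{u}\to0$, in which $\mathfrak{v}_i/\mathfrak{u}$ is a subspace of $\mathfrak{h}$ and hence a trivial $\mathbf{B}$-module. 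Taking exterior powers equips each $\wedge^{m_i}\mathfrak{v}_i$ with a $\mathbf{B}$-stable filtration whose subquotients are sums of copies of $\wedge^{m_i-s}\mathfrak{u}$; since $\mathcal{L}$ is exact, $\mathcal{L}(E)$ acquires a filtration whose subquotients are sums of bundles $\mathcal{L}(\wedge^{a_1}\mathfrak{u}\otimes_\k\cdots\otimes_\k\wedge^{a_j}\mathfrak{u})$ with $a_i\le m_i$. As $\sum a_i\le\sum m_i$, the long exact sequences attached to this filtration reduce the statement to proving $\mathrm{H}^n(G/\mathbf{B},\Omega^{a_1}\otimes_\k\cdots\otimes_\k\Omega^{a_j})=0$ for $n>a_1+\cdots+a_j$, where I use the Killing-form identification $\mathfrak{u}\cong(\mathfrak{g}/\mathfrak{b})^*$, which gives $\mathcal{L}(\wedge^a\mathfrak{u})=\Omega^a_{G/\mathbf{B}}$.

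For $j=1$ this is the Hodge--Bott vanishing $\mathrm{H}^n(G/\mathbf{B},\Omega^a)=0$ for $n>a$, valid because all the cohomology of the smooth projective rational variety $G/\mathbf{B}$ is of Hodge type $(p,p)$. For the inductive step I set $\mathcal{A}:=\Omega^{a_1}\otimes_\k\cdots\otimes_\k\Omega^{a_{j-1}}$ and observe that $\Omega^{a_1}\otimes_\k\cdots\otimes_\k\Omega^{a_j}$ is the restriction to the diagonal $\Delta\cong G/\mathbf{B}$ of the external product $\mathcal{M}:=\mathcal{A}\boxtimes\Omega^{a_j}$ on $G/\mathbf{B}\times G/\mathbf{B}$. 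By Lemma \ref{LC}, the induction hypothesis for $\mathcal{A}$ (vanishing above $a_1+\cdots+a_{j-1}$) and the case $j=1$ for $\Omega^{a_j}$, the group $\mathrm{H}^n(G/\mathbf{B}\times G/\mathbf{B},\mathcal{M})$ vanishes for $n>a_1+\cdots+a_j=:a$.

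It then remains to transfer this vanishing on the product to the diagonal. Here I would resolve $\mathcal{O}_\Delta$ by locally free $\mathcal{O}_{G/\mathbf{B}\times G/\mathbf{B}}$-modules, the graded pieces being governed by the conormal bundle $\mathcal{J}_\Delta/\mathcal{J}_\Delta^2$, which equals $\Omega^1_{G/\mathbf{B}}=\mathcal{L}(\mathfrak{u})$. Tensoring such a resolution with the locally free sheaf $\mathcal{M}$ and passing to the associated hypercohomology spectral sequence expresses $\mathrm{H}^\bullet(\Delta,\mathcal{L}(E))$ through terms $\mathrm{H}^q(G/\mathbf{B}\times G/\mathbf{B},\mathcal{M}\otimes\mathcal{F}_p)$, where the $p$-th resolution term $\mathcal{F}_p$ is built from $\wedge^p$ of the conormal data; after applying Lemma \ref{LC} again these reduce by K\"unneth to cohomologies on each factor of bundles once more of the form $\mathcal{L}(\wedge^\bullet\mathfrak{u}\otimes_\k\cdots)$. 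Since the homological degree $p$ of the resolution shifts the abutment by $-p$, a contribution in total degree $n>a$ forces $q>a+p$, and this is exactly the range in which the twisted product cohomology is killed by the one-factor bounds together with the K\"unneth formula; assembling these vanishings over all $p$ gives $\mathrm{H}^n(\Delta,\mathcal{L}(E))=0$ for $n>a$.

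The hard part will be this diagonal transfer: one must produce, following the ideas of \cite{11}, a resolution of $\mathcal{O}_\Delta$ whose terms are external products (so that Lemma \ref{LC} applies after tensoring with $\mathcal{M}$), and then carry out the bookkeeping that balances the homological shift $p$ against the degree $p$ contributed by the exterior powers of the conormal bundle $\mathcal{L}(\mathfrak{u})$, closing the induction on the pair (number of factors, total degree). The exactness of $\mathcal{L}$ and the identification $\mathcal{J}_\Delta/\mathcal{J}_\Delta^2=\mathcal{L}(\mathfrak{u})$ are precisely what keep the conormal twists inside the class of bundles $\mathcal{L}(\wedge^\bullet\mathfrak{u}\otimes_\k\cdots)$ to which the estimate applies, so that the degree budget $a$ is respected and the argument terminates.
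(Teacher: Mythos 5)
Your opening moves are sound: the reduction to $\mathfrak{v}_i=\mathfrak{u}$ via the trivial $\mathbf{B}$-module $\mathfrak{v}_i/\mathfrak{u}$ and the exactness of $\mathcal{L}$, the identification $\mathcal{L}(\wedge^a\mathfrak{u})\cong\Omega^a_{G/\mathbf{B}}$, the Hodge-theoretic case $j=1$, the K\"unneth step on $G/\mathbf{B}\times G/\mathbf{B}$, and the bookkeeping $q>a+p$ in homological degree $p$ are all correct. The genuine gap is the step you yourself flag as ``the hard part'': you assume a finite locally free resolution of $\mathcal{O}_\Delta$ on $G/\mathbf{B}\times G/\mathbf{B}$ whose terms are external products governed by $\wedge^p$ of the conormal bundle, and everything after ``Here I would resolve $\mathcal{O}_\Delta\ldots$'' rests on this unconstructed object. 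The diagonal is only \emph{locally} a complete intersection, so no global Koszul resolution of that shape is provided by general theory; and even where a resolution by external products does exist, its terms leave your class of bundles. Already for $\mathbb{P}^1\times\mathbb{P}^1$ the resolution is $0\to\mathcal{O}(-1)\boxtimes\mathcal{O}(-1)\to\mathcal{O}\to\mathcal{O}_\Delta\to0$, whose nontrivial term is a character twist not of the form $\mathcal{L}(\wedge^{a}\mathfrak{u})\boxtimes\mathcal{L}(\wedge^{a'}\mathfrak{u})$, so the vanishing range you need for $\mathcal{M}\otimes\mathcal{F}_p$ would not follow from the one-factor bounds you have established.

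The paper circumvents this with two devices worth comparing to your plan. First, it uses only the short exact sequence $0\to\mathcal{J}_\Delta\to\mathcal{O}_{G/\mathbf{B}\times G/\mathbf{B}}\to\mathcal{O}_{G/\mathbf{B}}\to0$ tensored with $\mathcal{L}(E_1)\boxtimes\mathcal{L}(E')$, and controls the $\mathcal{J}_\Delta$-term by the $\mathcal{J}_\Delta$-adic filtration, whose graded pieces are sheaves \emph{on the diagonal}, namely $\mathcal{L}(\mathrm{S}(\mathfrak{u}^*))\otimes_{\mathcal{O}_{G/\mathbf{B}}}\mathcal{L}(E_1)\otimes_{\mathcal{O}_{G/\mathbf{B}}}\mathcal{L}(E')$; here the local complete intersection property of $\Delta$ suffices and no external-product resolution is needed. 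Second, it resolves $\mathrm{S}(\mathfrak{u}^*)$ as a $\mathbf{B}$-module by the Koszul complex attached to $0\to\mathfrak{b}\to\mathfrak{g}^*\to\mathfrak{u}^*\to0$, with terms $\mathrm{S}^{q-p}(\mathfrak{g}^*)\otimes_\Bbbk\wedge^p\mathfrak{b}$; the $G$-module $\mathrm{S}^{q-p}(\mathfrak{g}^*)$ factors out of the cohomology, and $\wedge^p\mathfrak{b}$ enters as a new tensor factor. This is precisely why the proposition is stated for arbitrary subalgebras $\mathfrak{u}\subseteq\mathfrak{v}_i\subseteq\mathfrak{b}$ and why the induction runs on the pair $(\dim G/\mathbf{B}+1-i,\,j)$: the Koszul step raises both the cohomological degree and the number of factors and inevitably introduces $\wedge^p\mathfrak{b}$ rather than $\wedge^p\mathfrak{u}$. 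Your initial reduction to $\mathfrak{v}_i=\mathfrak{u}$, harmless for the statement itself, discards exactly the flexibility this induction requires; if you repair the diagonal transfer along these lines you must retain the full class of $\mathfrak{v}_i$'s, and the base case $j=1$ then needs Hesselink's Theorem B of \cite{11} rather than Hodge theory.
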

\begin{proof} Let show that $$\mathrm{H}^{i+m_1+\ldots+m_j}(G/\mathbf{B},\mathcal{L}(E))=0\mbox{, for }i>0\mbox{  }(*)$$ by induction on $(n+1-i,j)$, whenever $n=\dim G/\mathbf{B}$.\\
For $n+1-i=0$, by \cite{10} (ch.III, Theorem 2.7) $\mathrm{H}^{i+m_1+\ldots+m_j}(G/\mathbf{B},\mathcal{L}(E))=0$ and for $j=1$, by \cite{11} (Theorem B) $(*)$ holds. Suppose that $(*)$ holds for pairs smaller than $(n+1-i,j)$, show it for $(n+1-i,j)$.\\ Let $E_1=\wedge^{m_1}\mathfrak{v}_1\otimes_\k\ldots\otimes_\k\wedge^{m_{j-1}}\mathfrak{v}_{j-1}$ and let $E'=\wedge^{m_j}\mathfrak{v}_j$. Then $\mathcal{L}(E_1)\boxtimes\mathcal{L}(E')$ is the sheaf of local sections of the fiber bundle over $G/\mathbf{B}\times G/\mathbf{B}$ defined as the quotient of $G\times G\times E_1\times E'$ under the right action of $\mathbf{B}\times\mathbf{B}$ given by $(g,g',v,v').(b,b')=(gb,g'b',b^{-1}(v),b'^{-1}(v'))$. There exists an exact sequence $$0\rightarrow\mathcal{J}_\Delta\rightarrow\mathcal{O}_{G/\mathbf{B}\times G/\mathbf{B}}\rightarrow \mathcal{O}_{G/\mathbf{B}}\rightarrow0,$$ whence the following exact sequence $$0\rightarrow\mathcal{J}_\Delta\otimes_{\mathcal{O}_{G/\mathbf{B}\times G/\mathbf{B}}}\mathcal{L}(E_1)\boxtimes\mathcal{L}(E')\rightarrow\mathcal{L}(E_1)\boxtimes\mathcal{L}(E')\rightarrow \mathcal{L}(E_1)\otimes\mathcal{L}(E')\rightarrow0.$$ Hence we will have the long exact sequence $$\cdots\rightarrow \mathrm{H}^i(G/\mathbf{B}\times G/\mathbf{B},\mathcal{J}_\Delta\otimes\mathcal{L}(E_1)\boxtimes\mathcal{L}(E')) \rightarrow\mathrm{H}^i(G/\mathbf{B}\times G/\mathbf{B},\mathcal{L}(E_1)\boxtimes\mathcal{L}(E'))\rightarrow\atop \mathrm{H}^i(G/\mathbf{B},\mathcal{L}(E_1)\otimes\mathcal{L}(E'))\rightarrow\mathrm{H}^{i+1}(G/\mathbf{B}\times G/\mathbf{B},\mathcal{J}_\Delta\otimes\mathcal{L}(E_1)\boxtimes\mathcal{L}(E'))\rightarrow\cdots.$$ By Lemma \ref{LC}, $$\mathrm{H}^{i+m_1+\ldots+m_j}(G/\mathbf{B}\times G/\mathbf{B},\mathcal{L}(E_1)\boxtimes\mathcal{L}(E')) =\bigoplus\limits_{l+k=i+\atop m_1+\ldots+m_j}\mathrm{H}^l(G/\mathbf{B},\mathcal{L}(E_1))\otimes_\k\mathrm{H}^k (G/\mathbf{B},\mathcal{L}(E')),$$ then, let $l=l'+m_1\ldots+m_{j-1}$,
\begin{itemize}
\item if $k>m_j$, by \cite{11} (Theorem B) $$\mathrm{H}^k(G/\mathbf{B},\mathcal{L}(E'))=0,$$
\item and if $k\leqslant m_j$, then 
\begin{eqnarray*}
l\geqslant i+m_1+\ldots+m_{j-1}&\Rightarrow&l'\geqslant i\\
&\Rightarrow&n+1-l'\leqslant n+1-i\\
&\Rightarrow&(n+1-l',j-1)<(n+1-i,j).
\end{eqnarray*}
Hence, by induction hypothesis $$\mathrm{H}^l(G/\mathbf{B},\mathcal{L}(E_1))=0.$$ 
\end{itemize}
Whence $$\mathrm{H}^{i+m_1+\ldots+m_j}(G/\mathbf{B}\times G/\mathbf{B},\mathcal{L}(E_1)\boxtimes \mathcal{L}(E'))=0\mbox{, for }i>0,$$ then it remains to show that $$\mathrm{H}^{i+1+m_1+\ldots+m_j}(G/\mathbf{B}\times G/\mathbf{B},\mathcal{J}_\Delta\otimes\mathcal{L}(E_1)\boxtimes\mathcal{L}(E'))=0\mbox{, for }i>0$$ Let identify $G/\mathbf{B}$ with $\Delta$. The sequence $\mathcal{J}_\Delta^k$, $k\in\mathbb{N}^*$, is a descending filtration of $\mathcal{J}_\Delta$. Let denote by $\mathcal{G}_\Delta$ the associated graded sheaf to this module. Then $\mathcal{G}_\Delta$ is a sheaf of graded algebra over $G/\mathbf{B}$. Moreover, the sequence $\mathcal{J}_\Delta^k\otimes_{\mathcal{O}_{G/\mathbf{B}\times G/\mathbf{B}}}\mathcal{L}(E_1)\boxtimes\mathcal{L}(E')$, $k\in\mathbb{N}^*$, is a descending filtration of $\mathcal{J}_\Delta\otimes_{\mathcal{O}_{G/\mathbf{B}\times G/\mathbf{B}}}\mathcal{L}(E_1)\boxtimes\mathcal{L}(E')$ and the associated graded sheaf is isomorphic to $\mathcal{G}_\Delta\otimes_{\mathcal{O}_{G/\mathbf{B}\times G/\mathbf{B}}}\mathcal{L}(E_1)\boxtimes\mathcal{L}(E')$ since $\mathcal{L}(E_1)\boxtimes\mathcal{L}(E')$ is locally free.

According to (\cite{10}, Ch. II, Theorem 8,17), since $\Delta$ is a smooth subvariety of $G/\mathbf{B}\times G/\mathbf{B}$, the following short sequence $$0\rightarrow\mathcal{J}_\Delta/\mathcal{J}_\Delta^2\rightarrow\Omega_{G/\mathbf{B}\times G/\mathbf{B}}^1\otimes_{\mathcal{O}_{G/\mathbf{B}\times G/\mathbf{B}}}\mathcal{O}_{G/\mathbf{B}}\rightarrow \Omega_{G/\mathbf{B}}^1\rightarrow0$$ is exact. Since $\Omega^1_{G/\mathbf{B}}$ is canonically isomorphic to $\mathcal{L}(\mathfrak{u}^*)$, $\mathcal{J}_\Delta/\mathcal{J}_\Delta^2$ is canonically isomorphic to $\mathcal{L}(\mathfrak{u}^*)$. Then, according to (\cite{10}, Ch. II, Theorem 8,21A), $\mathcal{G}_\Delta$ is isomorphic to $\mathcal{L}(\mathrm{S}(\mathfrak{u}^*))$ since $\Delta$ is locally a complete intersection in $G/\mathbf{B}\times G/\mathbf{B}$. As a result, the associated graded module to the filtration on $\mathcal{G}_\Delta\otimes_{\mathcal{O}_{G/\mathbf{B}\times G/\mathbf{B}}}\mathcal{L}(E_1)\boxtimes\mathcal{L}(E')$ is ismorphic to the $\mathcal{O}_{G/\mathbf{B}}$-module $$\mathcal{L}(S(\mathfrak{u}^*)) \otimes_{\mathcal{O}_{G/\mathbf{B}}}\mathcal{L}(E_1)\otimes_{\mathcal{O}_{G/\mathbf{B}}}\mathcal{L}(E').$$ Hence it suffice to prove that $$\mathrm{H}^{i+1+m_1+\ldots+m_j}(G/\mathbf{B},\mathcal{L}(\mathrm{S}(\mathfrak{u}^*)) \otimes_{\mathcal{O}_{G/\mathbf{B}}}\mathcal{L}(E_1)\otimes_{\mathcal{O}_{G/\mathbf{B}}}\mathcal{L}(E'))=0\mbox{, for }i>0.$$ According to the identification of $\mathfrak{g}$ with its dual by the killing form, one has a short exact sequence $$0\rightarrow\mathfrak{b}\rightarrow\mathfrak{g}^* \rightarrow\mathfrak{u}^*\rightarrow0.$$ From this exact sequence, on deduces the exact Koszul comlpex $$\cdots\xrightarrow{\mathrm{d}}K_2\xrightarrow{\mathrm{d}}K_1 \xrightarrow{\mathrm{d}}K_0\rightarrow\mathrm{S}(\mathfrak{u}^*)\rightarrow0,$$ with $$K_p:=\mathrm{S}(\mathfrak{g}^*)\otimes_\k\wedge^p(\mathfrak{b}),$$ $$\mathrm{d}(a\otimes(a_0\wedge\ldots\wedge a_p)):=\sum\limits^{p}_{i=0}(-1)^ia_ia\otimes(a_0\wedge\ldots\wedge\hat{a}_i\wedge\ldots\wedge a_p).$$ This complex $K_\bullet$ is canonically graded by $$K_\bullet:=\displaystyle\sum_{q}K_\bullet^q\mbox{, where }K_p^q:=\mathrm{S}^{q-p}(\mathfrak{g^*})\otimes_\Bbbk \wedge^p(\mathfrak{b}).$$ So that the sequences $$\ldots\rightarrow K_2^q\rightarrow K_1^q\rightarrow K_0^q\rightarrow\mathrm{S}^q(\mathfrak{u}^*)\rightarrow 0\atop\cdots\rightarrow \tilde{K}_2^q\rightarrow\tilde{K}_1^q\rightarrow\tilde{K}_0^q\rightarrow\mathrm{S}(\mathfrak{u}^*)\otimes_\k E_1\otimes_\k E'\rightarrow0$$ are exact with $$\tilde{K}_p^q:=K_p^q\otimes_\k E_1\otimes_\k E',$$ for all $p$. Then, since $\mathcal{L}$ is an exact functor, one deduces the exact sequence of $\mathcal{O}_{G/\mathbf{B}}$-modules $$\cdots\rightarrow \mathcal{L}(\tilde{K}_2^q)\rightarrow\mathcal{L}(\tilde{K}_1^q)\rightarrow\mathcal{L}(\tilde{K}_0^q)\rightarrow\mathcal{L}(\mathrm{S}(\mathfrak{u}^*)\otimes_\k E_1\otimes_\k E')\rightarrow0.$$ Since $\mathrm{H}^\bullet$ is an exact $\delta$-functor, for $i>m_1+\ldots+m_j$, $$\mathrm{H}^{i+1}(G/\mathbf{B},\mathcal{L}(\mathrm{S}(\mathfrak{u}^*))\otimes_\k\mathcal{L}(E_1)\otimes_\k\mathcal{L}(E'))=0\mbox{, if }\mathrm{H}^{i+1+p}(G/\mathbf{B},\mathcal{L}(\tilde{K}_p^q))=0,$$ for all nonnegative integers $q$ and $p$, but, by induction hypothesis, $$\mathrm{H}^{i+1+p}(G/\mathbf{B},\mathcal{L}(\tilde{K}_p^q))=0\mbox{, for }i>m_1+\ldots+m_j,$$ whence the Proposition.
\end{proof}

Let $\mathfrak{v}$ be a subalgebra of $\mathfrak{b}$ containing $\mathfrak{u}$. For any $\mathbf{B}$-module $E$, we denoted by $\mathcal{M}(E)$ the quotient of $G\times\mathfrak{v}\times\mathfrak{v}\times E$ under the right action of $\mathbf{B}$ given by $$(g,x,y,v).b:=(gb,b^{-1}(x),b^{-1}(y),b^{-1}(v)).$$ Then $\mathcal{M}(E)$ is a fiber bundle over $G\times_\mathbf{B}(\mathfrak{v}\times\mathfrak{v})$.

\begin{prop}\label{P2} Let $F$ be the trivial $\mathbf{B}$-module of dimension $1$. Then $$\mathrm{H}^i(G\times_\mathbf{B}(\mathfrak{v}\times\mathfrak{v}),\mathcal{M}(F))=0\mbox{, }\forall i>0.$$
\end{prop}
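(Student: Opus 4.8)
The plan is to push the computation down to the flag variety $G/\mathbf{B}$ and then resolve the symmetric powers by Koszul complexes so that Proposition \ref{PC} applies directly. Since $F$ is the trivial one-dimensional $\mathbf{B}$-module, $\mathcal{M}(F)$ is the structure sheaf of $\mathbb{V}:=G\times_\mathbf{B}(\mathfrak{v}\times\mathfrak{v})$. The bundle projection $q$ from $\mathbb{V}$ onto $G/\mathbf{B}$ is an affine morphism, so $\mathrm{R}^kq_*\mathcal{O}_\mathbb{V}=0$ for $k>0$ and the Leray spectral sequence gives $\mathrm{H}^i(\mathbb{V},\mathcal{M}(F))=\mathrm{H}^i(G/\mathbf{B},q_*\mathcal{O}_\mathbb{V})$. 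As $\mathbb{V}$ is the vector bundle attached to the $\mathbf{B}$-module $\mathfrak{v}\times\mathfrak{v}$, one has $q_*\mathcal{O}_\mathbb{V}=\mathcal{L}(\mathrm{S}((\mathfrak{v}\times\mathfrak{v})^*))=\bigoplus_{a,b\geq0}\mathcal{L}(\mathrm{S}^a(\mathfrak{v}^*)\otimes_\k\mathrm{S}^b(\mathfrak{v}^*))$, using that $\mathcal{L}$ commutes with $\otimes$ and $\mathrm{S}$. Since $G/\mathbf{B}$ is noetherian, cohomology commutes with this direct sum, so it suffices to prove $\mathrm{H}^i(G/\mathbf{B},\mathcal{L}(\mathrm{S}^a(\mathfrak{v}^*)\otimes_\k\mathrm{S}^b(\mathfrak{v}^*)))=0$ for all $i>0$ and all $a,b\geq0$.

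Next I would describe the dual module. Identifying $\mathfrak{g}$ with $\mathfrak{g}^*$ through the Killing form, the $\mathbf{B}$-submodule $\mathfrak{v}^\perp:=\{f\in\mathfrak{g}^*\mid f|_\mathfrak{v}=0\}$ corresponds to the Killing-orthogonal of $\mathfrak{v}$. Writing $\mathfrak{v}=\mathfrak{s}\oplus\mathfrak{u}$ with $\mathfrak{s}$ a subspace of $\mathfrak{h}$, a direct computation with root spaces gives $\mathfrak{v}^\perp\cong\tilde{\mathfrak{v}}:=\mathfrak{s}^\perp\oplus\mathfrak{u}$, where $\mathfrak{s}^\perp$ is the orthogonal of $\mathfrak{s}$ inside $\mathfrak{h}$; in particular $\tilde{\mathfrak{v}}$ is again a subalgebra of $\mathfrak{b}$ containing $\mathfrak{u}$, so that Proposition \ref{PC} is available for $\tilde{\mathfrak{v}}$. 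From the short exact sequence $0\to\tilde{\mathfrak{v}}\to\mathfrak{g}^*\to\mathfrak{v}^*\to0$ I form, in each symmetric degree, the Koszul resolution of $\mathrm{S}^a(\mathfrak{v}^*)$ whose $p$-th term is $\wedge^p\tilde{\mathfrak{v}}\otimes_\k\mathrm{S}^{a-p}(\mathfrak{g}^*)$, and likewise for $\mathrm{S}^b(\mathfrak{v}^*)$.

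Then I would tensor these two Koszul resolutions over $\k$ and apply the exact functor $\mathcal{L}$, obtaining a resolution of $\mathcal{L}(\mathrm{S}^a(\mathfrak{v}^*)\otimes_\k\mathrm{S}^b(\mathfrak{v}^*))$ whose term in total Koszul degree $r$ is
$$\bigoplus_{p+p'=r}\mathrm{S}^{a-p}(\mathfrak{g}^*)\otimes_\k\mathrm{S}^{b-p'}(\mathfrak{g}^*)\otimes_\k\mathcal{L}(\wedge^p\tilde{\mathfrak{v}}\otimes_\k\wedge^{p'}\tilde{\mathfrak{v}}),$$
where the factors $\mathrm{S}^{a-p}(\mathfrak{g}^*)$ and $\mathrm{S}^{b-p'}(\mathfrak{g}^*)$ are pulled out of $\mathcal{L}$ because they are $G$-modules and hence give trivial sub-bundles. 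By Proposition \ref{PC} applied to $\tilde{\mathfrak{v}}$ with the two exterior factors, $\mathrm{H}^k(G/\mathbf{B},\mathcal{L}(\wedge^p\tilde{\mathfrak{v}}\otimes_\k\wedge^{p'}\tilde{\mathfrak{v}}))=0$ for $k>p+p'$, so the cohomology of the degree-$r$ term vanishes in all degrees strictly above $r$.

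Finally, since $\mathrm{H}^\bullet(G/\mathbf{B},-)$ is an exact $\delta$-functor and cohomology on $G/\mathbf{B}$ vanishes above $\dim G/\mathbf{B}$, a dimension-shifting argument along the resolution concludes: breaking it into short exact sequences and chasing the long exact sequences, $\mathrm{H}^i$ of the target injects successively into $\mathrm{H}^{i+r+1}$ of the $r$-th syzygy, which vanishes for $r$ large. The degree-$0$ term is $\mathcal{L}(\mathrm{S}^a(\mathfrak{g}^*)\otimes_\k\mathrm{S}^b(\mathfrak{g}^*))$, a trivial bundle whose cohomology is concentrated in degree $0$, and for $r\geq1$ the bound $k>r$ is exactly what keeps each shift alive when $i>0$. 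This gives $\mathrm{H}^i(G/\mathbf{B},\mathcal{L}(\mathrm{S}^a(\mathfrak{v}^*)\otimes_\k\mathrm{S}^b(\mathfrak{v}^*)))=0$ for $i>0$, whence the Proposition. The delicate points are the identification of $\mathfrak{v}^\perp$ with a subalgebra containing $\mathfrak{u}$ (without which Proposition \ref{PC} would not apply) and the bookkeeping matching the Koszul degree shift $r$ with the threshold $p+p'$ of Proposition \ref{PC}, so that the vanishing range is preserved through the dimension shift.
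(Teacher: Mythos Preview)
Your proof is correct and follows essentially the same route as the paper: push forward along the affine bundle map to $G/\mathbf{B}$, resolve $\mathrm{S}(\mathfrak{v}^*)$ by the Koszul complex attached to $0\to\mathfrak{v}^\perp\to\mathfrak{g}^*\to\mathfrak{v}^*\to0$, and reduce to Proposition~\ref{PC} applied to $\wedge^p\mathfrak{v}^\perp\otimes\wedge^{p'}\mathfrak{v}^\perp$. The only cosmetic difference is that the paper applies the Koszul reduction twice in succession (first with a generic coefficient module $E$, then with $E=\mathrm{S}(\mathfrak{v}^*)$) whereas you tensor the two resolutions at once, and you make explicit the identification $\mathfrak{v}^\perp=\mathfrak{s}^\perp\oplus\mathfrak{u}$ needed to invoke Proposition~\ref{PC}, which the paper leaves implicit.
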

\begin{proof} We denote by $\mathfrak{g}^*$ and $\mathfrak{v}^*$ the duals of $\mathfrak{g}$ and $\mathfrak{v}$ respectively, and we denote by $\psi$ the canonical morphism from $G\times_\mathbf{B}(\mathfrak{v}\times\mathfrak{v})$ to $G/\mathbf{B}$. Then $\mathcal{M}(F)$ is equal to $\psi^*(\mathcal{L}(F))$. As it is easy to see that $\psi_*(\mathcal{O}_{G\times_\mathbf{B}(\mathfrak{v}\times\mathfrak{v})})$ is equal to $\mathcal{L}(\mathfrak{v}^*\times\mathfrak{v}^*)$. So we have the equality:$$\psi_*\psi^*(\mathcal{L}(F))=\psi_*(\mathcal{O}_{G\times_\mathbf{B}(\mathfrak{v}\times\mathfrak{v})}) \otimes_{\mathcal{O}_{G/\mathbf{B}}}\mathcal{L}(F)=\displaystyle\sum_{q\in\mathbb{N}} \mathcal{L}(\mathrm{S}^q(\mathfrak{v}^*\times\mathfrak{v}^*)).$$ From this equality, we deduce the equality $$\mathrm{H}^i(G\times_\mathbf{B}(\mathfrak{v}\times\mathfrak{v}),\mathcal{M}(F)) =\displaystyle\sum_{q\in\mathbb{N}} \mathrm{H}^i(G/\mathbf{B},\mathcal{L}(\mathrm{S}^q(\mathfrak{v}^*\times \mathfrak{v}^*))),\mbox{   }(*)$$ for any nonnegative integer $i$.\\
Let $E$ be a $\mathbf{B}$-module which is a direct sum of finite dimensional $\mathbf{B}$-modules and let $M_E$ be the $\mathrm{S}(\mathfrak{g}^*)$-module $$M_E:=\mathrm{S}(\mathfrak{g}^*)\otimes_\Bbbk E.$$Since $\mathfrak{g}$ and $\mathfrak{g}^*$ are identified by $\<.,.\>$, the kernel of the canonical projection from $\mathfrak{g}^*$ to $\mathfrak{v}^*$ is equal to $\mathfrak{k}$, whenever $\mathfrak{k}$ is the orthogonal complement of $\mathfrak{v}$ in $\mathfrak{g}$. Then the Koszul complex,$$\ldots\rightarrow K_2(E)\rightarrow K_1(E)\rightarrow K_0(E)\rightarrow\mathrm{S}(\mathfrak{v}^*)\otimes_\Bbbk E\rightarrow 0$$ defined by: $$K_n(E):=M_E\otimes_\Bbbk\wedge^n(\mathfrak{k})\mbox{ , }\mathrm{d}:K_{n+1}(E)\rightarrow K_n(E)\atop \mathrm{d}(m\otimes(a_0\wedge\ldots\wedge a_n))=\displaystyle\sum_{i=0}^n(-1)^ia_im\otimes(a_0\wedge\ldots\wedge\hat a_i\wedge\ldots\wedge a_n),$$is exact. This complex $K_\bullet(E)$ is canonically graded by $$K_\bullet(E):=\displaystyle\sum_{q}K_\bullet^q(E)\mbox{, where }K_n^q(E):=\mathrm{S}^{q-n}(\mathfrak{g^*})\otimes_\Bbbk E\otimes_\Bbbk \wedge^n(\mathfrak{k}).$$ Thus we have the long exact sequence of $\mathbf{B}$-modules $$\ldots\rightarrow K_2^q(E)\rightarrow K_1^q(E)\rightarrow K_0^q(E)\rightarrow\mathrm{S}^q(\mathfrak{v}^*)\otimes_\Bbbk E\rightarrow 0.$$ Since $\mathrm{H}^\bullet$ is an exact $\delta$-functor, for $i$ nonnegative integer, $$\mathrm{H}^i(G/\mathbf{B},\mathcal{L}(\mathrm{S}(\mathfrak{v}^*)\otimes_\Bbbk E))=0\mbox{, if }\mathrm{H}^{i+n}(G/\mathbf{B},\mathcal{L}(K_n^q(E)))=0,$$ for any nonnegative integers $q$ and $n$.

Since $\mathrm{S}^{q-n}(\mathfrak{g}^*)$ is a $G$-module, the $\mathcal{O}_{G/\mathbf{B}}$-modules $\mathcal{L}(\mathrm{S}^{q-n}(\mathfrak{g}^*)\otimes_\Bbbk E\otimes_\Bbbk\wedge^n(\mathfrak{k}))$ and $\mathrm{S}^{q-n}(\mathfrak{g}^*)\otimes_\Bbbk\mathcal{L}(E\otimes_\Bbbk\wedge^n(\mathfrak{u}))$ are isomorphic. Consequently, $$\mathrm{H}^{i+n}(G/\mathbf{B},\mathcal{L}(K_n^q(E)))=\mathrm{S}^{q-n}(\mathfrak{g}^*)\otimes_\Bbbk\mathrm{H}^{i+n}(G/\mathbf{B},\mathcal{L}(E\otimes_\Bbbk\wedge^n(\mathfrak{k}))).$$ As a result, $$\mathrm{H}^i(G/\mathbf{B},\mathcal{L}(\mathrm{S}(\mathfrak{v}^*)\otimes_\Bbbk E))=0\mbox{, if }\mathrm{H}^{i+n}(G/\mathbf{B},\mathcal{L}(E\otimes_\Bbbk\wedge^n(\mathfrak{k})))=0,$$ for any nonnegative integer $n$.

Let us consider for $\mathbf{B}$-module $E$ the $\mathbf{B}$-module $\mathrm{S}(\mathfrak{v}^*)$. Then by which goes before, $$\mathrm{H}^i(G/\mathbf{B},\mathcal{L}(\mathrm{S}(\mathfrak{v}^*)\otimes\mathrm{S}(\mathfrak{v}^*)))=0\mbox{, if }\mathrm{H}^{i+n+m}(G/\mathbf{B},\mathcal{L}(\wedge^n(\mathfrak{k})\otimes_\Bbbk\wedge^m(\mathfrak{k})))=0,$$ for any nonnegative integers $n$ and $m$. By Proposition \ref{PC}, $$\mathrm{H}^{i+n+m}(G/\mathbf{B},\mathcal{L}(\wedge^n(\mathfrak{k})\otimes_\Bbbk\wedge^m(\mathfrak{k})))=0\mbox{, }\forall i>0,$$ then by $(*)$, $$\mathrm{H}^i(G\times_\mathbf{B}(\mathfrak{v}\times\mathfrak{v}),\mathcal{M}(F))=0\mbox{, }\forall i>0.$$ 
\end{proof}

\begin{theo} We have the equalities:$$(\vartheta_\n)_*(\mathcal{O}_{G\times_\mathbf{B}(\mathfrak{u}\times\mathfrak{u})})=\mathcal{O}_{\mathcal{N}_\n}\mbox{, }(\gamma_\n)_*(\mathcal{O}_{G\times_\mathbf{B}(\mathfrak{b}\times\mathfrak{b})})=\mathcal{O}_{{\mathcal B}_{\mathfrak{g},\n}}$$ and  for $i>0$, $$\mathrm{H}^i(G\times_\mathbf{B}(\mathfrak{u}\times\mathfrak{u}),\mathcal{O}_{G\times_\mathbf{B}(\mathfrak{u}\times\mathfrak{u})}) =0\mbox{ and }\mathrm{H}^i(G\times_\mathbf{B}(\mathfrak{b}\times\mathfrak{b}),\mathcal{O}_{G\times_\mathbf{B}(\mathfrak{b}\times\mathfrak{b})}) =0.$$
\end{theo}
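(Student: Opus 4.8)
The plan is to obtain all four assertions essentially for free from the machinery already in place, with no new computation. The cohomology vanishings are a direct specialization of Proposition \ref{P2}, and the two sheaf equalities follow from the standard normality principle that a proper birational morphism onto a normal variety has trivial direct image of its structure sheaf.

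\emph{Cohomology vanishing.} First I would note that the two vanishing statements are precisely Proposition \ref{P2} applied to $\mathfrak{v}=\mathfrak{u}$ and to $\mathfrak{v}=\mathfrak{b}$. Indeed, since $F$ is the trivial one-dimensional $\mathbf{B}$-module, $\mathcal{L}(F)=\mathcal{O}_{G/\mathbf{B}}$, so $\mathcal{M}(F)=\psi^*\mathcal{L}(F)=\mathcal{O}_{G\times_\mathbf{B}(\mathfrak{v}\times\mathfrak{v})}$, exactly as recorded in the proof of Proposition \ref{P2}. As $\mathfrak{u}$ and $\mathfrak{b}$ are both subalgebras of $\mathfrak{b}$ containing $\mathfrak{u}$, Proposition \ref{P2} applies to each and gives $\mathrm{H}^i(G\times_\mathbf{B}(\mathfrak{u}\times\mathfrak{u}),\mathcal{O})=0$ and $\mathrm{H}^i(G\times_\mathbf{B}(\mathfrak{b}\times\mathfrak{b}),\mathcal{O})=0$ for all $i>0$.

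\emph{The two equalities.} Recall that $\vartheta_\n$ and $\gamma_\n$ were constructed as the lifts of $\vartheta$ and $\gamma$ to the normalizations, with $\tau\circ\vartheta_\n=\vartheta$ and $\eta\circ\gamma_\n=\gamma$. Since $\vartheta$ and $\gamma$ are proper and birational while $\tau$ and $\eta$ are finite and birational, a cancellation argument (properness of the composite together with separatedness of the finite maps, and composition of birational maps) shows that $\vartheta_\n$ and $\gamma_\n$ are themselves proper and birational; this was already observed for $\vartheta_\n$ in the proof that $\tau$ is bijective. Their sources $G\times_\mathbf{B}(\mathfrak{u}\times\mathfrak{u})$ and $G\times_\mathbf{B}(\mathfrak{b}\times\mathfrak{b})$ are vector bundles over $G/\mathbf{B}$, hence smooth and integral, and their targets $\mathcal{N}_\n$ and ${\mathcal B}_{\mathfrak{g},\n}$ are normal by construction.

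Then for any proper birational morphism $f\colon Z\to Y$ with $Z$ integral and $Y$ normal, the sheaf $f_*\mathcal{O}_Z$ is coherent over $\mathcal{O}_Y$ (properness) and is a sheaf of $\mathcal{O}_Y$-subalgebras of the function field $K(Z)=K(Y)$; being coherent it is integral over $\mathcal{O}_Y$, so normality of $Y$ forces $f_*\mathcal{O}_Z\subseteq\mathcal{O}_Y$, while the reverse inclusion is automatic, whence $f_*\mathcal{O}_Z=\mathcal{O}_Y$. Equivalently, one passes to the Stein factorization of $f$ and invokes Zariski's main theorem (\cite{21}), exactly the device already used for $\chi$ in Proposition \ref{P} and for $\theta$ in Proposition \ref{PSIBn}. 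Applying this with $f=\vartheta_\n$, $Y=\mathcal{N}_\n$ and with $f=\gamma_\n$, $Y={\mathcal B}_{\mathfrak{g},\n}$ yields the two desired equalities. I do not expect a genuine obstacle at this stage: all the hard work has been absorbed into Proposition \ref{P2} (and through it into Proposition \ref{PC} and the Koszul resolutions), so the only points needing care are the identification $\mathcal{M}(F)=\mathcal{O}$ and the verification that $\vartheta_\n$ and $\gamma_\n$ remain proper and birational so that the normality argument applies.
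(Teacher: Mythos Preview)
Your proposal is correct and follows essentially the same approach as the paper. The paper's proof is very terse: it simply invokes normality of $\mathcal{N}_\n$ and ${\mathcal B}_{\mathfrak{g},\n}$ for the two direct-image equalities and applies Proposition \ref{P2} (with the identification $\mathcal{O}=\psi^*\mathcal{L}(F)$) for the vanishing; you have merely spelled out in more detail the standard proper-birational-onto-normal argument that the paper leaves implicit.
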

\begin{proof} Since $\mathcal{N}_\n$ and ${\mathcal B}_{\mathfrak{g},\n}$ are normal, $$(\vartheta_\n)_*(\mathcal{O}_{G\times_\mathbf{B}(\mathfrak{u}\times\mathfrak{u})})=\mathcal{O}_{\mathcal{N}_\n}\mbox{ and }(\gamma_\n)_*(\mathcal{O}_{G\times_\mathbf{B} (\mathfrak{b}\times\mathfrak{b})})=\mathcal{O}_{{\mathcal{B}_{\mathfrak{g},\n}}}.$$ By Proposition \ref{P2}, $$\mathrm{H}^i(G\times_\mathbf{B}(\mathfrak{u}\times\mathfrak{u}),\mathcal{O}_{G\times_\mathbf{B}(\mathfrak{u}\times\mathfrak{u})}) =\mathrm{H}^i(G\times_\mathbf{B}(\mathfrak{u}\times\mathfrak{u}),\mathcal{M}(E(0)))=0\mbox{, for }i>0$$ since $\mathcal{O}_{G\times_\mathbf{B}(\mathfrak{u}\times\mathfrak{u})}=\psi^*(\mathcal{O}_{G/\mathbf{B}})=\psi^*(\mathcal{L}(E(0)))$, and $$\mathrm{H}^i(G\times_\mathbf{B}(\mathfrak{b}\times\mathfrak{b}),\mathcal{O}_{G\times_\mathbf{B}(\mathfrak{b}\times\mathfrak{b})}) =\mathrm{H}^i(G\times_\mathbf{B}(\mathfrak{b}\times\mathfrak{b}),\mathcal{M}(E(0)))=0\mbox{, for }i>0$$ since $\mathcal{O}_{G\times_\mathbf{B}(\mathfrak{b}\times\mathfrak{b})}=\psi^*(\mathcal{O}_{G/\mathbf{B}})=\psi^*(\mathcal{L}(E(0)))$.
\end{proof}

Since $\vartheta_\n$ and $\gamma_\n$ are desingularisation morphisms of $\mathcal{N}_\n$ and ${\mathcal B}_{\mathfrak{g},\n}$ respectively, the corollary is a consequence of the theorem.

\begin{cor} The normalizations $\mathcal{N}_\n$ and ${\mathcal B}_{\mathfrak{g},\n}$ of $\mathcal{N}$ and $\mathcal{B}_\mathfrak{g}$ respectively have rational singularities.
\end{cor}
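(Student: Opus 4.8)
The plan is to verify rational singularities directly from the definition, using the desingularizations $\vartheta_\n$ and $\gamma_\n$ produced above. Recall that a normal variety $Y$ has rational singularities if for some proper birational morphism $f\colon Z\to Y$ with $Z$ smooth one has $f_*\mathcal{O}_Z=\mathcal{O}_Y$ and $\mathrm{R}^if_*\mathcal{O}_Z=0$ for all $i>0$. In our situation $Z=G\times_\mathbf{B}(\mathfrak{u}\times\mathfrak{u})$ with $f=\vartheta_\n$ for $\mathcal{N}_\n$, and $Z=G\times_\mathbf{B}(\mathfrak{b}\times\mathfrak{b})$ with $f=\gamma_\n$ for ${\mathcal B}_{\mathfrak{g},\n}$. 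In each case $Z$ is a vector bundle over $G/\mathbf{B}$, hence smooth, and $\vartheta_\n$, $\gamma_\n$ are proper and birational, so these are genuine resolutions. The condition $f_*\mathcal{O}_Z=\mathcal{O}_Y$ is exactly the first pair of equalities established in the Theorem above (and reflects the normality of $\mathcal{N}_\n$ and ${\mathcal B}_{\mathfrak{g},\n}$). Thus the entire task reduces to deducing the vanishing of the higher direct images $\mathrm{R}^if_*\mathcal{O}_Z$ from the vanishing of the absolute cohomology groups $\mathrm{H}^i(Z,\mathcal{O}_Z)$ supplied by the Theorem.

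The crucial observation I would record is that the targets are affine: $\mathcal{N}$ and $\mathcal{B}_\mathfrak{g}$ are closed subvarieties of the affine space $\mathfrak{g}\times\mathfrak{g}$, so they are affine, and their normalizations $\mathcal{N}_\n$ and ${\mathcal B}_{\mathfrak{g},\n}$ are affine as well. I would then invoke the Leray spectral sequence for $f$ applied to the quasi-coherent sheaf $\mathcal{O}_Z$, $$\mathrm{E}_2^{p,q}=\mathrm{H}^p(Y,\mathrm{R}^qf_*\mathcal{O}_Z)\Longrightarrow\mathrm{H}^{p+q}(Z,\mathcal{O}_Z).$$ Since $Y$ is affine and each $\mathrm{R}^qf_*\mathcal{O}_Z$ is quasi-coherent, the rows with $p>0$ vanish, the spectral sequence degenerates at the second page, and it yields for every $i$ the identification $$\mathrm{H}^i(Z,\mathcal{O}_Z)=\Gamma(Y,\mathrm{R}^if_*\mathcal{O}_Z).$$

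It then remains to pass from vanishing of global sections to vanishing of the sheaves. Because $f$ is proper and $\mathcal{O}_Z$ is coherent, Grothendieck's coherence theorem gives that each $\mathrm{R}^if_*\mathcal{O}_Z$ is a coherent $\mathcal{O}_Y$-module; as $Y$ is affine, such a sheaf is the one associated to its module of global sections and hence vanishes as soon as its global sections do. Combining this with the display above and with $\mathrm{H}^i(Z,\mathcal{O}_Z)=0$ for $i>0$ from the Theorem, I conclude $\mathrm{R}^if_*\mathcal{O}_Z=0$ for all $i>0$, for both resolutions. Together with normality and $f_*\mathcal{O}_Z=\mathcal{O}_Y$, this establishes that $\mathcal{N}_\n$ and ${\mathcal B}_{\mathfrak{g},\n}$ have rational singularities. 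The one step that genuinely requires care is this transition from absolute cohomology to higher direct images: affineness of the base is precisely what licenses it, so I would be careful to note that $\mathcal{N}$ and $\mathcal{B}_\mathfrak{g}$ are affine before applying the spectral sequence.
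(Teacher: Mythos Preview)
Your proposal is correct and follows the same approach as the paper: the paper simply remarks that since $\vartheta_\n$ and $\gamma_\n$ are desingularizations, the corollary is an immediate consequence of the Theorem, leaving implicit the standard passage (via affineness of the target and the Leray spectral sequence) from the vanishing of $\mathrm{H}^i(Z,\mathcal{O}_Z)$ to the vanishing of $\mathrm{R}^if_*\mathcal{O}_Z$. You have spelled out precisely this passage, so there is nothing to add.
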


\appendix 
\section{ }
Recall that $\mathfrak{g}$ is simple and $\mathcal{R}$, $\mathcal{R}_+$, $\Pi$ are the root data. We denote by $\beta_1,\ldots,\beta_{\rk\mathfrak{g}}$ the elements of $\Pi$ ordered as in \cite{3} and by $\rho$ the half sum of positive roots. In particular, if $\mathfrak{g}$ has not type $D_{\rk\mathfrak{g}}$ or $\E$, $\beta_i$ is not orthogonal to $\beta_{i+1}$ for $i=1,\ldots,\rk\mathfrak{g}-1$. When $\mathfrak{g}$ has type $\D_{\rk\mathfrak{g}}$, $\beta_{\rk\mathfrak{g}-1}$ is orthogonal to $\beta_{\rk\mathfrak{g}}$. When $\mathfrak{g}$ has type $\E$, $\beta_2$ is not orthogonal to $\beta_4$ and $\beta_3$ is not orthogonal to $\beta_1$ and $\beta_4$. For $\beta$ in $\Pi$, the reflexion associated to $\beta$ is denoted by $s_\beta$.\\

\begin{Lemme}\label{L6.1} We suppose $\mathfrak{g}$ simple. Let $\alpha$ be in $\mathcal{R}_+$.
\begin{itemize}
\item[(i)] If $\alpha$ is simple, then $s_\alpha(\rho-\alpha)$ is regular dominant.
\item[(ii)] If $\alpha$ is not simple, then $\rho-\alpha$ is not regular.
\item[(iii)] If $\alpha$ is the biggest root then $\rho+\alpha$ is regular dominant.
\item[(iv)] If $\alpha$ is not the biggest root then $\rho+\alpha$ is not regular when $\mathfrak{g}$ has type $\A_{\rk\mathfrak{g}}$, $\E_6$, $\E_7$, $\E_8$.
\item[(v)] If $\alpha$ is not the biggest root:
\begin{itemize}
\item[(a)] for $\mathfrak{g}$ of type $\B_{\rk\mathfrak{g}}$, $\F_4$, $\G_2$, $\rho+\alpha$ is regular for two values of $\alpha$ and when it is not dominant there exists a simple root $\beta$ such that $s_\beta(\rho+\alpha)$ is dominant,
\item[(b)] for $\mathfrak{g}$ of type $\C_{\rk\mathfrak{g}}$, $\D_{\rk\mathfrak{g}}$, $\rho+\alpha$ is regular for one value of $\alpha$ and for this value it is dominant.
\end{itemize}
 \end{itemize}
\end{Lemme}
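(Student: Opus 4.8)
The plan is to express every condition through pairings with coroots, using that $\langle\rho,\beta^\vee\rangle=1$ for $\beta$ simple and, for any positive root $\gamma$, $\langle\rho,\gamma^\vee\rangle=\mathrm{ht}(\gamma^\vee)$, the height of $\gamma^\vee$ in the simple coroots. I will use repeatedly that a weight is regular iff it pairs nonzero with every coroot, that a dominant weight is regular iff it is strictly dominant, and the identity $\langle\rho\pm\alpha,\gamma^\vee\rangle=\mathrm{ht}(\gamma^\vee)\pm\langle\alpha,\gamma^\vee\rangle$. Parts (i) and (iii) are then immediate. For (i), $\alpha$ simple gives $s_\alpha\rho=\rho-\alpha$ and $s_\alpha\alpha=-\alpha$, so $s_\alpha(\rho-\alpha)=\rho$, which is strictly dominant. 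For (iii), the highest root $\theta$ is dominant, so $\langle\rho+\theta,\beta^\vee\rangle=1+\langle\theta,\beta^\vee\rangle\geq1$ for all simple $\beta$; hence $\rho+\theta$ is strictly dominant, i.e. regular dominant.

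For (ii) I would argue by norms. Since $\alpha$ is not simple, neither is $\alpha^\vee$, so $\mathrm{ht}(\alpha^\vee)\geq2$; as $(\rho,\alpha)=\tfrac12\|\alpha\|^2\,\mathrm{ht}(\alpha^\vee)$, $$\|\rho-\alpha\|^2=\|\rho\|^2+\|\alpha\|^2\bigl(1-\mathrm{ht}(\alpha^\vee)\bigr)<\|\rho\|^2.$$ Every strictly dominant weight has the form $\rho+\lambda$ with $\lambda$ dominant, and since the fundamental weights pair non-negatively one gets $(\rho,\lambda)\geq0$, whence $\|\rho+\lambda\|\geq\|\rho\|$. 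Because $W$ preserves the norm and every regular weight is $W$-conjugate to a strictly dominant one, the inequality $\|\rho-\alpha\|<\|\rho\|$ forbids $\rho-\alpha$ from being regular.

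For (iv) the point is to decide, for $\alpha$ not the highest root, whether some positive $\gamma$ satisfies $\langle\alpha,\gamma^\vee\rangle=-\mathrm{ht}(\gamma^\vee)$. In the simply-laced types $\langle\alpha,\gamma^\vee\rangle\geq-1$ for $\alpha\neq\gamma$, with equality exactly when $\gamma$ is simple and $\alpha+\gamma$ is a root; so only simple $\gamma$ are dangerous and $\rho+\alpha$ is singular iff $\alpha$ can be raised by a simple root, i.e. iff $\alpha$ is not the unique dominant root $\theta$. This settles (iv) for $\A_{\rk\mathfrak{g}},\E_6,\E_7,\E_8$ at once (and, since it too is simply-laced, also $\D_{\rk\mathfrak{g}}$ — a point I would re-examine against the placement of $\D_{\rk\mathfrak{g}}$ in (v)(b)).

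In the non-simply-laced types a short root $\gamma$ with $\mathrm{ht}(\gamma^\vee)\geq2$ can also be dangerous, so here I would proceed type by type from Bourbaki's explicit positive roots, computing the coroot heights as above. The main obstacle is precisely this bookkeeping in (v). I expect the highest short root always to give a strictly dominant (hence regular) $\rho+\alpha$; for $\B_{\rk\mathfrak{g}},\F_4,\G_2$ a second root (in $\G_2$ the long simple root) to give a regular but non-dominant $\rho+\alpha$ that a single simple reflection makes dominant; and for $\C_{\rk\mathfrak{g}}$ only the highest short root to survive, already dominant. Checking on $\G_2$ and on $\C_3$ already reproduces this pattern. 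The hard part is showing that no further $\alpha$ is regular: for each remaining positive root one must exhibit a coroot $\gamma^\vee$ with $\langle\alpha,\gamma^\vee\rangle=-\mathrm{ht}(\gamma^\vee)$, which I would organize using the orthogonality relations among consecutive simple roots recorded at the start of the appendix.
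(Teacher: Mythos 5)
Your arguments for (i) and (iii) coincide with the paper's. For (ii) and (iv) you take a genuinely different and more conceptual route. The paper proves (ii) by an exhaustive, type-by-type search for a simple root $\beta$ with $s_\beta(\rho-\alpha)=\rho-\alpha$, including explicit tables for $\E_6$, $\E_7$, $\E_8$ and $\F_4$; your norm inequality $\|\rho-\alpha\|^2=\|\rho\|^2+\|\alpha\|^2\bigl(1-\mathrm{ht}(\alpha^\vee)\bigr)<\|\rho\|^2$, combined with $\|\mu\|\geqslant\|\rho\|$ for strictly dominant integral $\mu$ and $W$-invariance of the norm, disposes of all types at once. It is correct; the only points worth making explicit are that $\alpha\mapsto\alpha^\vee$ carries non-simple positive roots to non-simple positive coroots, and that $(\rho,\lambda)\geqslant0$ for $\lambda$ dominant because the inverse Cartan matrix has nonnegative entries. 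Similarly, your observation that in a simply-laced system $\langle\rho+\alpha,\gamma^\vee\rangle=0$ forces $\gamma$ simple with $\alpha+\gamma$ a root, so that $\rho+\alpha$ is regular exactly when $\alpha$ is the unique dominant root, proves (iv) uniformly where the paper again uses tables. Your parenthetical worry about $\D_{\rk\mathfrak{g}}$ is well founded: your argument applies verbatim there and shows no $\alpha$ other than the highest root makes $\rho+\alpha$ regular, and indeed the ``one value'' the paper exhibits in its proof of (v)(b) for $\D_{\rk\mathfrak{g}}$, namely $\beta_1+2(\beta_2+\cdots+\beta_{\rk\mathfrak{g}-2})+\beta_{\rk\mathfrak{g}-1}+\beta_{\rk\mathfrak{g}}$, is the highest root of $\D_{\rk\mathfrak{g}}$, so that case contradicts the hypothesis of (v) and really belongs with (iv).

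The genuine gap is (v). You give the correct reduction (decide, for each positive $\alpha$, whether some $\gamma$ satisfies $\langle\alpha,\gamma^\vee\rangle=-\mathrm{ht}(\gamma^\vee)$), the correct candidates (the highest short root, always regular dominant; for $\B_{\rk\mathfrak{g}}$, $\F_4$, $\G_2$ a second root one simple reflection away from it; for $\C_{\rk\mathfrak{g}}$ only the highest short root), and spot checks in $\G_2$ and $\C_3$ --- but no proof that every remaining $\alpha$ is singular. That finite verification is precisely what the paper's proof of (v) consists of: it parametrizes all remaining positive roots of types $\B$, $\C$, $\D$, $\F_4$, $\G_2$ and exhibits for each either a simple root $\beta$ with $s_\beta(\rho+\alpha)=\rho+\alpha$ or a reflection carrying $\rho+\alpha$ to a weight already shown to be singular. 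Until you carry out that enumeration (a short uniform computation in coordinates for $\B_{\rk\mathfrak{g}}$ and $\C_{\rk\mathfrak{g}}$, a table for $\F_4$, three roots for $\G_2$), part (v) remains a plan supported by examples rather than a proof.
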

\begin{proof} (i) Let $\alpha$ be a simple root. Then $s_\alpha(\rho)=\rho-\alpha$. So $s_\alpha(\rho-\alpha)$ is regular dominant since $\rho$ is regular dominant.\\

(iii) Let $\alpha$ be the biggest root. Then for any positive root $\gamma$, $\<\alpha,\gamma\>$ is nonnegative. Consequently, $\rho+\alpha$ is regular and dominant since  $\<\rho,\beta\>$ is positive for any $\beta$ in $\Pi$.\\ 

(ii) {\it If $\alpha$ is not simple, then $\rho-\alpha$ is not regular.}\\

If $\beta_{i_1},\ldots,\beta_{i_l}$ are pairwise different elements of $\Pi$ such that $\beta_{i_j}$ is not orthogonal to $\beta_{i_{j+1}}$ for $j=1,\ldots,l-1$ and so that$$s_{\beta_{i_l}}(\beta_{i_{l-1}})=\beta_{i_{l-1}}+\beta_{i_l},$$ then$$s_{\beta_{i_l}}(\rho-(\beta_{i_1}+\cdots+\beta_{i_l}))=\rho-(\beta_{i_1}+\cdots+\beta_{i_l}),$$ since $s_{\beta_{i_l}}(\rho)=\rho-\beta_{i_l}$. Hence $\rho-\alpha$ is not regular if $\alpha$ is the sum of pairwise different simple roots since we can order them so that the last condition is satisfied. We denote by $\mathcal{R}'_+$ the subset of positive roots whose coordinates in the basis $\Pi$ are not all at most to 1. Then it remains to prove that $\rho-\alpha$ is not regular for any $\alpha$ in $\mathcal{R}'_+$. For that purpose, it will be enough to find $\beta$ in $\Pi$ such that $s_\beta(\rho-\alpha)=\rho-\alpha$. We then consider the different possible cases.\\

1) $\mathfrak{g}$ has type $\B_{\rk\mathfrak{g}}$ and $$\alpha=\beta_i+\cdots+\beta_k+2(\beta_{k+1}+\cdots+ \beta_{\rk\mathfrak{g}}),$$ for $1\leqslant i\leqslant k<\rk\mathfrak{g}$. If $i<k$, then $$s_{\beta_i}(\rho-\alpha)=\rho-\alpha.$$ If $i=k<\rk\mathfrak{g}-1$ then $$s_{\beta_{i+1}}(\rho-\alpha)=\rho-\beta_{i+1}-(\beta_i+\beta_{i+1})+2\beta_{i+1}-2(\beta_{i+1}+\beta_{i+2}) \atop-(\alpha-\beta_i-2\beta_{i+1}-2\beta_{i+2})=\rho-\alpha.$$ If $\alpha=\beta_{\rk\mathfrak{g}-1}+2 \beta_{\rk\mathfrak{g}}$, then $$s_{\beta_{\rk\mathfrak{g}}}(\rho-\alpha)=\rho-\beta_{\rk\mathfrak{g}}- (\beta_{\rk\mathfrak{g}-1}+2\beta_{\rk\mathfrak{g}})+2\beta_{\rk\mathfrak{g}}\atop=\rho-\beta_{\rk\mathfrak{g}-1}-
\beta_{\rk\mathfrak{g}}.$$ In this case $\rho-\alpha$ is not regular since $\rho-\beta_{\rk\mathfrak{g}-1}- \beta_{\rk\mathfrak{g}}$ is not regular by which goes before.\\

2) $\mathfrak{g}$ has type $\C_{\rk\mathfrak{g}}$. Let us suppose$$\alpha=\beta_i+\cdots+\beta_k+2(\beta_{k+1}+\cdots+ \beta_{\rk\mathfrak{g}-1})+\beta_{\rk\mathfrak{g}},$$for $1\leqslant i\leqslant k<\rk\mathfrak{g}$. If $i<k$, then $$s_{\beta_i}(\rho-\alpha)=\rho-\alpha.$$  If $i=k$ then $$s_{\beta_{i+1}}(\rho-\alpha)=\rho-\beta_{i+1}-(\beta_i+\beta_{i+1})+2\beta_{i+1}-2(\beta_{i+1}+\beta_{i+2}) \atop-(\alpha-\beta_i-2\beta_{i+1}-2\beta_{\i+2})=\rho-\alpha.$$ Let us suppose $$\alpha=2(\beta_k+\cdots+ \beta_{\rk\mathfrak{g}-1})+\beta_{\rk\mathfrak{g}},$$ for $1\leqslant k<\rk\mathfrak{g}$. If $k<\rk\mathfrak{g}-1$, then $$s_{\beta_k}(\rho-\alpha)=\rho-\beta_k-2(\beta_{k+1}+\cdots+ \beta_{\rk\mathfrak{g}-1})-\beta_{\rk\mathfrak{g}}.$$ Hence $\rho-\alpha$ is not regular since the right hand side of the last equality is not regular. If $k=\rk\mathfrak{g}-1$, then $$s_{\beta_{\rk\mathfrak{g}-1}}(\rho-\alpha)=\rho-\beta_{\rk\mathfrak{g}-1}-\beta_{\rk\mathfrak{g}}.$$ Hence $\rho-!
 \alpha$ is not regular since $\rho-\beta_{\rk\mathfrak{g}-1}-\beta_{\rk\mathfrak{g}}$ is not regular.\\

3) $\mathfrak{g}$ has type $\D_{\rk\mathfrak{g}}$. Let us suppose$$\alpha=\beta_i+\ldots+\beta_k+2(\beta_{k+1}+\ldots+ \beta_{\rk\mathfrak{g}-2})+\beta_{\rk\mathfrak{g}}+\beta_{\rk\mathfrak{g}-1},$$for $1\leqslant i\leqslant k<\rk\mathfrak{g}-2$. If $i<k$, then $$s_{\beta_i}(\rho-\alpha)=\rho-\alpha.$$ If $i=k<\rk\mathfrak{g}-3$ then $$s_{\beta_{i+1}}(\rho-\alpha)=\rho-\beta_{i+1}-(\beta_i+\beta_{i+1})+2\beta_{i+1}-2(\beta_{i+1}+\beta_{i+2}) \atop-(\alpha-\beta_i-2\beta_{i+1}-2\beta_{i+2})=\rho-\alpha.$$ If $i=k=\rk\mathfrak{g}-3$, then $$s_{\beta_{\rk\mathfrak{g}-2}}(\rho-\alpha)=\rho-\beta_{\rk\mathfrak{g}-2}-(\beta_{\rk\mathfrak{g}-3} +\beta_{\rk\mathfrak{g}-2})+2\beta_{\rk\mathfrak{g}-2}\atop-(\beta_{\rk\mathfrak{g}-1}+\beta_{\rk\mathfrak{g}-2}) -(\beta_{\rk\mathfrak{g}}+\beta_{\rk\mathfrak{g}-2})=\rho-\alpha.$$ So in any case, $\rho-\alpha$ is not regular.\\

4) $\mathfrak{g}$ has type $\E$. For $i=1,\ldots,\rk\mathfrak{g}$, we denote by $J_i$ the subset of $j$ in $\{1,\ldots,\rk\mathfrak{g}\}\backslash\{i\}$ such that $\beta_j$ is not orthogonal to $\beta_i$. Then $|J_i|\leqslant2$ if $i\neq4$ and $|J_4|=3$. For $i=1,\ldots,\rk\mathfrak{g}$, we denote by $n_i$ the coordinate of $\alpha$ at $\beta_i$ in the basis $\Pi$ and we set $$L(\alpha):=[n_2,n_1,n_3,n_4,\ldots,n_{\rk\mathfrak{g}}].$$ If there exists some $i$ such that $$2n_i-1=\sum\limits_{j\in J_i}n_j,\mbox{    }(4)$$ then $$s_{\beta_i}(\rho-\alpha)=\rho-\beta_i+n_i\beta_i-s_{\beta_i}(\sum\limits_{j\in J_i}n_j\beta_j) -\sum\limits_{j\notin J_i\cup\{i\}}n_j\beta_j\atop=\rho+(n_i-1)\beta_i-(\sum\limits_{j\in J_i}n_j)\beta_i-\alpha+n_i\beta_i =\rho-\alpha,$$ since all the roots have the same lenght. Consequently, if $\alpha$ satisfies the equality $(4)$ for some $i$, then $\rho-\alpha$ is not regular. For each case, we will give some $i$ for which $\alpha$ satisfies equality $(4)$. The result will be presented in a table. Its first column gives some values of $i$ and the numbers $j$ wich are on the same line in the second column are such that the root $r(j)$ satisfies equality $(4)$ with respect to $i$. The roots are given by the map $\alpha\mapsto L(\alpha)$.\\
a) We suppose $\mathfrak{g}$ of type $\E_6$. The image of $\mathcal{R}'_+$ by the map $\alpha\mapsto L(\alpha)$ is given by
$$\begin{tabular}{lcc}
r(1):=[1,0,1,2,1,0]&r(2):=[1,1,1,2,1,0]&r(3):=[1,0,1,2,1,1]\\
r(4):=[1,1,2,2,1,0]&r(5):=[1,1,1,2,1,1]&r(6):=[1,0,1,2,2,1]\\
r(7):=[1,1,2,2,1,1]&r(8):=[1,1,1,2,2,1]&r(9):=[1,1,2,2,2,1]\\
r(10):=[1,1,2,3,2,1]&r(11):=[2,1,2,3,2,1].\\
\end{tabular}$$
The corresponding table is given by 
$$\begin{tabular}{lc}
2&11\\
3&4\\
4&1,2,10\\
5&6,8,9\\
6&3,5,7\\
\end{tabular}$$

b) We suppose that $\mathfrak{g}$ has type $\E_7$. Let $\mathcal{R}''_+$ be the subset of elements of $\mathcal{R}'_+$ such that $n_7\neq0$. The image of $\mathcal{R}''_+$ by the map $\alpha\mapsto L(\alpha)$ is given by
$$\begin{tabular}{lcc}
r(1):=[1,0,1,2,1,1,1]&r(2):=[1,1,1,2,1,1,1]&r(3):=[1,0,1,2,2,1,1]\\
r(4):=[1,1,2,2,1,1,1]&r(5):=[1,1,1,2,2,1,1]&r(6):=[1,0,1,2,2,2,1]\\
r(7):=[1,1,2,2,2,1,1]&r(8):=[1,1,1,2,2,2,1]&r(9):=[1,1,2,2,2,2,1]\\
r(10):=[1,1,2,3,2,1,1]&r(11):=[1,1,2,3,2,2,1]&r(12):=[2,1,2,3,2,1,1]\\
r(13):=[1,1,2,3,3,2,1]&r(14):=[2,1,2,3,2,2,1]&r(15):=[2,1,2,3,3,2,1]\\
r(16):=[2,1,2,4,3,2,1]&r(17):=[2,1,3,4,3,2,1]&r(18):=[2,2,3,4,3,2,1].\\
\end{tabular}$$
The corresponding table is given by
$$\begin{tabular}{lc}
1&18\\
3&17\\
4&16\\
5&13,15\\
6&6,8,9,11,14\\
7&1,2,3,4,5,7,10,12\\
\end{tabular}$$

c) We suppose that $\mathfrak{g}$ has type $\E_8$. Let $\mathcal{R}''_+$ be the subset of elements of $\mathcal{R}'_+$ such that $n_8\neq0$. The image of $\mathcal{R}''_+$ by the map $\alpha\mapsto L(\alpha)$ is given by
$$\begin{tabular}{lcc}
r(1):=[1,0,1,2,1,1,1,1]&r(2):=[1,0,1,2,2,1,1,1]&r(3):=[1,1,1,2,1,1,1,1]\\
r(4):=[1,0,1,2,2,2,1,1]&r(5):=[1,1,2,2,1,1,1,1]&r(6):=[1,1,1,2,2,1,1,1]\\
r(7):=[1,1,2,2,2,1,1,1]&r(8):=[1,1,1,2,2,2,1,1]&r(9):=[1,0,1,2,2,2,2,1]\\
r(10):=[1,1,2,3,2,1,1,1]&r(11):=[1,1,2,2,2,2,1,1]&r(12):=[1,1,1,2,2,2,2,1]\\
r(13):=[2,1,2,3,2,1,1,1]&r(14):=[1,1,2,3,2,2,1,1]&r(15):=[1,1,2,2,2,2,2,1]\\
r(16):=[2,1,2,3,2,2,1,1]&r(17):=[1,1,2,3,3,2,1,1]&r(18):=[1,1,2,3,2,2,2,1]\\
r(19):=[2,1,2,3,3,2,1,1]&r(20):=[2,1,2,3,2,2,2,1]&r(21):=[1,1,2,3,3,2,2,1]\\
r(22):=[2,1,2,4,3,2,1,1]&r(23):=[2,1,2,3,3,2,2,1]&r(24):=[1,1,2,3,3,3,2,1]\\
r(25):=[2,1,3,4,3,2,1,1]&r(26):=[2,1,2,4,3,2,2,1]&r(27):=[2,1,2,3,3,3,2,1]\\
r(28):=[2,2,3,4,3,2,1,1]&r(29):=[2,1,3,4,3,2,2,1]&r(30):=[2,1,2,4,3,3,2,1]\\
r(31):=[2,2,3,4,3,2,2,1]&r(32):=[2,1,3,4,3,3,2,1]&r(33):=[2,1,2,4,4,3,2,1]\\
r(34):=[2,2,3,4,3,3,2,1]&r(35):=[2,1,3,4,4,3,2,1]&r(36):=[2,1,3,5,4,3,2,1]\\
r(37):=[2,2,3,4,4,3,2,1]&r(38):=[3,1,3,5,4,3,2,1]&r(39):=[2,2,3,5,4,3,2,1]\\
r(40):=[3,2,3,5,4,3,2,1]&r(41):=[2,2,4,5,4,3,2,1]&r(42):=[3,2,4,5,4,3,2,1]\\
r(43):=[3,2,4,6,4,3,2,1]&r(44):=[3,2,4,6,5,3,2,1]&r(45):=[3,2,4,6,5,4,2,1]\\
r(46):=[3,2,4,6,5,4,3,1]&r(47):=[3,2,4,6,5,4,3,2].\\
\end{tabular}$$
The corresponding table is given by
$$\begin{tabular}{lc}
1&\\
2&38,40\\
3&41,42\\
4&36,39,43\\
5&33,35,37,44\\
6&24,27,30,32,34,45\\
7&9,12,15,18,20,21,23,26,29,31,46\\
8&1..8,10,11,13,14,16,17,19,22,25,28,47\\
\end{tabular}$$

5) $\mathfrak{g}$ has type $\F_4$. For $i=1,\ldots,4$, we denote by $n_i$ the coordinate of $\alpha$ at $\beta_i$ in the basis $\Pi$ and we set $$L(\alpha):=[n_1,n_2,n_3,n_4].$$ We identify $\alpha$ with $L(\alpha)$. If $2n_2+n_4=2n_3-1$, then $$s_{\beta_3}(\rho-\alpha)=\rho-\beta_3-n_1\beta_1-n_2\beta_2-n_4\beta_4-(2n_2+n_4-n_3)\beta_3\atop=\rho-\alpha.$$ If $n_1+n_3=2n_2-1$, then $$s_{\beta_2}(\rho-\alpha)=\rho-\beta_2-n_1\beta_1-n_3\beta_3-n_4\beta_4-(n_1+n_3-n_2)\beta_2\atop=\rho-\alpha.$$ If $n_2=2n_1-1$, then $$s_{\beta_1}(\rho-\alpha)=\rho-\beta_1-n_2\beta_2-n_3\beta_3-n_4\beta_4-(-n_1+n_2)\beta_1\atop=\rho-\alpha.$$ Analogously, $s_{\beta_4}(\rho-\alpha)=\rho-\alpha$ if $n_3=2n_4-1$. The image of $\mathcal{R}'_+$ by the map $\alpha\mapsto L(\alpha)$ is given by
$$\begin{tabular}{lcc}
r(1):=[0,1,2,0]&r(2):=[1,1,2,0]&r(3):=[0,1,2,1]\\
r(4):=[1,2,2,0]&r(5):=[1,1,2,1]&r(6):=[0,1,2,2]\\
r(7):=[1,2,2,1]&r(8):=[1,1,2,2]&r(9):=[1,2,3,1]\\
r(10):=[1,2,2,2]&r(11):=[1,2,3,2]&r(12):=[1,2,4,2]\\
r(13):=[1,3,4,2]&r(14):=[2,3,4,2].\\
\end{tabular}$$
We then deduce the following table
$$\begin{tabular}{lc}
1&2,5,8,14\\
2&4,7,10,13\\
3&3,5,9\\
4&11\\
\end{tabular}$$

From the equalities:$$s_{\beta_3}(\rho-r(1))=\rho-\beta_3-\beta_2$$ $$s_{\beta_4}(\rho-r(6))=\rho-\beta_4-\beta_2-2\beta_3\atop= \rho-r(3)$$ $$s_{\beta_3}(\rho-r(12))=\rho-\beta_3-\beta_1-2(\beta_2+2\beta_3)+4\beta_3-2\beta_3-2\beta_4\atop= \rho-r(11).$$  So for $i=1,6,7$, $\rho-r(i)$ is not regular by which goes before.\\

6)  $\mathfrak{g}$ has type $\G_2$. From the equalities:$$s_{\beta_1}(\rho-(2\beta_1+\beta_2))=\rho-\beta_1+2\beta_1- (\beta_2+3\beta_1)\atop=\rho-(\beta_2+2\beta_1)$$ $$s_{\beta_2}(\rho-(3\beta_1+2\beta_2))=\rho-\beta_2+2\beta_2- 3(\beta_2+\beta_1)\atop=\rho-(3\beta_1+2\beta_2)$$ $$s_{\beta_1}(\rho-(3\beta_1+\beta_2))=\rho-\beta_1+3\beta_1- (\beta_2+3\beta_1)\atop=\rho-(\beta_2+\beta_1).$$ So for any $\alpha$ in $\mathcal{R}'_+$, $\rho-\alpha$ is not regular.\\

(iv) {\it If $\alpha$ is not the biggest root then $\rho+\alpha$ is not regular when $\mathfrak{g}$ has type $\A_{\rk\mathfrak{g}}$, $\E_6$, $\E_7$, $\E_8$.}\\

Let us suppose that $\alpha$ is not the biggest root and that $\mathfrak{g}$ has type $\A_{\rk_\mathfrak{g}}$, $\E_6$, $\E_7$, $\E_8$. If $\beta_{i_1},\ldots,\beta_{i_l}$ are pairwise different elements of $\Pi$ such that $\beta_{i_j}$ is not orthogonal to $\beta_{i_{j+1}}$ for $j=1,\ldots,l-1$, then $$s_{\beta_{i_l}}(\rho+\beta_{i_1}+\cdots+\beta_{i_{l-1}}) =\rho+\beta_{i_1}+\cdots+\beta_{i_{l-1}},$$ since $s_{\beta_{i_l}}(\rho)=\rho-\beta_{i_l}$. Hence $\rho+\alpha$ is not regular if $\alpha$ is the sum of $l<\rk\mathfrak{g}$ pairwise different simple roots. In particular, the statement is proved when $\mathfrak{g}$ has type $\A_{\rk\mathfrak{g}}$. So we can suppose that $\mathfrak{g}$ has type $\E$. We then use the notations of (ii,4). If there exists some $i$ such that $$2n_i+1=\sum\limits_{j\in J_i}n_j,\mbox{   }(5)$$ then $$s_{\beta_i}(\rho+\alpha)=\rho-\beta_i-n_i\beta_i+ s_{\beta_i}(\sum\limits_{j\in J_i}n_j\beta_j) +\sum\limits_{j\notin J_i\cup\{i\}}n_j\beta_j\atop=!
 \rho-(n_i+1)\beta_i+(\sum\limits_{j\in J_i}n_j)\beta_i+\alpha-n_i\beta_i =\rho+\alpha,$$ since all the roots have the same length. Consequently, if $\alpha$ satisfies the equality $(5)$ for some $i$, then $\rho+\alpha$ is not regular. If all the coordinates of $\alpha$ are equal to $1$, then $\alpha$ satisfies equality $(5)$ for $i=4$. Hence $\rho+\alpha$ is not regular in this case and we have only to consider the cases when $\alpha$ belongs to $\mathcal{R}'_+$. For each case, we will give some $i$ for which $\alpha$ satisfies equality $(5)$. The result will be presented in a table. Its first column gives some values of $i$ and the numbers $j$ which are on the same line in the second column are such that the root $r(j)$ satisfies equality $(5)$ with respect $i$. The roots are given by their images by the map $\alpha\mapsto L(\alpha)$. We recall that the biggest root corresponds to $r(i)$ for $i$ the biggest one.\\
1) $\mathfrak{g}$ has type $\E_6$. The corresponding table is given by
$$\begin{tabular}{lc}
1&6\\
2&10\\
3&8\\
4&9\\
5&3,5,7\\
6&1,2,4\\
\end{tabular}$$

2) $\mathfrak{g}$ has type $\E_7$. Let $\mathcal{R}''_+$ be the subset of elements of $\mathcal{R}'_+$ such that $n_7\neq0$. Any element of $\mathcal{R}'_+\backslash\mathcal{R}''_+$ which satisfies equality $(5)$ with respect to $i\leqslant5$ and $\E_6$ satisfy the same equality with respect to $i$ and $\mathfrak{g}$ since $\beta_i$ is orthogonal to $\beta_7$. If it satisfies equality $(5)$ with respect to $6$ and $\E_6$, then it  satisfies the same equality with respect to $i$ and $\mathfrak{g}$ since $n_7=0$. If $\alpha$ is the biggest root of the subsystem generated by $\beta_1,\ldots,\beta_6$, then $n_6=1$. So $s_{\beta_7}(\rho+\alpha)=\rho+\alpha$ since $s_{\beta_7}(\beta_6)=\beta_6+\beta_7$. Hence we have only to consider the elements of $\mathcal{R}''_+$. The corresponding table is given by
$$\begin{tabular}{lc}
1&6,17\\
2&13\\
3&8,16\\
4&9,15\\
5&1,2,4,11,14\\
6&3,5,7,10,12\\
\end{tabular}$$

3) $\mathfrak{g}$ has type $\E_8$. Let $\mathcal{R}''_+$ be the subset of elements of $\mathcal{R}'_+$ such that $n_8\neq0$. Any element of $\mathcal{R}'_+\backslash\mathcal{R}''_+$ which satisfies equality $(5)$ with respect to $i\leqslant6$ and $\E_7$ satisfy the same equality with respect to $i$ and $\mathfrak{g}$ since $\beta_i$ is orthogonal to $\beta_8$. If it satisfies equality $(5)$ with respect to $7$ and $\E_7$, then it  satisfies the same equality with respect to $i$ and $\mathfrak{g}$ since $n_8=0$. If $\alpha$ is the biggest root of the subsystem generated by $\beta_1,\ldots,\beta_7$, then $n_7=1$. So $s_{\beta_8}(\rho+\alpha)=\rho+\alpha$ since $s_{\beta_8}(\beta_7)=\beta_7+\beta_8$. Hence we have only to consider the elements of $\mathcal{R}''_+$. The corresponding table is given by
$$\begin{tabular}{lc}
1&38\\
2&24,36,41\\
3&9,12,33,39,40\\
4&15,27,35,37,42\\
5&1,3,5,18,20,30,32,34,43\\
6&2,6,7,10,13,21,23,26,29,31,44\\
7&4,8,11,14,16,17,19,22,25,28,45\\
8&16\\
\end{tabular}$$

(v) {\it If $\alpha$ is not the biggest root:
\begin{itemize}
\item[(a)] for $\mathfrak{g}$ of type $\B_{\rk\mathfrak{g}}$, $\F_4$, $\G_2$, $\rho+\alpha$ is regular for two values of $\alpha$ and when it isn't dominant there exists a simple root $\beta$ such that $s_\beta(\rho+\alpha)$ is dominant,
\item[(b)] for $\mathfrak{g}$ of type $\C_{\rk\mathfrak{g}}$, $\D_{\rk\mathfrak{g}}$, $\rho+\alpha$ is regular for one value of $\alpha$ and for this value it is dominant.
\end{itemize}}

(a) 1)  For $\mathfrak{g}$ of type $\B_{\rk\mathfrak{g}}$, let be $\alpha_1$ be the sum of simple roots. For any positive root $\gamma$, $\<\alpha_1,\gamma\>$ is nonnegative, then $\alpha_1$ is dominant. Hence $\rho+\alpha_1$ is regular dominant. Let be $\alpha_2$ be the sum of the simple roots of biggest length. Then $\alpha_2=\beta_1+\ldots+\beta_{\rk\mathfrak{g}-1}$. Since $s_{\beta_{\rk\mathfrak{g}}}(\rho+\alpha_2)=\rho+\alpha_1$, $\rho+\alpha_2$ is regular.
It remains to show that $\rho+\alpha$ is not regular for the other cases , i.e there exists a simple root $\beta$ such that $s_\beta(\rho+\alpha)=\rho+\alpha$.\\The possible values of $\alpha$ are
$$\begin{aligned}
\alpha_{1,i}=\beta_i+\cdots+\beta_{\rk\mathfrak{g}}, & 1<i\leqslant\rk\mathfrak{g}\\ 
\alpha_{2,i,j}=\beta_i+\cdots+\beta_{j-1}, & 1\leqslant i<j<\rk\mathfrak{g}\mbox{ or }1< i<j\leqslant\rk\mathfrak{g}\\ \alpha_{3,i,j}=\beta_i +\cdots+\beta_{j-1}+2(\beta_j+\cdots+\beta_{\rk\mathfrak{g}}), & 1< i<j\leqslant\rk\mathfrak{g}\\ \alpha_{4,j}=\beta_1+\cdots+\beta_j+2(\beta_{j+1}+\cdots+\beta_{\rk\mathfrak{g}}), &  1 <j<\rk\mathfrak{g}.
\end{aligned}$$
If $\alpha$ is equal to $\alpha_{1,i}$ or $\alpha_{3,i,j}$, $$s_{\beta_{i-1}}(\rho+\alpha)=\rho+\alpha,$$ if $\alpha$ is equal to $\alpha_{2,i,j}$, for $1\leqslant i<j<\rk\mathfrak{g}$, $$s_{\beta_j}(\rho+\alpha)=\rho+\alpha,$$ for $1<i<j\leqslant\rk\mathfrak{g}$, $$s_{\beta_{i-1}}(\rho+\alpha)=\rho+\alpha,$$ if $\alpha$ is equal to $\alpha_{4,j}$, $$s_{\beta_j}(\rho+\alpha)=\rho+\alpha.$$\\
2) For $\mathfrak{g}$ of type $\F_4$, let be $\alpha_1=\beta_1+2\beta_2+3\beta_3+2\beta_4$. For any positive root $\gamma$, $\<\alpha_1,\gamma\>$ is nonnegative, then $\alpha_1$ is dominant. Hence $\rho+\alpha_1$ is regular dominant. Let be $\alpha_2 =\beta_1+2(\beta_2+\beta_3+\beta_4)$. Since $s_{\beta_3}(\rho+\alpha_2)=\rho+\alpha_1$, $\rho+\alpha_2$ is regular.\\
For $i=1,2,3,4$, we denote $n_i$ the coordinate of $\alpha$ at $\beta_i$ in the basis $\Pi$ and we set $$L(\alpha):=[n_1,n_2,n_3,n_4].$$ Let $\mathcal{R}'_+$ be the set of $\alpha$ in $\mathcal{R}_+$ such that $\alpha$ is not the biggest root and it is not in $\{\alpha_1,\alpha_2\}$. The image of $\mathcal{R}'_+$ by the map $\alpha\mapsto L(\alpha)$ is given by $$\begin{tabular}{lcc}
r(1):=[1,0,0,0]&r(2):=[0,1,0,0]&r(3):=[0,0,1,0]\\
r(4):=[0,0,0,1]&r(5):=[1,1,0,0]&r(6):=[0,1,1,0]\\
r(7):=[0,0,1,1]&r(8):=[1,1,1,0]&r(9):=[0,1,1,1]\\
r(10):=[1,1,1,1]&r(11):=[0,1,2,0]&r(12):=[1,1,2,0]\\
r(13):=[0,1,2,1]&r(14):=[1,2,2,0]&r(15):=[1,1,2,1]\\
r(16):=[0,1,2,2]&r(17):=[1,2,2,1]&r(18):=[1,1,2,2]\\
r(19):=[1,2,3,1]&r(20):=[1,2,4,2]&r(21):=[1,3,4,2].
\end{tabular}$$ 
We identify $\alpha$ with $L(\alpha)$. In the following table, the value $i$ in the first column and the values $j$ on the same line verify the equality $s_{\beta_i}(\rho+r(j))=\rho+r(j)$,
$$\begin{tabular}{lc}
1&2,6,9,11,13,16,21\\
2&1,7,12,15,18,20\\
3&4,10,17\\
4&3,8,19.
\end{tabular}$$
Since $$s_{\beta_3}(\rho+r(5))=\rho+r(8)\atop s_{\beta_4}(\rho+r(14)=\rho+r(17),$$ for $i=5,14$, $\rho+r(i)$ is not regular by which goes before.\\
 
3) For $\mathfrak{g}$ of type $\G_2$, let be $\alpha_1=2\beta_1+\beta_2$. For any positive root $\gamma$, $\<\alpha_1,\gamma\>$ is nonnegative, then $\alpha_1$ is dominant. Hence $\rho+\alpha_1$ is regular dominant. Since $s_{\beta_1}(\rho+\beta_2)=\rho+\alpha_1$, $\rho+\beta_2$ is regular.\\
For $\alpha$ not in $\{\alpha_1,\beta_2\}$, $\alpha$ is in $\{\beta_1, \beta_1+\beta_2, 3\beta_1+\beta_2\}$. Since 
$$\begin{aligned}
s_{\beta_2}(\rho+\beta_1)&=\rho+\beta_1,\\
s_{\beta_1}(\rho+\beta_1+\beta_2)&=\rho+\beta_1+\beta_2,\\
s_{\beta_2}(\rho+3\beta_1+\beta_2)&=\rho+3\beta_1+\beta_2,
\end{aligned}$$ $\rho+\alpha$ isn't regular.

(b) 1) $\mathfrak{g}$ has type $\C_{\rk\mathfrak{g}}$, let be $\alpha=\beta_1+2(\beta_2+\cdots+\beta_{\rk\mathfrak{g}-1})+ \beta_{\rk\mathfrak{g}}$. For any positive root $\gamma$, $\<\alpha_1,\gamma\>$ is nonnegative, then $\alpha_1$ is dominant. Hence $\rho+\alpha_1$ is regular dominant. The other possible values of $\alpha$ are
$$\begin{aligned}
\alpha_{1,i,j}=\beta_i+\cdots+\beta_{j-1}, & 1\leqslant i<j\leqslant\rk\mathfrak{g}\\
\alpha_{2,i,j}=\beta_i+\cdots+\beta_{j-1}+2(\beta_j+\cdots+\beta_{\rk\mathfrak{g}-1})+\beta_{\rk\mathfrak{g}}, & 1<i<j\leqslant\rk\mathfrak{g}\\
\alpha_{3,j}=\beta_1+\cdots+\beta_j+2(\beta_{j+1} +\cdots+\beta_{\rk\mathfrak{g}-1})+ \beta_{\rk\mathfrak{g}}, & 1<j\leqslant\rk\mathfrak{g}-1\\
\alpha_{4,i} =2(\beta_i+\cdots+\beta_{\rk\mathfrak{g}-1})+\beta_{\rk\mathfrak{g}}, & 1<i\leqslant\rk\mathfrak{g}.
\end{aligned}$$
If $\alpha$ is equal to $\alpha_{1,i,j}$, $$s_{\beta_j}(\rho+\alpha)=\rho+\alpha,$$  if $\alpha$ is equal to $\alpha_{2,i,j}$, $$s_{\beta_{i-1}}(\rho+\alpha)=\rho+\alpha,$$ if $\alpha$ is equal to $\alpha_{3,j}$, $$s_{\beta_j}(\rho+\alpha)=\rho+\alpha,$$ if $\alpha$ is equal to $\alpha_{4,i}$, $$s_{\beta_{i-1}}(\rho+\alpha)=\rho+\alpha+\beta_{i-1}.$$In this case $\rho+\alpha$ is not regular since $\rho+\alpha+\beta_{i-1}=\rho+\alpha_{2,i-1,i}$. \\

2) $\mathfrak{g}$ has type $\D_{\rk\mathfrak{g}}$, let be $\alpha_1=\beta_1+2(\beta_2+\cdots+\beta_{\rk\mathfrak{g}-2})+ \beta_{\rk\mathfrak{g}-1}+\beta_{\rk\mathfrak{g}}$. For any positive root $\gamma$, $\<\alpha_1,\gamma\>$ is nonnegative, then $\alpha_1$ is dominant. Hence $\rho+\alpha_1$ is regular dominant. The other possible values of $\alpha$ are
$$\begin{aligned}
\alpha_{1,i,j}=\beta_i+\cdots+\beta_{j-1}, & 1\leqslant i<j\leqslant\rk\mathfrak{g}\\ \alpha_{2,i}=\beta_i+\cdots+\beta_{\rk\mathfrak{g}}, & 1\leqslant i<\rk\mathfrak{g}\\ \alpha_{3,i,j}=\beta_i+\cdots+\beta_{j-1}+2(\beta_j+\cdots+\beta_{\rk\mathfrak{g}-2})+ \beta_{\rk\mathfrak{g}-1}+ \beta_{\rk\mathfrak{g}}, & 1< i<j<\rk\mathfrak{g}-1\\
\alpha_{4,j}=\beta_1+\cdots+\beta_j+ 2(\beta_{j+1}+\cdots+\beta_{\rk\mathfrak{g}-2})+ \beta_{\rk\mathfrak{g}-1}+ \beta_{\rk\mathfrak{g}}, & 1<j<\rk\mathfrak{g}-2.
\end{aligned}$$
If $\alpha$ is equal to $\alpha_{1,i,j}$ for $i\notin\{1,\rk\mathfrak{g}-2,\rk\mathfrak{g}-1\}$, $$s_{\beta_{i-1}}(\rho+\alpha)=\rho+\alpha$$ for $i=1$ and $j\notin\{\rk\mathfrak{g}-2,\rk\mathfrak{g}-1,\rk\mathfrak{g}\}$, $$s_{\beta_j}(\rho+\alpha)=\rho+\alpha$$ for $i=1$ and $j=\rk\mathfrak{g}-2$, $$s_{\beta_{\rk\mathfrak{g}-2}}(\rho+\alpha)=\rho+\alpha$$ for $i=1$ and $j\in\{\rk\mathfrak{g}-1,\rk\mathfrak{g}\}$, $$s_{\beta_{\rk\mathfrak{g}}}(\rho+\alpha)=\rho+\alpha$$ for $i=\rk\mathfrak{g}-2$, $\alpha=\beta_{\rk\mathfrak{g}-2}+\beta_{\rk\mathfrak{g}-1}$ and $$s_{\beta_{\rk\mathfrak{g}}}(\rho+\alpha)=\rho+\alpha,$$ for $i=\rk\mathfrak{g}-1$, $\alpha=\beta_{\rk\mathfrak{g}-1}$ and $$s_{\beta_{\rk\mathfrak{g}-2}}(\rho+\alpha)=\rho+\alpha,$$  if $\alpha$ is equal to $\alpha_{3,i,j}$, $$s_{\beta_{i-1}}(\rho+\alpha)=\rho+\alpha,$$ if $\alpha$ is equal to $\alpha_{2,i}$ with $i=1$, $$s_{\beta_{\rk\mathfrak{g}-2}}(\rho+\alpha)=\rho+\alpha,$$ for $i\notin\{1,\rk\mathfrak{g}-1\}$, !
 $$s_{\beta_{i-1}}(\rho+\alpha)=\rho+\alpha,$$ for $i=\rk\mathfrak{g}-1$, $$s_{\beta_{\rk\mathfrak{g}-2}}(\rho+\alpha)=\rho+\beta_{\rk\mathfrak{g}-2}+ \beta_{\rk\mathfrak{g}-1}+\beta_{\rk\mathfrak{g}}$$ and since $\beta_{\rk\mathfrak{g}-2}+ \beta_{\rk\mathfrak{g}-1}+\beta_{\rk\mathfrak{g}}$ is equal to $\alpha_{2,i}$ with $i=\rk\mathfrak{g}-2$, $\rho+\beta_{\rk\mathfrak{g}-2}+ \beta_{\rk\mathfrak{g}-1}+\beta_{\rk\mathfrak{g}}$ is not regular,\\ if $\alpha$ is equal to $\alpha_{4,j}$, $$ s_{\beta_j}(\rho+\alpha)=\rho+\alpha.$$
\end{proof}

\end{document}